\documentclass[11pt,tbtags,reqno]{article}
\usepackage[margin=0.94in]{geometry}
\usepackage{eulervm} 
\usepackage{amssymb}
\usepackage{amsmath}
\usepackage{amsthm}
\usepackage[swedish, english]{babel}
\usepackage{psfrag}   
\usepackage{graphicx}
\usepackage[retainorgcmds]{IEEEtrantools}
\usepackage{bbm}
\usepackage[square, comma, numbers, sort&compress]{natbib}
\usepackage{enumerate}
\usepackage{soul}
\usepackage{hyperref}
\usepackage[colorinlistoftodos, textsize=tiny, backgroundcolor=yellow
, linecolor=green]{todonotes}
\usepackage{sidecap}
\usepackage{float}
\usepackage{leftidx}

\usepackage{bm}

\newcommand{\E}{\mathbb E}
\newcommand{\R}{\mathbb R}
\newcommand{\N}{\mathbb N}

\newcommand{\F}{\mathcal F}

\newcommand{\G}{\mathcal G}

\newcommand{\eps}{\varepsilon}

\newcommand{\Ll}{\mathbb{L}}
\newcommand{\ud}{\mathrm{d}}

\def\P{{\mathbb P}}
\def\Q{{\mathbb Q}}

\numberwithin{equation}{section}
\theoremstyle{plain}

\newtheorem{theorem}{Theorem}[section]
\newtheorem{lemma}[theorem]{Lemma}
\newtheorem{corollary}[theorem]{Corollary}
\newtheorem{proposition}[theorem]{Proposition}

\newtheorem{remark}[theorem]{Remark}

\theoremstyle{definition}
 \theoremstyle{example}
  \theoremstyle{open question}

\begin{document}

\title{\textsc{
Necessary and Sufficient Conditions  for the Lacunary/Hereditary   Laws of Large Numbers} \thanks{~ The authors are grateful to Tomoyuki Ichiba and Nicola Doninelli for pointing out crucial results in this field.}
}

\author{  
\textsc{Istv\'an Berkes} \thanks{~
Institute of   Statistics,  Graz University of Technology, Kopernikusgasse 24, 8010 Graz, Austria  ({\it berkes@tugraz.at}) and Alfred R\'enyi Institute of Mathematics, Re\'altanoda utca 13-15, 1053 Budapest, Hungary ({\it berkes.istvan@renyi.hu}).
}  
 \and
\textsc{Ioannis Karatzas} \thanks{~
Departments of Mathematics and Statistics,  Columbia University, New York, NY 10027 ({\it ik1@columbia.edu}). Support from  the NSF     under Grant  DMS-25-06199, and from a Lenfest Award at  Columbia, is gratefully acknowledged.  
}  
 \and
\textsc{Walter Schachermayer}                \thanks{ \, 
Faculty of Mathematics, University of Vienna, Oskar-Morgenstern-Platz 1, 1090 Vienna,  Austria ({\it walter.schachermayer@univie.ac.at}). Support from the  Austrian Science Fund (FWF) under
 grants P-35519 and P-35197 is gratefully acknowledged.    
          }
                                      }

\maketitle

\begin{abstract}

\medskip
\noindent
The   celebrated 
theorem of \textsc{Koml\'os} \cite{K} asserts that $\Ll^1-$boundedness is  sufficient  for a given sequence of functions to contain a subsequence along which (in a ``lacunary" manner),  and along whose     every further subsequence (``hereditarily"),    a strong law of large numbers holds. We identify  here slightly weaker, \textsc{Egorov}-type conditions, as  not only sufficient in this context, but   necessary as well. Necessary and sufficient  conditions  are developed  also     for the lacunary/hereditary version  of the weak law of large numbers  for general sequences, as well as      for the weak law of large numbers   in the context of exchangeable sequences, both long-open questions. 
      \end{abstract}

\noindent
{\small {\sl AMS  2020 Subject Classification:}  Primary 60A10, 60F15; Secondary 60G57, 60G09, 60G42.}

\noindent
 {\small {\sl Keywords:}  Subsequences, hereditary convergence,  stable convergence, exchangeability, Koml\'os   theorem,   laws  of large numbers}


\section{Introduction}
\label{sec1}


The lacunary/hereditary strong law of large numbers established in the seminal paper of  \textsc{Koml\'os} \cite{K}, singles out   the boundedness-in-$\Ll^1$  condition 
\begin{equation} 
\label{1}
 \sup_{n \in \N} \,\E^\P \big( \big| f_n \big| \big) < \infty\,,
 \end{equation}
  pertaining to  a given sequence of measurable functions $\,f_1, f_2, \cdots\,$ on a probability space $(\Omega, \F, \P),$ as   sufficient for the existence of a measurable   $\,f_* : \Omega \to \R\,$ and of a subsequence $f_{k_1}, f_{k_2},  \cdots\,$    satisfying  the  convergence in \textsc{Ces\`aro} mean
\begin{equation} 
\label{2}
 \lim_{N \to \infty} \frac{1}{\,N\,} \sum_{n=1}^N f_{k_n}   = f_{*}     
\end{equation}
 $\P-$a.e.; as well as hereditarily, that is, along all further subsequences. The limiting function $f_*$ plays here the r\^ole of ``randomized mean", whose $\,\P-$integrability the condition \eqref{1} also guarantees.

 \bigskip
 \newpage
This is a striking,  and  very useful, result;   its proof  was  a major early success of martingale methods in classical probability limit theory. It extends the sufficiency statements, not only of the  \textsc{Kolmogorov}\,(\cite{K2},\,\cite{Kol}) strong law of large numbers, but also of the \textsc{Dunford\,-Pettis} (\cite{DP},\,\cite{DunSchs}) theorem in functional analysis. It has found numerous applications in that field  (e.g.,\,\cite{DS1},\,\cite{KK},\,\cite{LZ},\,\cite{Z}), as well as in stochastic analysis (e.g.,\,\cite{DS2},\,\cite{BSV1},\,\cite{BSV2},\,\cite{J}). It inspired the development of sufficient conditions for the lacunary/hereditary versions of the law of the iterated logarithm and of the central limit theorem, leading to   a heuristic ``principle of subsequences" (\cite{Ch1},\,\cite{G},\,\cite{Ch5}).

  Now it is well-known that, when the $f_1, f_2, \cdots$ are  independent and identically distributed (I.I.D.), the integrability condition  $\E \big( \big| f_1  \big| \big) < \infty$ is not only sufficient for the validity of the classical strong law of large numbers, but also necessary (e.g.\,\cite{Loe}, p.\,251). Likewise,   uniform integrability in the \textsc{Dunford\,-Pettis} theorem is not just    sufficient   for relative compactness in the space $\Ll^1$ equipped with the $\sigma (\Ll^1, \Ll^\infty)$ topology, but      necessary as well   (Theorem T23, page 20 in \cite{Mey}).  

  \smallskip
We identify here a condition,  slightly weaker than 
\eqref{1}, as  not only sufficient    for  the validity of the lacunary/hereditary  Strong Law of Large Numbers \eqref{2} with some measurable   $f_* : \Omega \to \R\,,$  but also necessary. Developed in Theorem   \ref{Thm2.1}, this  condition   posits the existence of sets  $\, A_1 \subseteq A_2 \subseteq \cdots \subseteq A_j \subseteq \cdots \,$ in $\,\F\,$ with $\lim_{j \to \infty} \P (A_j) =1,$  and  of a   subsequence   $\,f_{k_1},  f_{k_2}, \cdots\,, $  such that 
\begin{equation} 
\label{A1}
\Big( f_{k_n}  \, \mathbf{1}_{A_{j}} \Big)_{n \in \N}~~~\text{is bounded in} ~ \,\Ll^1 (\P)\,, ~\text{for each} ~~ j \in \N\,.
\end{equation} 
This   is equivalent to  the existence of a probability measure $\,\Q \sim \P\,,$ under which   a   subsequence $\,f_{k_1}, f_{k_2}, \cdots\,$ is bounded in $\, \Ll^1 (\Q)\,;$ whereas, the slightly stronger condition \eqref{A16} of Proposition  \ref{Prop2.4} is   necessary and sufficient for  the \textsc{Ces\`aro} limit  $f_*$ 
to be  $\,\P-$integrable. 

Similarly,    Theorem   \ref{Thm3.2}  develops  necessary and sufficient conditions       for the lacunary/hereditary version of the  \textsc{Kolmogorov-Feller} (\cite{K1},\,\cite{F})  {\it Weak Law of Large Numbers} in the context of general sequences, a pivotal result of the paper.   Its proof turns out to be considerably more involved  than the one for the Strong Law of Large Numbers. 

The arguments  rest   on three pillars: $(i)$   the notion and properties of stable convergence (cf.\,\cite{Ren}); \\ $(ii)$   \textsc{Aldous}'s profound subsequence theory in\,\cite{A}, using    exchangeability;  
and $(iii)$    {\it  conditions    necessary and sufficient for   the Weak Law of Large Numbers in the context of exchangeable sequences.} 

These  latter conditions settle another long-open question.   It is quite striking  that, the necessary and sufficient conditions for the exchangeable case,  are {\it not} the corresponding classical conditions for the I.I.D.\,\,case  conditioned on the tail $\sigma$-algebra of the sequence.

\section{Sufficiency and Necessity in the  Lacunary/Hereditary SLLN}
\label{sec2}

  Here is a reformulation of     \textsc{Koml\'os}'s  result \cite{K} in terms of conditions both sufficient and necessary. It is proved in section \ref{sec4}; a   result of   independent interest, crucial for the proof,  is   in section \ref{sec5}.

  \begin{theorem} {\bf Sufficiency/Necessity in the  Lacunary/Hereditary SLLN for General Sequences:}
  \label{Thm2.1}
  On a  probability space $\,(\Omega, \F, \P),$  consider  real-valued, measurable functions $f_1, f_2, \cdots$. 
  
  \smallskip
  \noindent
  {\bf (i)} Suppose that for some subsequence $\,f_{k_1}, f_{k_2}, \cdots\,$    and   sets $\, A_1 \subseteq A_2 \subseteq \cdots \subseteq A_j \subseteq \cdots \,$ in $\,\F$ with $\,\lim_{j \to \infty} \P (A_j ) =1\,,$ the condition \eqref{A1} holds.  

There exist then a measurable function $f_* : \Omega \to \R$ and a $($further, relabelled$)$ subsequence along which, and along whose every subsequence, the \textsc{Ces\`aro}-mean convergence in \eqref{2} holds   $\, \P-$a.e.

   \smallskip
 \noindent
  {\bf (ii)} Conversely, suppose that along some subsequence $f_{k_1}, f_{k_2}, \cdots\,$   and  each of its subsequences, the \textsc{Ces\`aro}-mean convergence \eqref{2} holds $\, \P-$a.e.\,for some measurable $f_* : \Omega \to \R\,.$   There exist then    sets $\, A_1 \subseteq A_2 \subseteq \cdots \subseteq A_j \subseteq \cdots \,$ in $\,\F$ with $\,\lim_{j \to \infty} \P (A_j ) =1\,,$  such that \eqref{A1} is satisfied.
  
  \medskip
\noindent
  {\bf (iii)} The validity of \eqref{A1} for some subsequence $f_{k_1}, f_{k_2}, \cdots\,$   and for  sets $\, A_1 \subseteq A_2 \subseteq \cdots \subseteq A_j \subseteq \cdots \,$ in $\,\F$ with $\,\lim_{j \to \infty} \P (A_j ) =1\,,$ is equivalent to the existence of a probability measure $\,\Q \sim \P $ on $(\Omega, \F)\,$ with the property  that some      subsequence $\,f_{k_1}, f_{k_2}, \cdots\,$   is bounded in $\, \Ll^1 (\Q)\,.$

    \end{theorem}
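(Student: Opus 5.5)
I would prove (iii) first, since it reduces (i) to Koml\'os's theorem. Suppose \eqref{A1} holds with $C_j := \sup_n \E^\P( |f_{k_n}| \mathbf{1}_{A_j}) < \infty$. Put $B_1 = A_1$ and $B_j = A_j \setminus A_{j-1}$, and enlarge $B_1$ by the null set $\Omega\setminus\bigcup_j A_j$; this does not affect integrals. Define
\[
\frac{d\Q}{d\P} = c \sum_{j} 2^{-j} (1+C_j)^{-1} \mathbf{1}_{B_j} ,
\]
where $c$ normalizes. The density is strictly positive $\P$-a.e., so $\Q\sim\P$. Moreover
\[
\E^\Q |f_{k_n}| \le c\sum_j 2^{-j}(1+C_j)^{-1}\E^\P\bigl(|f_{k_n}|\mathbf{1}_{A_j}\bigr) \le c\sum_j 2^{-j}<\infty ,
\]
uniformly in $n$. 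Conversely, if $\E^\Q|f_{k_n}|\le C$ and $Z=d\Q/d\P>0$, set $A_j=\{Z\ge 1/j\}$. These sets increase to a set of full measure, and
\[
\E^\P\bigl(|f_{k_n}|\mathbf{1}_{A_j}\bigr)\le j\,\E^\Q|f_{k_n}|\le jC .
\]
This gives (iii).

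For (i), use (iii) to pass to $\Q\sim\P$ under which $(f_{k_n})$ is bounded in $\Ll^1(\Q)$. Koml\'os's theorem \cite{K} applied on $(\Omega,\F,\Q)$ yields a further subsequence and a $\Q$-integrable $f_*$ with hereditary Ces\`aro convergence $\Q$-a.e. Because $\Q\sim\P$, the convergence also holds $\P$-a.e., and $f_*$ is real-valued.

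Part (ii) is the substantive direction. Assume hereditary Ces\`aro convergence along $(f_{k_n})$. The first step is to show that the sequence is tight, i.e.\ bounded in probability. If it were not, the idea is to find a subsequence along which $|f_{k_n}|$ exceeds rapidly growing levels $M_n$ on sets of non-negligible probability. Using a Borel--Cantelli or Aldous-type extraction, one then selects a further subsequence in which a single huge term dominates the Ces\`aro average infinitely often on a set of positive measure. This contradicts convergence, since hereditary convergence along a sequence implies $f_{k_n}/n\to0$ a.e. along every subsequence.

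Tightness alone does not give \eqref{A1}, so the real aim is to build $\Q$ and apply (iii). I would invoke the result of section \ref{sec5}, which I expect to be a subsequence principle. Via Aldous's theorem, pass to a subsequence that is ``almost exchangeable'': it converges stably to a random measure $\mu(\omega,\cdot)$ and can be approximated by a conditionally i.i.d.\ sequence with directing measure $\mu$. The hereditary SLLN should pass to that approximating sequence. Conditionally on the tail $\sigma$-algebra, the sequence is i.i.d., so the classical necessity of integrability for the SLLN forces $\int|x|\,\mu(\omega,dx)<\infty$ for $\P$-a.e.\ $\omega$. Then
\[
A_j:=\Bigl\{\int|x|\,\mu(\cdot,dx)\le j\Bigr\}
\]
increase to full measure. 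Stable convergence together with the approximation suggests $\E^\P(|f_{k_n}|\mathbf{1}_{A_j})\to\E^\P\bigl(\mathbf{1}_{A_j}\int|x|\,\mu(dx)\bigr)\le j$. In fact it is only a $\limsup$ bound, and truncation must be handled carefully.

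The main obstacle is exactly this step. Stable convergence controls only bounded functionals, whereas $|x|$ is unbounded, so $\E(|f_{k_n}|\mathbf{1}_{A_j})$ may fail to be controlled by the limit $\mu$. I would handle it by a diagonal extraction. Choose a subsequence so that the contribution of $\{|f_{k_n}|>m_n\}$ on $A_j$ is summable. If it is not, the large values are sparse enough that, after passing to a further subsequence, they produce the a.e.\ divergence of Ces\`aro means described above; this contradiction argument is the technical heart. Finally, shrink each $A_j$ slightly so that its probability still tends to $1$ and the bound holds for all $n$, not just eventually. Finitely many real-valued terms can be absorbed by intersecting with $\{|f_{k_n}|\le K\}$ for those $n$, and $K$ chosen large.
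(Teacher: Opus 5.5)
Your proofs of (iii) and (i) are sound. Deriving (i) by running Koml\'os under the equivalent measure $\Q$ is a tidier route than the paper's diagonalization over the sets $A_j$. In (ii), your tightness step and the conclusion $\kappa(\omega)=\int|x|\,\mu(\omega,\ud x)<\infty$ a.e.\ (via the exchangeable approximation and Lo\`eve's converse) agree with the paper.

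The gap is the final step, the one you call the technical heart. The dichotomy ``either the contribution of $\{|f_{k_n}|>m_n\}$ on $A_j$ is summable, or the large values force a.e.\ divergence of the Ces\`aro means'' is false. Take $f_n=n^2\,\mathbf{1}_{[0,1/n]}$ on $[0,1]$ with Lebesgue measure. For each $x>0$ only finitely many $f_{k_n}(x)$ are nonzero, so Ces\`aro convergence to $0$ holds a.e.\ along every subsequence. The stable limit is $\mu=\delta_0$, so your sets are $A_j=\{\kappa\le j\}=\Omega$. Yet $\E(f_{k_n}\mathbf{1}_{A_j})=k_n\to\infty$. Since $\E(f_{k_n}\wedge m_n)+\E\big(f_{k_n}\mathbf{1}_{\{f_{k_n}>m_n\}}\big)\ge k_n$, no choice of truncation levels keeps both pieces bounded, let alone makes the tail pieces summable. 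So there is no contradiction with the SLLN to be had: \eqref{A1} really fails for $A_j=\{\kappa\le j\}$. The sets must be shrunk by removing a small set on which \emph{infinitely many} $f_{k_n}$ are large, not just intersected with $\{|f_{k_n}|\le K\}$ for finitely many $n$.

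The missing idea is a Borel--Cantelli excision, which works with your ingredients:
\begin{itemize}
\item By tightness, pick $m_i\uparrow\infty$ with $\sup_n\P(|f_{k_n}|>m_i)\le 2^{-i}$.
\item Since $|x|\wedge m_i$ is bounded and continuous, stable convergence gives $\lim_n\E\big((|f_{k_n}|\wedge m_i)\mathbf{1}_{\{\kappa\le j\}}\big)\le j$. So you can choose $n_1<n_2<\cdots$ with $\E\big((|f_{k_{n_i}}|\wedge m_i)\mathbf{1}_{\{\kappa\le j\}}\big)\le j+1$ for all $j\le i$.
\item The sets $E_N:=\bigcap_{i\ge N}\{|f_{k_{n_i}}|\le m_i\}$ satisfy $\P(E_N)\ge 1-2^{1-N}$. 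For $i\ge N\ge j$ one has $\E\big(|f_{k_{n_i}}|\mathbf{1}_{\{\kappa\le j\}\cap E_N}\big)\le j+1$.
\item Intersect with $\{\max_{i<N}|f_{k_{n_i}}|\le K\}$, let $N=N_j$ and $K=K_j$ grow, and take finite unions to restore monotonicity. This yields \eqref{A1} along $(f_{k_{n_i}})_i$.
\end{itemize}

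The paper performs the same excision differently. It couples $f_{k_n}\mathbf{1}_B$, with $B=\{\kappa\le K\}$, to identically distributed $\xi_n$ of first absolute moment at most $K$, so that $f_{k_n}\mathbf{1}_B-\xi_n\to 0$ a.e.\ (a Skorokhod/Strassen--Dudley construction). It then applies Egorov to get uniform closeness, hence $\sup_n\E(|f_{k_n}|\mathbf{1}_A)<K+1$, on a slightly smaller set $A$. Either way, the last step is excising a small set, not a contradiction argument.
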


     \begin{remark} 
  {\rm
It makes good sense, that probability measures equivalent to $\,\P\,$ should enter the picture here,  as they do in part (iii) above: the property in question, almost-everywhere convergence to a real-valued limit,  is invariant under equivalent changes of probability measure. 

In the   special  context  of a necessary condition for the \textsc{Koml\'os} theorem with nonnegative functions, this feature was pointed out and established by  \textsc{von\,Weizs\"acker}  \cite{vW}.
  }
    \end{remark}

\subsection{Stable Convergence}
\label{sec2.8}
    
We shall endow the space $\,\Ll^0 \equiv \Ll^0 (\P)\,$ of real-valued, measurable functions on a   probability space  $\,(\Omega, \F, \P)$,   with the topology induced by the metric
  $\,
  \Ll^0 \times \Ll^0 \ni \big( f, g \big)  \, \longmapsto \, \E^\P \big( 1 \wedge | f-g| \big) \in [0,1]\,.
\,$
Convergence under this metric is equivalent to convergence in $\P-$measure (``in probability").   The space $\,\Ll^0\,$ and its topology depend only on the equivalence class,  modulo null sets,  to which   $\P$ belongs: they are the same for any   probability measure  $\,\Q \sim \P\, $ on $(\Omega, \F)\,.$ 

We call a set $\, \mathbb{G} \subset \Ll^0\,$ {\it bounded in probability}, if $\, \lim_{M \to \infty} \sup_{g \in \mathbb{G}} \P \,\big( |g| > M \big) =0\,$ holds.

      Let us consider  now  real-valued, measurable    $\,f_1, f_2, \cdots\,$  on  the probability space $\,(\Omega, \F, \P),$ along with   their tail $\sigma-$algebra 
     \begin{equation} 
\label{A6a}
 {\cal T} \, : = \bigcap_{n \in \N}  {\cal T}_n\,, \qquad {\cal T}_n := {\bm \sigma} \big(  f_{n},  f_{n+1} , \cdots \big)\,.
\end{equation}
Here is a basic result, which  goes  back to \textsc{R\'enyi} \cite{Ren}: {\it Every  bounded in probability sequence $\,f_1, f_2, \cdots$  contains                                                a    subsequence  $f_{k_1}, f_{k_2}, \cdots$ which is ``determining"}, i.e.,   satisfies the   {\it  stable convergence}  (or  extended  \textsc{Helly-Bray} property, proved in  \cite{BC}; see also\,\cite{AE},\,and\,Theorem 2.2 in\,\cite{BerRos})   
  \begin{equation} 
\label{B0}
\lim_{n \to \infty} \P \big( f_{k_n}  \le x, B \big)   = \int_B {\bm H} (x, \omega) \, \P (\mathrm{d}\, \omega) \,, \qquad \forall ~ ~B \in \F 
\end{equation} 
 at each point $x  $ in a countable,   dense  set ${\bm D} \subset \R\, . $ Here $\, \R \times \Omega \ni (x, \omega)\longmapsto {\bm H}  (x, \omega) \in [0,1]\, $ is a  limit- \\ ing random probability distribution function;  for $\,\P-$a.e.\,$\,\omega \in \Omega\,,$ it induces on ${\cal B} (\R)$ the probability measure   ${\bm \mu}_\omega  \equiv {\bm \mu} (\omega)\,,$  where $\omega \mapsto   {\bm \mu}_\omega$ is  measurable with respect to the ``tail $\sigma-$algebra" of \eqref{A6a}.

\subsection{Integrability of the Limit in Theorem \ref{Thm2.1} }
\label{sec2.4}

The  following result addresses the $\P-$integrability of   the \textsc{Ces\`aro} limit   (``randomized mean") $f_*\,$ in Theorem \ref{Thm2.1}.  Its  first  claim can be argued  as in  \S\,6.1.4 of \cite{BKS}; the second as in section \ref{sec4} here.  

 \begin{proposition} 
 \label{Prop2.4} On a   probability space $\,(\Omega, \F, \P)$  consider   real-valued, measurable functions $\,f_1, f_2, \cdots$. \\ 
{\bf (i)} Suppose that  the sequence $\,f_1, f_2, \cdots$ is bounded in $\,\Ll^0$ with ``determining" subsequence  $f_{k_1}, f_{k_2}, \cdots\,,$ whose limiting random distribution $\,\omega \mapsto   {\bm \mu}_\omega\,,$     as in and below  \eqref{B0}, satisfies 
  \begin{equation} 
\label{A15}
 \E^\P \int_{\R} \big| x \big|  \,  {\bm \mu}  (\ud x, \cdot)  \,<\,\infty\,.
  \end{equation}
  
Then  the \textsc{Ces\`aro}  convergence \eqref{2} holds $\,\P-$a.e.\,\,for the $\,\P-$integrable function   $\, f_* = \int_{\R}   x   \,     {\bm \mu}  ( \ud x, \cdot): \Omega \to \R\,;$      and there exist a $($further, relabelled$\,)$ subsequence $f_{k_1}, f_{k_2}, \cdots\,,$ as well  as a sequence of sets $\, A_1 \subseteq A_2 \subseteq \cdots \subseteq A_n \subseteq \cdots \,$ in $\,\F\,$ with $\,\lim_{n \to \infty} \P (A_n ) =1\,,$ such that the sequence
\begin{equation} 
\label{A16}
\Big( f_{k_n}  \, \mathbf{1}_{A_{n}} \Big)_{n \in \N}~~~\text{is bounded in} ~ \,\Ll^1 (\P) \,.
\end{equation} 
 {\bf (ii)} Conversely, suppose there exist a subsequence $f_{k_1}, f_{k_2}, \cdots\,,$ and a sequence of sets $\, A_1 \subseteq A_2 \subseteq \cdots \subseteq A_n \subseteq \cdots \,$ in $\,\F\,,$ satisfying $\,\lim_{n \to \infty} \P (A_n ) =1\,$ as well as the property \eqref{A16}. 

Then the \textsc{Ces\`aro}-mean     convergence \eqref{2} holds $\,\P\,-$a.e.\,along a $($relabelled,\,``determining"$\,)$ subsequence,   with a  limiting function $\,f_*$ which is $\,\P\,-$integrable$\,:  \, \E^\P \big( \big| f_* \big| \big) < \infty\,.$
 \end{proposition}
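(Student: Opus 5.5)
For part (i) I would work along the determining subsequence and use \textsc{Aldous}'s subsequence principle together with a truncation argument. Since $\omega\mapsto\bm\mu_\omega$ is tail-measurable and satisfies \eqref{A15}, the function $f_*=\int x\,\bm\mu(\ud x,\cdot)$ is $\P$-integrable. To get the a.e.\ \textsc{Ces\`aro} convergence without $\Ll^1$-boundedness, I would pass to a further subsequence that is ``almost exchangeable'' in the sense of \textsc{Aldous}: it can be coupled with a sequence $(\xi_n)$ that is conditionally i.i.d.\ given the tail, with directing measure $\bm\mu$, in such a way that $\sum_n |f_{k_n}-\xi_n|<\infty$ a.e. Given this coupling, the strong law for $(\xi_n)$ conditionally on $\bm\mu$, which is valid because $\int|x|\bm\mu_\omega(\ud x)<\infty$ for a.e.\ $\omega$, gives $\frac1N\sum\xi_n\to f_*$ a.e. The \textsc{Kronecker}-type estimate $\frac1N\sum|f_{k_n}-\xi_n|\to0$ then transfers this to the $f_{k_n}$.

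A more self-contained route avoids the full coupling. I would truncate at level $n$, setting $g_n=f_{k_n}\mathbf 1_{\{|f_{k_n}|\le n\}}$, and then check three things.
\begin{enumerate}
\item $\sum_n\P(|f_{k_n}|>n)<\infty$ after thinning. Stable convergence gives $\P(|f_{k_n}|>m)\to\E\,\bm\mu(\{|x|>m\})$, and $\sum_m \E\,\bm\mu(\{|x|>m\})\le\E\int|x|\,\bm\mu(\ud x)<\infty$. Choosing the $k_n$ sparse enough, each term is at most twice its limit plus $2^{-n}$.
\item $\E(g_n\mid\sigma(g_1,\dots,g_{n-1}))-\int_{[-n,n]}x\,\bm\mu(\ud x)$ is summably small. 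This follows from stable convergence tested against finitely many sets, again after thinning.
\item $\sum_n \operatorname{Var}(g_n)/n^2<\infty$, by the usual \textsc{Kolmogorov} estimate $\sum_n n^{-2}\int_{|x|\le n}x^2\,\bm\mu(\ud x)\le C\int|x|\,\bm\mu(\ud x)$.
\end{enumerate}
The martingale SLLN (\textsc{Kronecker} together with the $\Ll^2$ martingale convergence theorem) then yields \eqref{2}.

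Still in part (i), I would build the sets $A_n$ with $\Ll^1$ bounds along the same subsequence. Put $A_n:=\{|f_{k_n}|\le n\}^c{}^c$ modified suitably. The natural choice is $A_n:=\bigcap_{m\ge n}\{|f_{k_m}|\le m\}$, which is increasing in $n$ and satisfies $\P(A_n)\to1$ by \textsc{Borel--Cantelli} and step 1. One then has
$\E(|f_{k_n}|\mathbf 1_{A_n})\le\E(|f_{k_n}|\wedge n)$. The main obstacle is showing that $\sup_n\E(|f_{k_n}|\wedge n)<\infty$. Stable (weak) convergence gives, for each fixed $m$, $\E(|f_{k_n}|\wedge m)\to\E\int(|x|\wedge m)\,\bm\mu(\ud x)\le C$. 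A diagonal choice of $k_n$, taking $k_n$ large enough that $\E(|f_{k_n}|\wedge n)\le C+1$, handles this. Since the thinning needed in steps 1 and 2 only requires going further out, all these constraints can be satisfied simultaneously.

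For part (ii), I would first pass to a determining subsequence, which is possible because $\P(A_n^c)\to0$ and the $\Ll^1$ bound give boundedness in probability. Then I would show \eqref{A15}. For fixed $m$ and $B\in\F$, stable convergence gives
$$\E\Big[\mathbf 1_B\int(|x|\wedge m)\,\bm\mu(\ud x)\Big]=\lim_n\E\big[\mathbf 1_B(|f_{k_n}|\wedge m)\big].$$
The right-hand side is at most $\limsup_n\E(|f_{k_n}|\mathbf 1_{A_n})+m\,\P(A_n^c)\le C$. Letting $m\uparrow\infty$ by monotone convergence gives \eqref{A15}, and part (i) then yields the a.e.\ convergence \eqref{2} with $f_*$ $\P$-integrable, since $|f_*|\le\int|x|\,\bm\mu(\ud x)$.
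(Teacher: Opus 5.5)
Your main line is correct, but it is organized differently from the paper, which only sketches this proposition. In the paper, part (i) is referred to an outside source. The paper's own tools for the analogous steps are the exchangeable (Aldous) approximation of Section~\ref{sec5} for the Ces\`aro convergence. For the Egorov-type sets it uses a Skorokhod/Strassen--Dudley coupling of $f_{k_n}\mathbf{1}_B$ with i.i.d.\ copies, followed by Egorov's theorem (Step 3 in Section~\ref{sec4}). Part (ii) is referred to Section~\ref{sec4}. This presumably means Koml\'os on each $A_j$ plus diagonalization, as in Theorem~\ref{Thm2.1}(i), with integrability of $f_*$ then coming from Fatou: $\E^\P(|f_*|\mathbf{1}_{A_j})\le\liminf_N N^{-1}\sum_{n=j}^N\E^\P(|f_{k_n}|\mathbf{1}_{A_n})$.

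You instead take the strong law from the almost-exchangeable a.e.\ coupling $\sum_n|f_{k_n}-\xi_n|<\infty$ (Berkes--P\'eter/Aldous), and otherwise you use only truncation. Your sets $A_n=\bigcap_{m\ge n}\{|f_{k_m}|\le m\}$ do give \eqref{A16}, provided $k_n$ is chosen so that $\P(|f_{k_n}|\ge n)\le\E\,{\bm\mu}(\{|x|\ge n\})+2^{-n}$ and $\E(|f_{k_n}|\wedge n)\le C+1$, where $C:=\E^\P\int_\R|x|\,{\bm\mu}(\ud x,\cdot)$. In the other direction, testing stable convergence against $|x|\wedge m$ correctly recovers $C<\infty$ from \eqref{A16}.

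Your route is more elementary than Skorokhod plus Egorov. It shows directly that the moment condition on ${\bm\mu}$ and \eqref{A16} are equivalent up to subsequences. In (ii) it also identifies the limit as $\int x\,{\bm\mu}(\ud x,\cdot)$, which the Koml\'os route does not give for free. In exchange, the Koml\'os route never needs the directing measure in (ii).

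Your ``more self-contained'' alternative, however, fails at step 2 as written. The $\sigma$-algebra $\sigma(g_1,\dots,g_{n-1})$ is generated by real-valued functions, so it is not finite. Stable convergence, being a weak-type convergence, does not control conditional expectations given such a $\sigma$-algebra. Moreover, $\Phi_n:=\int_{[-n,n]}x\,{\bm\mu}(\ud x,\cdot)$ is not measurable with respect to it, so at best you could compare with a conditional expectation of $\Phi_n$.

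Here is a concrete failure. On $[0,1)$ take $f_k(x)=r_k(x)+10^{-k}x$, with $r_k$ the Rademacher functions. This sequence is determining with ${\bm\mu}\equiv(\delta_{-1}+\delta_1)/2$, yet every $f_k$ generates the Borel $\sigma$-algebra. Hence along any subsequence, for $n\ge 3$, $\E(g_n\mid\sigma(g_1,\dots,g_{n-1}))=g_n=\pm1+O(10^{-k_n})$, whereas $\Phi_n=0$.

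The standard repair goes as follows.
\begin{enumerate}
\item Discretize to $\widetilde g_j$ taking finitely many values, with $|g_j-\widetilde g_j|\le 2^{-j}$.
\item Condition on the finite $\sigma$-algebras $\mathcal G_{n-1}=\sigma(\widetilde g_1,\dots,\widetilde g_{n-1})$.
\item Use stable convergence on the atoms of $\mathcal G_{n-1}$ to make $\E(\widetilde g_n\mid\mathcal G_{n-1})-\E(\Phi_n\mid\mathcal G_{n-1})$ summably small.
\item Obtain $\E(\Phi_n\mid\mathcal G_{n-1})\to f_*$ a.e.\ from martingale convergence. This uses that ${\bm\mu}$ is $\bigvee_n\mathcal G_n$-measurable up to null sets, together with the bound $|\Phi_n-f_*|\le\int_{\{|x|>m\}}|x|\,{\bm\mu}(\ud x,\cdot)$ for $n\ge m$.
\end{enumerate}
Since your first route already yields the Ces\`aro convergence, this flaw does not invalidate the proposal.
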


 The condition \eqref{A16} is slightly stronger than \eqref{A1}, though  still weaker than the $  \,\Ll^1 (\P)-$boundedness   $\, \sup_{n \in \N} \,\E^\P \big( \big| f_{k_n} \big|   \big) < \infty\,$ associated with the \textsc{Koml\'os} theorem as in \eqref{1}; it is the exact analogue, in the present context, of the \textsc{Egorov}-type condition in Theorem 2.2 of \cite{BKS}.

\section{Sufficiency and Necessity in the Lacunary/Hereditary WLLN}
\label{sec3}

 We move now from the Strong to the Weak Law of Large Numbers (WLLN), and formulate in Theorem \ref{Thm3.2} right below    conditions both sufficient and necessary for the validity of its Lacunary/Hereditary version in the context of general sequences. The proof of this result  is presented in section \ref{sec10}; it    relies very crucially on   the  novel  sufficient/necessary conditions in   Theorem \ref{Thm6.1}      pertaining to   the Weak Law of Large Numbers for Exchangeable Sequences, a heretofore open question.

\subsection{General Sequences}
\label{sec3.3}

  \begin{theorem} 
  \label{Thm3.2}  {\bf Sufficiency/Necessity in the  Lacunary/Hereditary   WLLN  for    General Sequences:}
   Consider a sequence of measurable functions $\,f_1, f_2, \cdots$ on a   probability space $\,(\Omega, \F, \P)$, and recall the tail $\sigma-$algebra of \eqref{A6a}. Then the following conditions are equivalent: 
   
     \smallskip
     \noindent 
     {\bf (i)} {\rm Lacunary-Hereditary WLLN:} There exist real-valued,  $\,{\cal T}-$measurable ``correctors" $D_1, D_2, \cdots\,$ with $\,\P \big( \big| D_N \big| \le N,\, \forall \, N \in \N \big)=1$ and a subsequence $\,f_{k_1}, f_{k_2}, \cdots\,  $       along which,   and   along whose every  subsequence,  
     \begin{equation} 
\label{D2a}
 \lim_{N \to \infty} \bigg(  \frac{1}{\,N\,} \sum_{n=1}^N f_{k_n}   - D_N \bigg) =\,0 \quad \text{holds in} ~\P\text{\,--\,probability.}    
\end{equation}
 {\bf (i)$^\prime$} {\rm Lacunary-Hereditary Conditional WLLN:} There exist real-valued $\,{\cal T}-$measurable ``correctors" $\,D_1, D_2, \cdots$ with $\,\P \big( \big| D_N \big| \le N,\, \forall \, N \in \N \big)=1\,$ and a subsequence $\,f_{k_1}, f_{k_2}, \cdots\, $     along which, and along whose every subsequence,   we have  for every $\, \eps >0 \,$ the convergence           \begin{equation} 
\label{D5}
 \lim_{N \to \infty} \P  \bigg(  \bigg|  \frac{1}{\,N\,} \sum_{n=1}^N f_{k_n}   - D_N \bigg| > \eps  \, \bigg| \,{\cal T} \bigg) =\,0 \quad \text{in} ~~\P\text{\,--\,probability.}    
\end{equation}
     {\bf (ii)} The sequence $\, f_1, f_2, \cdots\,$ contains a {\rm determining} subsequence $\, f_{k_1}, f_{k_2}, \cdots\,$   whose limit random probability distribution  ${\bm \mu} \,,$      as in and below \eqref{B0},   satisfies  in $\P-$probability 
       \begin{equation} 
\label{B00}
\lim_{N \to \infty}  \Big( N \, {\bm \mu}  \big(\R \setminus [-N,N]  \big) \Big) = \,0\,,\qquad \lim_{N \to \infty}  \frac{1}{\,N\,} \int_{[-N,N]} x^2  \,  {\bm \mu}  (\ud   x, \cdot \,) \,=\,0\,.
\end{equation} 
Each  corrector    $\,D_N \,$  in {\bf (i)},\,{\bf (i)}$^\prime$  can be taken     as the weak$\,-\,\Ll^2\,$ limit of      $ \, \big( \, \E \big(  f_{k_{n}} \, \mathbf{1}_{\{ | f_{k_{n}} | \le N \}} \,\big|\, {\cal T} \,\big) \big)_{n \in \N}\,;$  and $\,  \lim_{N \to \infty} \big( D_N  /  \sqrt{N\,} \, \big) =0\,$ holds in $\P-$probability.
  \end{theorem}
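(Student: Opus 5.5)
The plan is to pass through Aldous's subsequence theory and reduce everything to the exchangeable case, where the paper announces Theorem \ref{Thm6.1} as giving necessary and sufficient conditions for the WLLN. The cycle of implications will be (ii) $\Rightarrow$ (i)$'$ $\Rightarrow$ (i) $\Rightarrow$ (ii). The implication (i)$'\Rightarrow$(i) is immediate: taking expectations in \eqref{D5} and using bounded convergence gives \eqref{D2a}.

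\emph{(ii) $\Rightarrow$ (i)$'$.} Start from a determining subsequence with limit random measure ${\bm\mu}$ satisfying \eqref{B00}. By Aldous's theorem \cite{A}, after passing to a further subsequence we may assume that every subsequence is asymptotically (almost) exchangeable, directed by ${\bm\mu}$. Concretely, we can choose $g_n$ that are conditionally i.i.d.\ with law ${\bm\mu}_\omega$ given ${\cal T}$, on an enlarged space, with $\sum_n \P(|f_{k_n}-g_n|>\eps_n)<\infty$ for a summable sequence $\eps_n$. Conditionally on ${\cal T}$, the $g_n$ are i.i.d., so the classical Kolmogorov--Feller computation applies $\omega$-wise. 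We truncate at level $N$ and apply Chebyshev conditionally:
\[
\P\Big(\big|\tfrac1N\textstyle\sum_{n\le N} g_n - \int_{[-N,N]}x\,{\bm\mu}(\ud x)\big|>\eps \,\Big|\,{\cal T}\Big)\le N{\bm\mu}(\R\setminus[-N,N])+\frac{1}{\eps^2 N}\int_{[-N,N]}x^2\,{\bm\mu}(\ud x).
\]
By \eqref{B00} the right-hand side tends to $0$ in probability. This identifies $D_N=\int_{[-N,N]}x\,{\bm\mu}(\ud x)$, which is ${\cal T}$-measurable and satisfies $|D_N|\le N$. Stable convergence shows that $D_N$ is the weak-$\Ll^2$ limit of $\E(f_{k_n}\mathbf 1_{\{|f_{k_n}|\le N\}}\mid{\cal T})$. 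By Cauchy--Schwarz, $D_N^2\le\int_{[-N,N]}x^2\,{\bm\mu}(\ud x)=o(N)$ in probability, which gives $D_N/\sqrt N\to0$.

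Two further points must be checked. First, the summable approximation $f_{k_n}\approx g_n$ must transfer the conditional-probability convergence back to the $f$'s. The error $\frac1N\sum_{n\le N}|f_{k_n}-g_n|$ tends to $0$ a.s.\ by Borel--Cantelli. Second, the whole argument must apply to every further subsequence. This holds because Aldous's approximation is hereditary and the directing measure is unchanged under passage to subsequences.

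\emph{(i) $\Rightarrow$ (ii).} This is the main obstacle. Assume (i) along some subsequence. Boundedness in probability of the $f_{k_n}$ is not given a priori, so we first need to extract it. If the $f_{k_n}$ were not tight, one could choose a further subsequence with $\P(|f_{k_n}|>M_n)\ge\delta$ for $M_n$ growing fast enough. Taking $N=1$ blocks, or rather sparse sub-subsequences, would then contradict \eqref{D2a}. The cleanest route is to interleave a single huge term into the Ces\`aro sums, which destroys the convergence since $|D_N|\le N$.

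With tightness in hand, pass to a determining, Aldous-exchangeable subsequence directed by ${\bm\mu}$. Hereditary validity of \eqref{D2a} then transfers, via the same approximation, to the conditionally i.i.d.\ sequence $(g_n)$ with correctors $D_N$ measurable with respect to its tail. At this point we invoke Theorem \ref{Thm6.1}, the necessity direction for exchangeable sequences, to obtain \eqref{B00}.

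The delicate point is that weak convergence for the exchangeable sequence must be deduced from the \emph{unconditional} statement \eqref{D2a}. Theorem \ref{Thm6.1} is supposed to handle exactly this, and the paper stresses that its conditions are \emph{not} simply the conditional i.i.d.\ ones. I therefore expect the substance to lie in verifying that its hypotheses are met, namely that the correctors $D_N$ may be taken ${\cal T}$-measurable and bounded by $N$. I would check this first, using a symmetrization argument: subtract two independent-given-${\cal T}$ copies obtained from disjoint subsequences, which eliminates $D_N$.
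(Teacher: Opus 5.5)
Your proof is correct in outline. For (ii)$\Rightarrow$(i) it takes a genuinely different and shorter route than the paper. For (i)$\Rightarrow$(ii) it follows the paper's skeleton but contains one false step. The symmetrization you mention at the end is the repair, and it has to become the actual argument rather than a side check.

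\textbf{(ii)$\Rightarrow$(i): comparison.} The paper uses Theorem 2 of Berkes--P\'eter to partition $\Omega$ into atoms $A^{(N)}_i$, on each of which the $f_j$ are close to an i.i.d.\ sequence with law $\mu^A$. It applies Feller's bound \eqref{6.21} atom by atom, assembles the piecewise-constant correctors \eqref{J7}, and then refines the partitions and uses martingale convergence to pass from $\mu^A$ to ${\bm\mu}$. Your single almost-exchangeable coupling with $\sum_n \P(|f_{k_n}-g_n|>\eps_n)<\infty$ replaces all of this. Chebyshev, conditionally on ${\bm\mu}$, produces the corrector $\int_{[-N,N]}x\,{\bm\mu}(\ud x)$ at once. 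Heredity is automatic, because $(g_{n_j})$ still couples to $(f_{k_{n_j}})$ with the same directing measure.

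\textbf{(ii)$\Rightarrow$(i): a needed correction.} The coupled $g_n$ are conditionally i.i.d.\ given ${\bm\mu}$ (equivalently, given their own tail), but in general not given ${\cal T}$. Since $g_n$ is eventually within $\eps_n$ of $f_{k_n}$, the tail ${\cal T}$ can see the $g$'s. For instance, if integer-valued i.i.d.\ $g_n$ are perturbed by tiny amounts encoding $g_1,\dots,g_{n-1}$ into $f_n$, then every $g_n$ is ${\cal T}$-measurable. So condition on ${\bm\mu}$, prove the unconditional statement (i), and obtain (i)$'$ from the trivial equivalence (i)$\Leftrightarrow$(i)$'$. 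A minor point: stable convergence identifies $D_N$ as the weak-$\Ll^2$ limit only at levels with ${\bm\mu}(\{\pm N\})=0$ a.s., since $x\mapsto x\mathbf 1_{\{|x|\le N\}}$ is discontinuous at $\pm N$. The statement itself glosses over this.

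\textbf{(i)$\Rightarrow$(ii): the false step.} With your pathwise coupling, the claim that the $D_N$ are measurable with respect to the tail of the $g$'s is false. They are only ${\cal T}$-measurable, and ${\cal T}_*={\bm\sigma}({\bm\mu})\subseteq{\cal T}$ can be strict. In the example above, a truncation at $\pm N$ of $\frac1N\sum_{n\le N}g_n$ is a ${\cal T}$-measurable corrector. So a single transferred WLLN for the $g$'s can hold even when \eqref{B00} fails, e.g.\ for integer-valued Cauchy-type ${\bm\mu}$. Hence Theorem \ref{Thm6.1}, which needs ${\cal T}_*$-measurable correctors, cannot be invoked at that point. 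Your worry about ``${\cal T}$-measurable and bounded by $N$'' misplaces the issue, since those properties are given.

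\textbf{(i)$\Rightarrow$(ii): the repair.} Heredity must be used, exactly through your symmetrization.
Two disjoint interleaved subsequences $(f_{k_{n_j}})_j$ and $(f_{k_{n'_j}})_j$ share the same $D_N$, so $\frac1N\sum_{j\le N}\bigl(f_{k_{n_j}}-f_{k_{n'_j}}\bigr)\to0$ in probability.
The coupling transfers this to $s_j:=g_{n_j}-g_{n'_j}$. These are conditionally i.i.d.\ with symmetric law ${\bm\mu}*\check{\bm\mu}$, where $\check{\bm\mu}$ is the reflection of ${\bm\mu}$, and they satisfy the WLLN with zero correctors.
Theorem \ref{Thm6.1} then gives \eqref{6.12} for ${\bm\mu}*\check{\bm\mu}$.
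The bounds \eqref{6.33} and \eqref{6.32}, applied conditionally on ${\bm\mu}$ for $N\ge N_0(\omega)$, desymmetrize this to $T_t\to0$ and $V_t\to0$ for ${\bm\mu}$.
Finally, (ii)$'\Rightarrow$(ii) of Theorem \ref{Thm6.1} yields \eqref{B00}.

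The paper transfers instead via the distributional approximation \eqref{A7} and block-gluing (Proposition \ref{Prop3.1}(i)), asserting the transferred WLLN also with the ${\cal T}_*$-measurable $\Delta_N$. Your coupling makes the transfer itself trivial, but it makes the symmetrization step indispensable.
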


     \begin{remark} {\bf  A Sufficient Condition:} 
  {\rm It was  shown in \cite{KS} that   the \textsc{Koml\'os}-type condition
 $\,
 \lim_{N \to \infty} \big( N \cdot \,\sup_{n \in \N} \,\P \big( \big| f_n \big| > N \big) \big) =0\,,$ 
is {\it sufficient}  for the validity of the Lacunary-Hereditary    Weak Law of Large Numbers \eqref{D2a}. As we shall see in Proposition   \ref{Prop6.3}\,(${\mathfrak A}$), this sufficient condition is {\it not} necessary for this WLLN to hold. 
  } 
       \end{remark}

\section{The WLLN for Exchangeable Sequences}
\label{sec6}

 The proof of Theorem  \ref{Thm3.2}  relies crucially on   conditions not just sufficient, but also {\it necessary},  for the validity of the Weak Law of Large Numbers (WLLN) in the context of exchangeable    sequences\,---\,a  question that has been open for a long time (cf.\,\cite{St1} and its citations).  
 
 We develop    such conditions now. This is a considerable task. It    encompasses   Theorem \ref{Thm6.2} for the classical WLLN in the I.I.D.\,\,case;   and  Theorem \ref{Thm6.1}   for the  exchangeable case,  a central result  of the present paper.

\subsection{The Classical Case}
\label{sec6.01}

On a   probability space $\,(\Omega, \F, \P)$, let us   consider     real-valued, I.I.D.       $\, h_1, h_2, \cdots\,,$ and   adopt the notation    
\begin{align} 
\delta_t (\eps)  \, &:= \,\E   \big( h_1 \cdot  \mathbf{1}_{ \{ |h_1| \le \eps t \} }   \big)\,, \quad \delta_t  := \delta_t (1)
\label{6.6} \\
 \pi_N (\eps) \,&:= \,\P \bigg( \, \bigg| \frac{1}{\,N\,} \sum_{n=1}^N h_n - \delta_N \bigg| > \eps  \bigg), \quad  \pi_N :=  \pi_N (1)
 \label{6.9'} \\
\tau_t (\eps) \,&:= \,t \cdot \P \big( \,  |h_1| > \eps\, t     \,    \big)\,, \quad \tau_t := \tau_t (1) 
\label{6.7} \\
 \sigma_t (\eps) & \,:= \,\frac{1}{\,t\,} \cdot \E \big( \,h_1^2 \cdot   \mathbf{1}_{ \{ |h_1| \le \eps t \} }    \big)\,, \quad \sigma_t :=  \sigma_t (1) 
 \label{6.8} \\
 v_t (\eps) \,&:= \, \frac{1}{\,t\,} \cdot 
\text{Var} \big( \,h_1  \cdot   \mathbf{1}_{ \{ |h_1| \le \eps t \} }    \big)  \,= \, \sigma_t (\eps) - \frac{1}{\,t\,}\, \delta_t^2 (\eps)\,, \quad v_t := v_t (1)  
\label{6.9} \\
 \rho_t (\eps) \,&:= \,\frac{1}{\,t\,} \cdot \E  \Big( \big(\big| h_1 \big|  \wedge      \eps t \big)^2 \Big)  = \eps^2 \cdot \tau_t (\eps) + \sigma_t (\eps)\,, \quad \rho_t :=  \rho_t (1) 
 \label{6.8a}  
\end{align}
 for $\eps >0\,$, $N \in \N$, $t \in (0, \infty)$; all these are real numbers. We have then the celebrated  \textsc{Kolmogorov}--\textsc{Feller} (\cite{K1},\,\cite{F})   Weak Law of Large Numbers (cf.\,\cite{Kol};\,\cite{Ch},\,\S\,5.2;\,\cite{Du},\,\S\,2.2.3). The equivalence of    {\bf (i)},\,{\bf (ii)}  below is      in \textsc{Feller}\,\cite{F},\,pp.\,235-236;   the other   equivalent  conditions seem to be new.

\begin{theorem} 
  \label{Thm6.2}  {\bf Necessity and Sufficiency in the  Classical  WLLN for I.I.D. Sequences:}
  For   real-valued I.I.D. functions $\,h_1, h_2, \cdots,$  the following are equivalent: 
  
  \smallskip
  \noindent
  {\bf (i)} {\rm WLLN:} There exist ``correctors"     $\,d_1, d_2, \cdots\,$ in $\R\,,$ so that 
  $ \,
\lim_{N \to \infty} \left( \frac{1}{N} \sum_{n=1}^N h_n - d_N \right) = 0\,$ holds in $\,\P-$probability; i.e., for each $\eps >0$ we have
\begin{equation} 
\label{6.15}
\pi_N^\star( \eps)\,:= \,  \P \bigg( \, \bigg| \frac{1}{\,N\,} \sum_{n=1}^N h_n - d_N \bigg| > \eps  \bigg)\, \longrightarrow \, 0\,, \quad \text{as}~~ N \to \infty\,.
\end{equation}
 {\bf (ii)} With the notation of \eqref{6.7}, for every $\eps >0$ we have 
 \begin{equation} 
\label{6.16}
\lim_{M \to \infty} \tau_M (\eps) =0\,.
\end{equation}
{\bf (ii)$^\prime$} With the notation of \eqref{6.8}, for every $\eps >0$ we have 
 \begin{equation} 
\label{6.17}
\lim_{M \to \infty} \sigma_M (\eps) =0\,.
\end{equation}
{\bf (ii)$^{\prime \prime}$}  With the notation of \eqref{6.9}, for every $\eps >0$ we have 
 \begin{equation} 
\label{6.18}
\lim_{M \to \infty} v_M (\eps) =0\,.
\end{equation}
{\bf (ii)$^{\prime \prime \prime}$}  With the notation of \eqref{6.8a}, for every $\eps >0$ we have 
 \begin{equation} 
\label{6.17a}
\lim_{M \to \infty} \rho_M (\eps) =0\,.
\end{equation}
 {\bf (iii)} With the notation of \eqref{6.7},   for every $\eps >0$ we have 
 \begin{equation} 
\label{6.16a}
\lim_{M \to \infty} \frac{1}{\,M\,} \int_0^M \tau_t (\eps)\, \ud t\, =\,0\,.
\end{equation}
{\bf (iii)$^{\prime}$} With the notation of \eqref{6.8},   for every $\eps >0$ we have 
 \begin{equation} 
\label{6.16b}
\lim_{M \to \infty} \,M \int_M^\infty \frac{\sigma_t (\eps)}{\,t^2\,}  \, \ud t\, =\,0\,.
\end{equation}
{\bf (iii)$^{\prime \prime}$}  With the notation of \eqref{6.9},   for every $\eps >0$ we have 
 \begin{equation} 
\label{6.16c}
\lim_{M \to \infty} \,M \int_M^\infty \frac{v_t (\eps)}{\,t^2\,}  \, \ud t\, =\,0\,.
\end{equation}

When any $($therefore, all$\,)$ of  
\eqref{6.16}--\eqref{6.16c} hold, then   so does \eqref{6.15}   with $\, d_N \equiv \delta_N\,$  as in   \eqref{6.6}, and $\,\pi_N^\star( \eps) \equiv \pi_N ( \eps) $ as in  \eqref{6.9'}. 
   \end{theorem}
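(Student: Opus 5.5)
The plan is to organise the equivalences around the truncated quantities, using the elementary identities already recorded in \eqref{6.9} and \eqref{6.8a}. The analytic equivalences come first; the probabilistic condition (i) is attached to (ii) at the end.

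\emph{The analytic chain.} Since $\sigma_t(\eps)\ge v_t(\eps)$ and $\rho_t(\eps)\ge \eps^2\tau_t(\eps)$, $\rho_t(\eps)\ge\sigma_t(\eps)$, we have the cheap implications (ii)$^{\prime\prime\prime}\Rightarrow$(ii), (ii)$^{\prime}$, (ii)$^{\prime\prime}$. For (ii)$\Rightarrow$(ii)$^{\prime}$, write
\[
\E\big(h_1^2\mathbf 1_{\{|h_1|\le \eps t\}}\big)\le \int_0^{\eps t} 2s\,\P(|h_1|>s)\,\ud s ,
\]
so that $\sigma_t(\eps)\le \frac{2}{t}\int_0^{\eps t}\tau_s(1)\,\ud s$. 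Thus $\sigma_t(\eps)$ is a Ces\`aro average of $\tau_s$, and it vanishes when $\tau_s\to0$; this also gives (ii)$\Rightarrow$(iii). Conversely, (iii)$\Rightarrow$(ii) follows from the monotonicity of $t\mapsto\P(|h_1|>\eps t)$. On $[M/2,M]$ we have $\tau_t(\eps)\ge \frac12\tau_{M/2}(\eps)$, hence
\[
\frac1M\int_0^M\tau_t(\eps)\,\ud t\ \ge\ \frac14\,\tau_{M/2}(\eps).
\]
For (iii)$^{\prime}$, Fubini gives $M\int_M^\infty \sigma_t(\eps)t^{-2}\ud t = \E\big(h_1^2\, M\int_{\max(M,|h_1|/\eps)}^\infty t^{-3}\ud t\big)$, which is comparable to $\sigma_M(\eps)/2+\frac{\eps^2}{2}\,M\,\E(\mathbf 1_{\{|h_1|>\eps M\}} M^2/h_1^2)$-type terms; the cleaner route is to show it equals a constant times $\rho_M(\eps)$ up to bounded factors. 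So (iii)$^{\prime}\Leftrightarrow$(ii)$^{\prime\prime\prime}$. For (ii)$^{\prime\prime}$ and (iii)$^{\prime\prime}$ we need $\delta_t(\eps)^2/t\to0$. By Cauchy--Schwarz, $\delta_t^2\le t\,\sigma_t\cdot$ (which is circular), so instead I would show (ii)$^{\prime\prime}\Rightarrow$(ii) directly by symmetrization. This is the one genuinely nontrivial analytic point, and I expect it to be the main obstacle. Let $\tilde h=h_1-h_1'$ with an independent copy $h_1'$. Then $v_t(\eps)\to0$ for all $\eps$ forces, via a median argument, $\tau_t(\eps)\to0$ for $\tilde h$, and hence for $h_1$ by the weak symmetrization inequality $\P(|h_1-m|>x)\le 2\P(|\tilde h|>x)$. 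Here one needs $m/t\to0$, which holds since $m$ is fixed. The same treatment handles (iii)$^{\prime\prime}$ after the Fubini rewriting.

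\emph{Sufficiency and necessity of (i).} For (ii)$\Rightarrow$(i) with $d_N=\delta_N$, use the classical truncation $h_n\mathbf 1_{\{|h_n|\le N\}}$. We have $\P(\text{some } |h_n|>N,\ n\le N)\le\tau_N\to0$, and Chebyshev bounds the truncated sum by $v_N(1)/\eps^2\le\sigma_N/\eps^2\to0$. The same argument gives \eqref{6.15} for every $\eps$, since the threshold $N$ is fixed while only Chebyshev's $\eps$ varies. For (i)$\Rightarrow$(ii), symmetrize: $\frac1N\sum(h_n-h_n')\to0$ in probability, and no corrector is needed. Then apply L\'evy's inequality, or the standard fact that for symmetric independent summands $\P(\max_{n\le N}|\tilde h_n|>2\eps N)\le 2\P(|\sum\tilde h_n|>\eps N)$. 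This yields $1-(1-\P(|\tilde h|>2\eps N))^N\to0$, hence $N\P(|\tilde h|>2\eps N)\to0$. Finally, pass back from $\tilde h$ to $h_1$ by the median inequality, as above. This closes all the equivalences, and the final assertion about $d_N\equiv\delta_N$ is exactly what the sufficiency step produced.
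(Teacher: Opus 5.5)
The genuine gap is (ii)$^{\prime\prime}\Rightarrow$(ii), together with (iii)$^{\prime\prime}\Rightarrow$(iii)$^{\prime}$, which is the step you flagged yourself. The rest of your chain holds up. Your treatment of (i)$\Leftrightarrow$(ii) is correct and is the classical route the paper cites: Feller's truncation and Chebyshev with $d_N=\delta_N$ for sufficiency, and symmetrization, the L\'evy--Feller maximal inequality and the median inequality for necessity. The \textsc{Ces\`aro} bound $\sigma_t(\eps)\le \frac{2}{t}\int_0^{\eps t}\tau_s\,\ud s$ is also correct.

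Your Fubini computation for (iii)$^{\prime}$ is in fact exact. The contribution of $\{|h_1|>\eps M\}$ is $\frac{\eps^2}{2}\tau_M(\eps)$, not the expression you wrote, so $M\int_M^\infty \sigma_t(\eps)\,t^{-2}\,\ud t=\frac12\rho_M(\eps)$. This is the identity the paper extracts from \eqref{6.23}. Since the left side is a weighted average of $\sigma_t(\eps)$ over $t\ge M$, it also gives (ii)$^{\prime}\Rightarrow$(ii)$^{\prime\prime\prime}$ directly. A small slip: on $[M/2,M]$ the bound should read $\tau_t(\eps)\ge\frac12\tau_M(\eps)$, but the conclusion (iii)$\Rightarrow$(ii) is unaffected.

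\emph{Why the sketch for the $v$-conditions fails.} The claim that $v_t(\eps)\to0$ ``forces, via a median argument, $\tau_t(\eps)\to0$ for $\tilde h$'' is not supported, and the natural way to fill it is circular. Put $X_t:=h_1\mathbf 1_{\{|h_1|\le\eps t\}}$ with an independent copy $X_t'$. The identity $2t\,v_t(\eps)=\E(X_t-X_t')^2$ controls $h_1-h_1'$ only on $\{|h_1|\le\eps t,\ |h_1'|\le\eps t\}$. So any bound on $t\,\P(|\tilde h|>\eta t)$ brings back $2\tau_t(\eps)$, the quantity you are trying to control. Nor is there a Fubini rewriting of $M\int_M^\infty v_t(\eps)\,t^{-2}\,\ud t$, because $v_t(\eps)$ contains the nonlinear term $\delta_t^2(\eps)/t$.

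\emph{The missing idea.} This is Lemma~\ref{Lem6.4}: Cauchy--Schwarz is \emph{not} circular once you split off a fixed truncation level.
Choose $M$ with $\P(|h_1|>M)<p$ and write $\delta_N=\E\big(h_1\mathbf 1_{\{|h_1|\le M\}}\big)+\E\big(h_1\mathbf 1_{\{M<|h_1|\le N\}}\big)$.
The first term is at most $M$ in absolute value. The second lives on a set of probability less than $p$, so Cauchy--Schwarz bounds its square by $pN\sigma_N$. Hence $\delta_N^2\le 2M^2+2pN\sigma_N$, and therefore $v_N\ge(1-\eta)\sigma_N-\eta$ for all $N\ge N_\eta$.
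This pointwise comparison gives (ii)$^{\prime\prime}\Rightarrow$(ii)$^{\prime}$ at once. Integrating it against $M\,t^{-2}\,\ud t$ over $[M,\infty)$ gives (iii)$^{\prime\prime}\Rightarrow$(iii)$^{\prime}$. Your identity then closes the cycle.

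A median argument can be rescued, but it must be applied to $X_t$ itself, whose median stays bounded in $t$. The weak symmetrization inequality and Chebyshev give $t\,\P(2\eta t\le|h_1|\le\eps t)\le 4v_t(\eps)/\eta^2$, and a dyadic summation then yields $\tau_t\to0$. That is a different argument from the one you sketched, and it still needs additional work to cover (iii)$^{\prime\prime}$.
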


\subsection{The Exchangeable  Case}
\label{sec6.02}

 Let us consider now exchangeable, real-valued    $\,g_1, g_2, \cdots$ on the   probability space $\,(\Omega, \F, \P)$, with  
\begin{equation} 
\label{6.1}
{\cal T}_* := \bigcap_{n \in \N} \sigma \big( g_n, g_{n +1}, \cdots \big)
\end{equation}
  their tail $\sigma-$algebra (which is trivial  when the $g_1, g_2, \cdots$ are independent, by the \textsc{Kolmogorov} zero-one law).   Conditioned on this $\sigma-$algebra, the $\,g_1, g_2, \cdots$ become I.I.D.,  by the \textsc{de\,Finetti} theorem (e.g.\,\,\cite{CT}, p.\,222).  For $\eps >0$, $N \in \N$, $t \in (0, \infty)$  we introduce the conditional versions of the quantities in \eqref{6.6}--\eqref{6.9}, namely:  
\begin{align} 
\Delta_t (\eps) \, & := \,\E   \big( g_1 \cdot  \mathbf{1}_{ \{ |g_1| \le \eps t \} } \, \big| \, {\cal T}_* \big)\,, \quad \Delta_t := \Delta_t (1)
\label{6.2} \\
 \mathbold{\Pi}_N (\eps) \,&:= \,\P \bigg( \, \bigg| \frac{1}{\,N\,} \sum_{n=1}^N g_n - \Delta_N \bigg| > \eps \, \bigg| \, {\cal T}_* \bigg)\,, \quad  \mathbold{\Pi}_N :=  \mathbold{\Pi}_N (1)
 \label{6.5'} \\
p_N ( \eps) \,&:=\, \, \E \big[ \,
 \mathbold{\Pi}_N (\eps) \, \big] \,= \P \bigg( \, \bigg| \frac{1}{\,N\,} \sum_{n=1}^N g_n - \Delta_N \bigg| > \eps  \bigg), \quad p_N := p_N (1)
 \label{6.5''} \\
T_t (\eps) \,&:= \,t \cdot \P \big( \,  |g_1| > \eps\, t     \, \big| \, {\cal T}_* \big)\,, \quad T_t := T_t (1) 
\label{6.3} \\
 \Sigma_t (\eps) \,&:= \,\frac{1}{\,t\,} \cdot \E \big( \,g_1^2 \cdot   \mathbf{1}_{ \{ |g_1| \le \eps t \} }    \, \big| \, {\cal T}_* \big)\,, \quad \Sigma_t :=  \Sigma_t (1) 
 \label{6.4} \\
 V_t (\eps) \,&:= \,\frac{1}{\,t\,} \cdot \text{Var} \big( \,g_1  \cdot   \mathbf{1}_{ \{ |g_1| \le \eps t \} }    \, \big| \, {\cal T}_* \big)\,, \quad  V_t := V_t (1) 
\label{6.5} \\
  R_t (\eps) \,&:= \,\frac{1}{\,t\,} \cdot \E \Big[\, \Big( \,\big|g_1 \big|  \wedge      \eps t \Big)^2      \, \Big| \, {\cal T}_* \Big]= \eps^2 \cdot T_t (\eps) + \Sigma_t (\eps)\,, \quad  R_t :=  R_t (1) \,.
  \label{6.4a} 
\end{align}

It has   been an open question  for quite some time    (cf.\,\cite{St1} and its references), whether conditions both necessary and sufficient exist for the Weak Law of Large Numbers in the context of exchangeable sequences, just as they exist for    sequences of independent   functions with common distribution. 

 The following result   settles this question in the affirmative. 

 \begin{theorem} 
  \label{Thm6.1}  {\bf Necessity and Sufficiency in the WLLN for Exchangeable Sequences:}  For    real-valued, exchangeable functions $\,g_1, g_2, \cdots,$ the following are equivalent: 
  
  \smallskip
  \noindent
  {\bf (i)} {\rm WLLN:} There exist real-valued, ${\cal T}_*-$measurable ``correctors"   $\,D_1, D_2, \cdots\,$  such that, for each $\eps >0\,,$  
\begin{equation} 
\label{6.11'}
p_N^\dagger ( \eps) \,:=\,  \P \bigg( \, \bigg| \frac{1}{\,N\,} \sum_{n=1}^N g_n - D_N \bigg| > \eps  \bigg)\, \longrightarrow \, 0\,, \quad \text{as}~~ N \to \infty\,.
\end{equation}
{\bf  (i)$^\prime$} {\rm \textsc{Conditional} WLLN:} There exist  real-valued, ${\cal T}_*-$measurable ``correctors" $\,D_1, D_2, \cdots$  such that, for each $\eps >0\,,$ 
 \begin{equation} 
\label{6.11''}
 \mathbold{\Pi}_N^\dagger (\eps) \,:=\,  \P \bigg( \, \bigg| \frac{1}{\,N\,} \sum_{n=1}^N g_n - D_N \bigg| > \eps\, \bigg| \, {\cal T}_*  \bigg)\, \longrightarrow \, 0 \quad \text{as}~~ N \to \infty\,, \qquad \text{in $\,\P-$probability\,}.
\end{equation}
{\bf (ii)} With the notation of \eqref{6.3}, \eqref{6.4}, for every $\eps >0$ we have {\rm both} 
 \begin{equation} 
\label{6.12}
\lim_{t \to \infty} T_t (\eps) =0~~~\text{{\rm and}}~~ ~\lim_{t \to \infty} \Sigma_t (\eps) =0\,,\quad \text{in $\,\P-$probability\,}.
\end{equation}
\noindent
{\bf (ii)$^{\prime}$}   With the notation of \eqref{6.3},  \eqref{6.5}, for every $\eps >0$ we have   {\rm both} 
  \begin{equation} 
 \label{6.14}
 \lim_{t \to \infty} T_t (\eps) =0~~~\text{{\rm and}}~~ ~\lim_{t \to \infty} V_t (\eps) =0\,,\quad \text{  in  $\,\P-$probability\,}.
 \end{equation}
 {\bf (ii)$^{\prime \prime}$}   With the notation of    
\eqref{6.4a}, for every $\eps >0$ we have  
 \begin{equation} 
\label{6.12a}
\lim_{t \to \infty} R_t (\eps) =0\,,\quad \text{in $\,\P-$probability\,}.
\end{equation}
{\bf (iii)} With the notation of \eqref{6.3},   for every $\eps >0$ we have 
 \begin{equation} 
\label{6.16a}
\lim_{M \to \infty} \frac{1}{M} \int_0^M   T_t (\eps) \, \ud t \,=\,0\,,\quad \text{in $\,\P-$probability\,}.
\end{equation}
{\bf (iii)$^\prime$} With the notation of \eqref{6.4},   for every $\eps >0$ we have 
 \begin{equation} 
\label{6.16a}
\lim_{M \to \infty} M \int_M^\infty   \frac{\,\Sigma_t (\eps)\,}{t^2} \, \ud t\,=\,0\,,\quad \text{in $\,\P-$probability\,}.
\end{equation}
 {\bf (iii)$^{\prime \prime}$}  With the notation of \eqref{6.5},   for every $\eps >0$ we have 
 \begin{equation} 
\label{6.16a}
\lim_{M \to \infty} M \int_M^\infty   \frac{\,V_t (\eps)\,}{t^2} \, \ud t\,=\,0\,,\quad \text{in $\,\P-$probability\,}.
\end{equation}

When any (therefore, all) of the conditions  \eqref{6.12}--\eqref{6.16a}  hold, then so do {\bf (i)},\,{\bf  (i)$^\prime$}   with   $\, D_N \equiv \Delta_N\,$ \\ as  in \eqref{6.2}$;$ and then   $\,p_N^\dagger( \eps) \equiv p_N ( \eps) $ holds in \eqref{6.11'},\,\eqref{6.5''}, as does $\, \mathbold{\Pi}_N^\dagger (\eps) \equiv  \mathbold{\Pi}_N  (\eps)\,$ in \eqref{6.5'},\,\eqref{6.11''}. The  correctors  of \eqref{6.11'},\,\eqref{6.11''} satisfy  $\,  \lim_{N \to \infty} \big( D_N  /  \sqrt{N\,} \, \big) =0\,$ in $\P-$probability.

   \end{theorem}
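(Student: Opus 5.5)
The proof reduces everything to Theorem~\ref{Thm6.2} applied pathwise under the de~Finetti disintegration. Let $\omega \mapsto \bm\mu_\omega$ be a regular version of the conditional law of $g_1$ given ${\cal T}_*$. Under $\P(\cdot\,|\,{\cal T}_*)(\omega)$ the $g_n$ are I.I.D.\ with law $\bm\mu_\omega$. All the random quantities $T_t(\eps),\Sigma_t(\eps),V_t(\eps),R_t(\eps),\Delta_t(\eps)$ are then, for a.e.\ $\omega$, exactly the deterministic quantities $\tau_t,\sigma_t,v_t,\rho_t,\delta_t$ of \eqref{6.6}--\eqref{6.8a} computed for $\bm\mu_\omega$. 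Likewise $\mathbold\Pi_N(\eps)(\omega)=\pi_N(\eps)[\bm\mu_\omega]$. The plan has three steps: (a) show equivalence among the conditions (ii)--(iii)$''$ "in probability"; (b) show (ii)$\Rightarrow$(i)$'$ with $D_N=\Delta_N$, and (i)$'\Leftrightarrow$(i); (c) prove the converse (i)$\Rightarrow$(ii), which is the real content.

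For (a), the deterministic identities and inequalities behind Theorem~\ref{Thm6.2} are quantitative. Examples are $\rho_t=\eps^2\tau_t+\sigma_t$, $\frac1M\int_0^M\tau_t\,\ud t\ge$ const$\cdot\tau_{M}(\cdot)$-type comparisons, and $\sigma_M(\eps)\le \eps^2 \frac1M\int_0^{M}\tau_t(\eps)\,\ud t\cdot C$. I will extract them as explicit pointwise bounds linking the functionals at scale $M$ to those at scales $cM$ and $\eps'$. These hold $\omega$ by $\omega$, so they transfer convergence in probability. One caution: convergence in probability is not pointwise. Monotonicity-type arguments, such as "$\tau_t\to0$ for all $\eps$", must be replaced by the bounds themselves, together with passing to a.s.-convergent subsequences where needed.

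For (b), I use Chebyshev conditionally:
\[
\mathbold\Pi_N(2\eps)\le N T_N(1)/N+\eps^{-2}V_N(1),
\]
splitting on $\{|g_n|\le N\}$, with $\eps$ rescaled as in Feller. This gives $\mathbold\Pi_N(\eps)\to0$ in probability. Bounded convergence then gives (i) with $p_N\to0$. Conversely, (i)$\Leftrightarrow$(i)$'$ holds because $\mathbold\Pi^\dagger_N\in[0,1]$ and $\E\,\mathbold\Pi^\dagger_N=p^\dagger_N$. The bound $\Delta_N/\sqrt N\to0$ follows from Cauchy--Schwarz: $\Delta_N^2/N\le\Sigma_N\to0$. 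For general correctors it follows from $|D_N-\Delta_N|\to0$ in probability, which is shown in (c).

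Step (c), necessity, is the main obstacle. Convergence in probability of $\mathbold\Pi^\dagger_N(\eps)$ does not give, for a fixed $\omega$, the classical WLLN for $\bm\mu_\omega$. It only gives it along $\omega$-dependent subsequences of $N$. My plan is to prove a quantitative version of Feller's necessity. Suppose for I.I.D.\ law $\mu$, a constant $d$, and a single $N$ we have $\P(|\frac1N\sum_1^N h_n-d|>\eps)\le\eta$. Then $\tau_N(c\eps)[\mu]\le C\eta$ and $\sigma_N(c\eps)\le C(\eps+\eta)$, with absolute constants. The route is symmetrization, then the Lévy/Ottaviani maximal inequality, then the elementary estimate $1-(1-q)^N\ge Nq/2$ for $Nq\le1$. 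This controls $N\,\P(|h^s|>2\eps N)$ and the truncated variance. Applying it for each $\omega$ at each $N$ with $\mu=\bm\mu_\omega$, $d=D_N(\omega)$, $\eta=\mathbold\Pi^\dagger_N(\eps)(\omega)$ gives $T_N(c\eps)\le C\,\mathbold\Pi^\dagger_N(\eps)$ on the set where the latter is small. That set has probability tending to one. The remaining difficulties are passing from integer $N$ to real $t$, which is a routine comparison at $\lfloor t\rfloor$, and handling the symmetrized versus unsymmetrized truncation via a conditional median. With these, (ii) follows in probability, as does $D_N-\Delta_N\to0$.
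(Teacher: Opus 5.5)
Your plan follows the paper's route, and the gap is in step (c). The shared skeleton is: de Finetti disintegration, the conditional Feller--Chebyshev bound $\mathbold{\Pi}_N(\eps)\le T_N+\eps^{-2}V_N$ for sufficiency, bounded convergence for (i)$\Leftrightarrow$(i)$'$, and for necessity a single-scale inequality applied $\omega$ by $\omega$. That last piece is Proposition~\ref{Prop6.6}, with general ${\cal T}_*$-measurable correctors absorbed by symmetrization exactly as in \eqref{6.34}.

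The gap is the truncated-variance half of that inequality. Symmetrization, the L\'evy--Ottaviani inequality and $1-(1-q)^N\ge Nq/2$ yield only $\tau^{sym}_N(2\eps)\wedge1\lesssim\eta$. They give no control of $\E\big[(h^{sym})^2\,\mathbf 1_{\{|h^{sym}|\le 2\eps N\}}\big]$. In the I.I.D.\ case Feller gets this half for free from the tail through Lemma~\ref{Lem6.5}. That transfer, however, runs over all scales $t\to\infty$ for one fixed law, and it is lost under convergence in probability: in Proposition~\ref{Prop6.3}$({\mathfrak C})$, $T_N\to0$ in probability while $\Sigma_N$ does not tend to $0$. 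You therefore need a separate lower bound $\P(|S_N|>N)\ge c\,(\sigma_N\wedge1)$ at the same scale, for symmetric summands bounded by $N$. The paper gets it by conditioning on $\Omega\setminus A_N$ (no summand exceeds $N$) and then the stopping-time iteration of Lemma~\ref{Lem6.10}: the $k$-th successive excursion of size $N$ has probability at most $\beta^k$, whence $N^{-2}\E S_N^2\lesssim\beta$. Hoffmann-J\o rgensen's inequality would also do. So would Paley--Zygmund with fourth moments, combined with $\P\big(|\sum_n h_n\mathbf 1_{\{|h_n|\le a\}}|>x\big)\le2\,\P\big(|\sum_n h_n|>x\big)$ for symmetric $h_n$. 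Without some such argument, step (c) delivers only $T_N\to0$, which by $({\mathfrak C})$ is not enough.

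Two smaller repairs are also needed.
\begin{itemize}
\item As stated, your single-$N$ lemma is false for the uncentred $\sigma_N$ with absolute constants. Take $h\equiv a=c\eps N$: then $\eta=0$ (with $d=a$) and $|{\rm median}|\le\eps N$, yet $\sigma_N(c\eps)=c^2\eps^2N$. Prove the bound for the centred $V_N$, as in \eqref{6.26}, and return to $\Sigma_N$ via Lemma~\ref{Lem6.4}. Its threshold $N_\eta(\omega)$ is a.s.\ finite, hence harmless in probability.
\item The form $\sigma_N(c\eps)\le C(\eps+\eta)$ does not by itself give $\Sigma_N(\eps_0)\to0$ for fixed $\eps_0$. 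You must add $\Sigma_N(\eps_0)\le\Sigma_N(c\eps)+\eps_0^2\,T_N(c\eps)$ for $c\eps\le\eps_0$ and let $\eps\downarrow0$. Alternatively, use the linear form $V_N(\eps)\wedge\eps^2\le C\eps^2\,\mathbold{\Pi}^\dagger_N(\eps)$, obtained by scaling the paper's bound.
\end{itemize}

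The rest is fine. Step (a) even reduces to exact identities, $R_M(\eps)=\frac{2\eps^2}{M}\int_0^M T_t(\eps)\,\ud t=2M\int_M^\infty\Sigma_t(\eps)\,t^{-2}\,\ud t$, plus Lemma~\ref{Lem6.4} for $V$. The bound $\Delta_N^2/N\le\Sigma_N$ settles the $\sqrt N$ claim.
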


\subsection{Some Counterexamples}
\label{sec6.0}

   Before presenting  the proofs of   (the essentially classical) Theorem \ref{Thm6.2}  and of (the novel) Theorem \ref{Thm6.1}, we discuss one   subtle difference:   {\sl ``\,There is no counterpart   in Theorem     \ref{Thm6.1} to the conditions {\bf (ii)},\,{\bf (ii)$^{\prime}$},\,{\bf (ii)$^{\prime \prime}$}  of Theorem \ref{Thm6.2}."}     Rather, these conditions {\it have to be combined,} as   they are in {\bf (ii)},\,{\bf (ii)$^{\prime}$},\,{\bf (ii)$^{\prime \prime}$}   of Theorem \ref{Thm6.1}, in order for their conditional versions to yield equivalent conditions in that context.

   \smallskip
 Here is   some intuition for this phenomenon. First,   \eqref{6.8a} gives the   
 identity $\, \rho_t = \sigma_t + \tau_t\,$. As we shall show in Lemma   \ref{Lem6.5} below, for I.I.D. $h_1, h_2, \cdots$ 
  {\it it is  indeed true} that $\, \big( \sigma_t \big)_{t >0 }\,$ remains bounded (respectively, tends to zero) if, and only if, $\, \big( \tau_t \big)_{t >0 }\,$ does so. 
  
     \smallskip
  But  there is a subtle caveat: {\it it may very well happen,   for an exchangeable sequence    $g_1, g_2, \cdots$ and for some  sequence  of natural numbers $N_1, N_2, \cdots$ increasing to infinity, that   we have }
  $$
  \lim_{k \to \infty} T_{N_k}   = 0 \qquad \text{while} \qquad  \lim_{k \to \infty} \Sigma_{N_k}   = \infty \,, 
  $$
{\it as well as }    
  $$
  \lim_{k \to \infty} T_{N_k -1}   = \infty \qquad \text{while} \qquad  \lim_{k \to \infty} \Sigma_{N_k -1}  = 0\,.
  $$
  Indeed, it suffices to consider  $g_1$ with   $\, \P ( g_1 = N_k ) = k / N_k\,$ for $k \in \N\,,$ and   zero otherwise; if    
  $\, \big( N_k \big)_{k \in \N} \subset \N\,$ tends to infinity sufficiently fast,   $g_1$ is well-defined  and both statements   above hold. 
  
  \smallskip
  Thus, the  conjunction of the two requirements in \eqref{6.12},    Theorem \ref{Thm6.1}\,(ii)$^\prime$  is necessary, as shown by    examples {\bf (${\mathfrak B}$)},\,{\bf (${\mathfrak C}$)}  of   Proposition   \ref{Prop6.3}.     This   will be proved in section \ref{sec6.44}, based on the above idea. 

\begin{proposition} 
  \label{Prop6.3}  {\bf Counterexamples:} The following hold. 
  
    \smallskip
  \noindent
  {\bf (${\mathfrak A}$)} There exist  exchangeable functions $\,g_1, g_2, \cdots$ with $($common$\,)$ symmetric distribution, for which

\smallskip
\noindent
$(a)$ the WLLN holds, namely, $\, \lim_{N \to \infty} \, \P \Big( \Big| \sum_{n=1}^N g_n \Big| > \eps \cdot N \Big) =0\,$ for every $~\eps >0\,;$

\smallskip
\noindent
$(a)^\prime$ the conditional $\,($in the a.e.\,sense, on the tail $\sigma-$algebra ${\cal T}_*)$ WLLN fails to hold on $(\Omega, {\cal T}_*, \P);$ namely, we have  $\, \overline{\lim}_{N \to \infty} \, \P \Big( \Big| \sum_{n=1}^N g_n \Big| >   C \cdot N \, \Big|\, {\cal T}_*\Big) >0\,,$    $~\P-$a.e., for every $C>0\,;$

\smallskip
\noindent
 $(b)$  $\,\overline{\lim}_{N \to \infty} \Sigma_N =1\,,$   $\,\P-$a.e.
 
 \smallskip
\noindent
 $(c)$  $\,\overline{\lim}_{N \to \infty}T_N =1\,,$   $\,\P-$a.e.

  \medskip
  \noindent
{\bf (${\mathfrak B}$)} There exist  exchangeable functions $\,g_1, g_2, \cdots$ with $($common$\,)$ symmetric distribution, for which

\smallskip
\noindent
$(a)$ the WLLN fails, namely, $\,\overline{\lim}_{N \to \infty} \, \P \Big( \Big| \sum_{n=1}^N g_n \Big| > C \cdot N \Big) =1\,$ for every $~C >0\,;$ 

\smallskip
\noindent
$(b)$ $\,\lim_{N \to \infty} \Sigma_N =0\,,$ \,in $\P-$probability; and

\smallskip
\noindent
$(c)$ $\,\overline{\lim}_{N \to \infty} \,\P \big( T_N \ge C \big) =1\,, ~~ \forall ~ C > 0\,.$

 \medskip
\noindent
{\bf (${\mathfrak C}$)} There exist  exchangeable functions $\,g_1, g_2, \cdots$ with $($common$\,)$ symmetric distribution, for which

\smallskip
\noindent
$(a)$ the WLLN fails, namely, $\,\overline{\lim}_{N \to \infty} \, \P \Big( \Big| \sum_{n=1}^N g_n \Big| > C \cdot N \Big) =1\,$ for every $~C >0\,;$

\smallskip
\noindent
$(b)$ $\,\lim_{N \to \infty}  T_N =0\,,$ \,in $\P-$probability; and

\smallskip
\noindent
$(c)$ $\,\overline{\lim}_{N \to \infty} \,\P \big( \Sigma_N \ge C \big) =1\,, ~~ \forall ~C > 0\,.$

\end{proposition}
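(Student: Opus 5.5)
We build all three examples as mixtures. Let $\Theta$ be a random index, independent of everything else, and, conditionally on $\{\Theta=\theta\}$, let $g_1,g_2,\cdots$ be I.I.D.\ with a symmetric law $\nu_\theta$. By \textsc{de\,Finetti}, and since the map $\theta\mapsto\nu_\theta$ will be injective, ${\cal T}_*$ coincides modulo null sets with $\sigma(\Theta)$. Every conditional quantity $T_t,\Sigma_t$ of \eqref{6.3}, \eqref{6.4} is then just the classical quantity $\tau_t,\sigma_t$ of the law $\nu_\Theta$. Symmetry gives $\Delta_N=0$, so $\sum_{n\le N}g_n$ needs no centering. Throughout we use two-point building blocks $\nu=(1-2q)\delta_0+q\delta_a+q\delta_{-a}$. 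For these, $\tau_t=2qt$ when $t<a$ and $0$ otherwise, and $\sigma_t=2qa^2/t$ when $t\ge a$ and $0$ otherwise.

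\textbf{Examples (${\mathfrak B}$) and (${\mathfrak C}$).} We use the mechanism described before the proposition. Choose $N_k\uparrow\infty$ very fast, and let $\Theta$ be uniform on $\N$-indexed blocks, so that $\P(\Theta=k)=c/k^2$. Take $\nu_k$ to put mass $q_k$ at $\pm a_k$.

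For (${\mathfrak C}$), put $a_k=N_k$ and $2q_k=k/N_k$. At time $t=N_j$ each component $k>j$ has $a_k>t$, so $\tau_t=2q_kN_j\le kN_j/N_k$, which is tiny; components $k\le j$ contribute $0$. Hence $T_t\to0$ in probability along all $t$, once the lacunarity also handles intermediate times. For $\Sigma$ at $t=N_k$ with $\Theta=k$ we get $\sigma=k\to\infty$. I will arrange $\overline{\lim}\,\P(\Sigma_N\ge C)=1$ by making, at each time $N_j$, the set $\{\Sigma_{N_j}\ge C\}$ carry mass tending to $1$. Instead of a single point, $\nu_\theta$ should charge the entire lacunary sequence: $\nu_\theta(\{\pm N_k\})\propto\theta_k k/N_k$, with a random $\Theta$ that is "active" at most scales. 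The simplest choice is a single law, $\P(|g_1|=N_k)=k/N_k$, mixed with a random scale factor so that the tail $\sigma$-algebra is nontrivial but the behaviour is the same on every $\Theta$. I expect a deterministic (I.I.D.) law to suffice here. Then $T_{N_k}\approx\sum_{j>k}jN_k/N_j\to0$ and $\Sigma_{N_k}\approx k\to\infty$, while $T_{N_k-1}\approx k$. The WLLN failure (a) follows because at $N=N_k-1$ the event that some $g_n=\pm N_k$ for $n\le N$ has probability $\approx1-e^{-k}$. On that event a single summand equals $N_k\approx N$ and the rest are small, so $|S_N|/N\ge1-o(1)$; the "$C\cdot N$" version is obtained by replacing $N_k$ with $kN_k$.

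For (${\mathfrak B}$) we swap the roles. We need $\Sigma\to0$ but $T$ large infinitely often, i.e.\ mass $k/N_k$ at $\pm k N_k$ observed at time $N_k$: then $\tau_{N_k}\approx k$ and $\sigma_{N_k}\approx 0$. Here the WLLN again fails by the big-jump argument. The real work is to make $\Sigma_t\to0$ along \emph{all} $t$, not just along $N_k$, which may require a condition on $\sigma_t$ at $t\approx kN_k$; at that time $\sigma_t\approx k\cdot kN_k/N_k\cdot(1/k)=k$. So the jump points must be chosen so that $\sigma_t$ stays small at every $t$ while $\tau_t$ does not. Since Lemma \ref{Lem6.5} forbids this for I.I.D.\ laws, genuine randomness in $\Theta$ is needed. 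The plan is therefore to let $\Theta$ select a random subset of scales, so that for each fixed $t$ the component active near $t$ is present only on a set of small probability. This gives $\Sigma_t\to0$ in probability, while $\{T_{N_k}\ge C\}$ has probability near $1$ along a well-chosen subsequence of times. Making these two requirements compatible is the main obstacle; I would try $\Theta$ uniform on $[0,1]$ with scale $N_k$ active iff the $k$-th digit pattern of $\Theta$ matches.

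\textbf{Example (${\mathfrak A}$).} Take $\Theta$ with $\P(\Theta=k)=p_k$ and $\nu_k$ a two-point law at $\pm N_k$ with mass $1/N_k$ each. Conditionally on $\Theta=k$, the I.I.D.\ WLLN fails: at $N\approx N_k$ we have $T_N\approx1$ and $\Sigma_N\approx1$, giving (b), (c), and (a)$'$ by Theorem \ref{Thm6.2}. Each $\Theta$ has only one bad scale, so to obtain $\overline{\lim}$ a.e.\ I instead let $\nu_\theta$ charge infinitely many scales $N_k$, $k\in S_\theta$, with $S_\theta$ an infinite random set. The sets $S_\theta$ are arranged so that each fixed $k$ lies in $S_\Theta$ with probability $\to0$. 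Unconditionally, $\P(|S_N|>\eps N)\le\P(\text{a bad scale near }N\text{ is active})\to0$, which gives (a). The routine step is checking that scales in $S_\theta$ far from $N$ contribute negligibly, by lacunarity, as in (${\mathfrak B}$).
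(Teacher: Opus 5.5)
Your plan for $({\mathfrak A})$ is essentially the paper's gliding-hump construction, up to normalizing the masses to $1/(2N_k)$ so that the limsups equal $1$. Each $x$ charges one lacunary scale per dyadic level, and each scale is active only on a dyadic cell of probability $2^{-m}$.

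The genuine gap is in $({\mathfrak B})$ and $({\mathfrak C})$. For $({\mathfrak C})$ you settle on a deterministic I.I.D.\ law with $\P(|g_1|=N_k)=k/N_k$, and this cannot work. Condition (b) requires $T_N\to0$ along \emph{every} $N$, and your own computation $T_{N_k-1}\approx k$ violates it; lacunarity does nothing for the times just below $N_k$. More fundamentally, Lemma \ref{Lem6.5}, which you correctly invoke for $({\mathfrak B})$, is an equivalence: for a fixed law, $\tau_t\to0$ iff $\sigma_t\to0$. So (b) and (c) of $({\mathfrak C})$ are incompatible for any I.I.D.\ sequence, and equally for a mixture that behaves the same way on every $\Theta$. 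Your first attempt, one scale per event $\{\Theta=k\}$, does give (b). But then $\{\Sigma_N\ge C\}$ is at most a single event $\{\Theta=k\}$ of vanishing probability, so (c) fails.

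The missing idea, common to $({\mathfrak B})$ and $({\mathfrak C})$, is an asymmetry between the two statistics. With your two-point blocks, $T_N$ registers \emph{every} active scale $a>N$ (with weight $2qN$), whereas $\Sigma_N$ registers every active scale $a\le N$ (with weight $2qa^2/N$). The paper builds at each level $m$ a ``comb'' of scales $a_1<\cdots<a_{2^m}$, the $j$-th active only on the $j$-th dyadic cell of $X$.

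For $({\mathfrak B})$, all teeth carry the \emph{same} mass $p_m=m/a_1$. So at the single time $a_1$ one has $T\ge m$ on all cells $j\ge2$, whence $\P(T_{a_1}\ge m)\ge1-2^{-m}$. Yet the $\Sigma$-hump of cell $j$ lies in $[a_j,a_{j+1})$, these windows are disjoint, and so for each $N$ only one cell of probability $2^{-m}$ has $\Sigma_N$ large (Lemma \ref{Lem6.8z}).

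For $({\mathfrak C})$, dually, $p_ja_j^2=m\,a_{2^m}$ forces $\Sigma_{a_{2^m}}\ge m$ on every cell. Meanwhile $p_ja_{j-1}=\eps$ confines the $T$-hump of cell $j$ to $(a_{j-1},a_j)$ (Lemmata \ref{Lem6.6z}, \ref{Lem6.9z}).

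Your ``digit pattern'' framework for $({\mathfrak B})$ is the right setting. As you describe it, though, each scale is watched at its own time with mass tied to its own size, so both $T$ and $\Sigma$ stay localized. Then $\{T_N\ge C\}$ is again only the small set where the scale near $N$ is active, and (c) fails. You flagged reconciling the two requirements as ``the main obstacle''; that unresolved step is precisely the content of $({\mathfrak B})$ and $({\mathfrak C})$.
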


\subsubsection{Consequences in the General Setting}
\label{sec3.1}

 Let us  consider an {\it arbitrary} sequence $f_1, f_2, \cdots$ of measurable functions on a probability space $(\Omega, \F, \P)$ and introduce, by analogy with the notation   in subsection \ref{sec6.02},   the ${\cal T}-$measurable   quantities
   \begin{equation} 
\label{D6}
 T^{\,\ell h}_t (\eps):=t \cdot  \sup_{n \in \N} \, \P \big( \,\big|f_{n} \big| > t \, \big| \, {\cal T}\, \big) , \qquad \Sigma^{\,\ell h}_t (\eps):= \frac{1}{\,t\,} \cdot \sup_{n \in \N} \, \E \Big( f_{n}^{\,2} \cdot \mathbf{1}_{\{ |f_{n}  |  \le \eps t \}} \, \Big| \, {\cal T}\, \Big),
\end{equation}
\newpage
\begin{equation} 
\label{D7}
\frac{1}{\,2\,} \, \Big( \eps^2 \cdot T^{\,\ell h}_t (\eps) + \Sigma^{\,\ell h}_t (\eps) \Big) \le  R^{\,\ell h}_t (\eps):= \frac{1}{\,t\,} \cdot \sup_{n \in \N} \, \E \Big( \big( \,\big|f_{n} \big|  \wedge    \eps t \, \big)^2 \, \Big| \, {\cal T}\, \Big) \le  \eps^2 \cdot T^{\,\ell h}_t (\eps) + \Sigma^{\,\ell h}_t (\eps)\,, 
\end{equation}
\begin{equation} 
\label{D8}
 V^{\,\ell h}_t (\eps) \,:= \,\frac{1}{\,t\,} \cdot \sup_{n \in \N} \, {\rm Var} \Big( \,\big|f_{n} \big|  \cdot   \mathbf{1}_{ \{  |f_{n}  | \le \eps t \} }    \, \Big| \, {\cal T} \, \Big) \,;
\end{equation}
 here   the superscript ``$\,\ell \,h$" denotes   ``\,lacunary/hereditary", and ${\cal T}$ is the tail-$\sigma$-algebra in \eqref{A6a}.

 \medskip
\noindent
$\bullet~$      We  {\bf posit} now the following conditions,  analogues of those in Theorem   \ref{Thm6.1} (conditions {\bf (i)},\,{\bf (i)$^\prime$} below are identical to their ``namesakes" in Theorem 3.1):

  \medskip
     \noindent 
     {\bf (i)} {\it Lacunary/Hereditary WLLN:} There exist $\,{\cal T}-$measurable ``correctors" $D_1, D_2, \cdots\,$ with $\,\P \big( \big| D_N \big| \le N,\, \forall \, N \in \N \big)=1\,$ and a subsequence $\,f_{k_1}, f_{k_2}, \cdots\,  $  along which,   and along whose every subsequence,  
     \begin{equation} 
\label{D2}
 \lim_{N \to \infty} \bigg(  \frac{1}{\,N\,} \sum_{n=1}^N f_{k_n}   - D_N \bigg) =\,0 \quad \text{holds in} ~\P\text{\,--\,probability.}    
\end{equation}
          {\bf (i)$^\prime$} {\it Lacunary/Hereditary Conditional WLLN:} There exist $\,{\cal T}-$measurable ``correctors" $D_1, D_2, \cdots\,$ with $\,\P \big( \big| D_N \big| \le N,\, \forall \, N \in \N \big)=1\,$ and a subsequence $\,f_{k_1}, f_{k_2}, \cdots\,  $  along which,   and along whose every subsequence, we have for every $\eps >0 \,,$
                     \begin{equation} 
\label{D5}
 \lim_{N \to \infty} \P  \bigg( \,  \bigg| \frac{1}{\,N\,} \sum_{n=1}^N f_{k_n}   - D_N \bigg| > \eps  \, \bigg| \,{\cal T} \bigg) =\,0 \quad \text{in} ~~\P\text{\,--\,probability.}    
\end{equation}
{\bf (ii)} With the notation of \eqref{D6},  for every $\eps >0$ we have {\rm both} 
 \begin{equation} 
\label{6.12z}
\lim_{t \to \infty} T^{\,\ell h}_t (\eps) =0~~~\text{{\rm and}}~~ ~\lim_{t \to \infty} \Sigma^{\,\ell h}_t (\eps) =0\,,\quad \text{in $\,\,\P-$probability\,}.
\end{equation}
{\bf (ii)$^{\prime}$}   With the notation of \eqref{D6},  \eqref{D7}, for every $\eps >0$ we have   {\rm both} 
  \begin{equation} 
 \label{6.14z}
 \lim_{t \to \infty} T^{\,\ell h}_t (\eps) =0~~~\text{{\rm and}}~~ ~\lim_{t \to \infty} V^{\,\ell h}_t (\eps) =0\,,\quad \text{  in  $\,\,\P-$probability}.
 \end{equation}
 {\bf (ii)$^{\prime \prime}$} With the notation of  
\eqref{D7}, for every $\eps >0$ we have  \begin{equation} 
\label{6.12az}
\lim_{t \to \infty} R^{\,\ell h}_t (\eps) =0\,,\quad \text{in $\,\P-$probability}.
\end{equation}
{\bf (iii)} With the notation of \eqref{D6},   for every $\eps >0$ we have 
 \begin{equation} 
\label{6.16az}
\lim_{M \to \infty} \frac{1}{M} \int_0^M   T^{\,\ell h}_t (\eps) \, \ud t \,=\,0\,,\quad \text{in $\,\P-$probability}.
\end{equation}
{\bf (iii)$^\prime$} With the notation of \eqref{D6},   for every $\eps >0$ we have 
 \begin{equation} 
\label{6.16az}
\lim_{M \to \infty} M \int_M^\infty   \frac{\,\Sigma^{\,\ell h}_t (\eps)\,}{t^2} \, \ud t\,=\,0\,,\quad \text{in $\,\P-$probability}.
\end{equation}
{\bf (iii)$^{\prime \prime}$}   With   the notation of \eqref{D8},   for every $\eps >0$ we have 
 \begin{equation} 
\label{6.16az}
\lim_{M \to \infty} M \int_M^\infty   \frac{\,V_t^{\,\ell h} (\eps)\,}{t^2} \, \ud t\,=\,0\,,\quad \text{in $\,\P-$probability}.
\end{equation}

\smallskip
It is very tempting  at this point,  to conjecture that all these conditions above are equivalent.    {\it    The counterexamples of   Proposition   \ref{Prop6.3}  demonstrate, however, that this conjecture is false.} While {\bf (i)} $ \Leftrightarrow$  {\bf (i)$^\prime$} still holds in the above list, Proposition \ref{Prop6.3}\,(${\mathfrak A}$) shows that {\bf (i)}  does not imply any of the conditions \eqref{6.12z}--\eqref{6.16az} of the above list. 

\smallskip
In the reverse direction,  however,  {\it each   of \eqref{6.12z}--\eqref{6.16az}  is strong enough to imply both {\bf (i)},\,{\bf (i)$^\prime$}.}   Indeed, after passing to a determining subsequence $\,f_{k_1}, f_{k_2}, \cdots\,$ of $\, f_1, f_2, \cdots\,$ with associated exchangeable    $g_1, g_2, \cdots\,$ as in (\ref{A7}) below, the assumptions  \eqref{6.12z}--\eqref{6.16az} on the $f_1, f_2, \cdots $ imply the corresponding properties for the sequence $g_1, g_2, \cdots\,$. Hence, by Theorem   \ref{Thm6.1}, the   $\,g_1, g_2, \cdots\,$ satisfy the WLLN, thus also condition {\bf (ii)}  of Theorem  \ref{Thm3.2}. This shows the following. 

\begin{corollary} 
{\bf Sufficient Conditions:} 
Any one of the conditions  \eqref{6.12z}--\,\eqref{6.16az} implies both the Lacunary/Hereditary WLLN \eqref{D2},  and the Lacunary/Hereditary Conditional WLLN \eqref{D5}.  
\end{corollary}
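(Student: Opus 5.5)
The plan follows the route sketched just before the statement: reduce to the exchangeable case through \textsc{Aldous}'s subsequence theory, then invoke Theorem \ref{Thm6.1} and Theorem \ref{Thm3.2}.

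\emph{Step 1: extract an exchangeable model.} I first check that each of \eqref{6.12z}--\eqref{6.16az} makes the sequence $f_1, f_2, \cdots$ bounded in probability. For instance, \eqref{6.12z} gives $\P(|f_n|>t \,|\, {\cal T}) \le T^{\ell h}_t(1)/t$ uniformly in $n$, and $T^{\ell h}_t(1) \to 0$ in probability. Taking expectations of $1\wedge$ this bound and letting $t\to\infty$ yields $\sup_n \P(|f_n|>t)\to 0$. The averaged conditions (iii)--(iii)$''$ behave the same way, because they force the integrand to be small at some scale comparable to $M$.

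By \textsc{R\'enyi}'s result I then pass to a determining subsequence $f_{k_1}, f_{k_2}, \cdots$ with limit random law ${\bm\mu}$ as in \eqref{B0}. By \textsc{Aldous}, I realize an exchangeable sequence $g_1, g_2, \cdots$ that is conditionally I.I.D.\ with directing measure ${\bm\mu}$ given the tail. I take the tail of $(g_n)$ to be identified with ${\cal T}$, or at least with a $\sigma$-algebra on which ${\bm\mu}$ is measurable.

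\emph{Step 2: transfer the condition to $(g_n)$.} This is the step I expect to be the main obstacle. Stable convergence \eqref{B0} gives, for $B\in\F$ and continuity points,
\[
\E\big[{\bm\mu}(\{|x|>\eps t\})\,\mathbf 1_B\big] = \lim_n \P\big(|f_{k_n}|>\eps t,\,B\big) \le \E\big[\mathbf 1_B \sup_n \P(|f_n|>\eps t\,|\,{\cal T})\big]
\]
when $B\in{\cal T}$. Hence $T_t(\eps)\le T^{\ell h}_t(\eps)$ almost surely, first at the points of ${\bm D}$ and then for all $t$ by monotonicity with a small change of $\eps$.

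For $\Sigma$, $R$ and $V$ I use the bounded continuous truncations $x\mapsto(|x|\wedge \eps t)^2$, or continuous approximations of $x^2\mathbf 1_{\{|x|\le\eps t\}}$ from below. Stable convergence of $\E(\varphi(f_{k_n})\mathbf 1_B)$ to $\E\big(\mathbf 1_B\int\varphi\,\ud{\bm\mu}\big)$ then gives $R_t(\eps)\le R^{\ell h}_t(\eps)$, together with the analogous bound for $\Sigma$ up to a change of $\eps$.

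The variance in $V^{\ell h}$ involves $|f_n|$ and a conditional mean squared, which is not monotone under limits. To avoid it, I plan to reduce (ii)$'$ and (iii)$''$ to the $R$-type conditions. I intend to do this by the same reasoning as in Theorem \ref{Thm6.1}, showing that on $\{T\text{ small}\}$ the squared-mean correction $\frac1t\Delta_t^2$ is negligible, so that $V$-smallness gives $\Sigma$-smallness. Failing that, I use lower semicontinuity of $\int\varphi\,\ud{\bm\mu}$ only for the second-moment term and bound the mean term separately. The integrated conditions (iii)--(iii)$''$ transfer by Fatou/monotone integration of these pointwise inequalities in $t$.

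\emph{Step 3: conclude.} Once $(g_n)$ satisfies the corresponding condition among \eqref{6.12}--\eqref{6.16a}, Theorem \ref{Thm6.1} gives the WLLN for $(g_n)$ with correctors $\Delta_N$. This is exactly \eqref{B00} for ${\bm\mu}$: in probability, $N{\bm\mu}(\R\setminus[-N,N]) = T_N \to 0$ and $\frac1N\int_{[-N,N]}x^2\,\ud{\bm\mu} = \Sigma_N \to 0$. So condition (ii) of Theorem \ref{Thm3.2} holds for the determining subsequence, and Theorem \ref{Thm3.2} yields both \eqref{D2} and \eqref{D5}.
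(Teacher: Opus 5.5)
Your route is the paper's own: a determining subsequence, \textsc{Aldous}'s exchangeable model, transfer of the hypothesis to ${\bm\mu}$, then Theorems \ref{Thm6.1} and \ref{Thm3.2}. For $T$, $\Sigma$, $R$ and the integrated forms (iii), (iii)$'$ your transfer is sound. It uses Portmanteau for the open set $\{|x|>\eps t\}$, lower semicontinuity of $x^2\mathbf 1_{\{|x|<\eps' t\}}$, and comparison over $B\in{\cal T}$, which is legitimate because ${\bm\mu}$ is ${\cal T}$-measurable.

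The gap is in the variance conditions, and it is not merely technical. An upper bound on conditional variances given ${\cal T}$ yields no upper bound on the variance under ${\bm\mu}$: ${\cal T}$ can be so large that every $f_n$ is ${\cal T}$-measurable, forcing $V^{\ell h}\equiv 0$ while ${\bm\mu}$ is nondegenerate. Your first option (controlling $\frac1t\Delta_t^2$ on $\{T \text{ small}\}$) lives at the level of the $g_n$, so it presupposes the very transfer that fails. Your fallback is the right idea for (ii)$'$, provided you bound the mean term for the $f_n$ themselves. Conditional Cauchy--Schwarz gives
\[
\frac1t\Big(\E\big(|f_n|\,\mathbf 1_{\{|f_n|\le \eps t\}}\,\big|\,{\cal T}\big)\Big)^2\,\le\,\frac{2M^2}{t}+2\,\P\big(|f_n|>M\,\big|\,{\cal T}\big)\,\Sigma^{\ell h}_t(\eps).
\]
Since $T^{\ell h}_M\to 0$ in probability, the event $\{\sup_n\P(|f_n|>M\,|\,{\cal T})\le 1/4\}$ has probability close to one for large $M$, and on it you get $\Sigma^{\ell h}_t(\eps)\le 2V^{\ell h}_t(\eps)+4M^2/t$. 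Thus (ii)$'$ implies (ii) already for the $f_n$, and your transfer of (ii) finishes.

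For (iii)$''$ there is no such tightness input, and no argument can supply one, because the implication is false. Take the following example.
\begin{itemize}
\item Let $Z_1,Z_2,\ldots$ be I.I.D., symmetric and integer-valued, with $t\,\P(|Z_1|>t)\not\to 0$.
\item Let $U:=\Phi(Z_1,Z_2,\ldots)\in[0,1)$ for a Borel injection $\Phi$ of $\mathbb{Z}^{\N}$ with Borel inverse.
\item Set $f_n:=Z_n+U/3$.
\end{itemize}
Since $Z_n=\lfloor f_n\rfloor$ and $U=3(f_n-\lfloor f_n\rfloor)$, we have ${\bm\sigma}(f_n)={\bm\sigma}(U)={\cal T}$ for every $n$. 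So each $f_n$ is ${\cal T}$-measurable, $V^{\ell h}_t(\eps)\equiv 0$, and (iii)$''$ holds trivially.

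Yet suppose \eqref{D2} held along some $(f_{k_n})$ with fixed correctors $D_N$. Applying it to the even- and odd-indexed sub-subsequences and subtracting gives $\frac1N\sum_{n\le N}(Z_{k_{2n}}-Z_{k_{2n-1}})\to 0$ in probability. Theorem \ref{Thm6.2} forbids this: for independent copies $Z,Z'$ and large $t$, $t\,\P(|Z-Z'|>t)\ge \frac{t}{2}\,\P(|Z|>2t)\not\to 0$. The same device with $f_n:=n+U/3$, $U$ uniform, shows that (iii)$''$ does not even give boundedness in probability, contrary to your Step 1.

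So your argument, like the paper's one-line transfer claim, establishes the corollary for (ii), (ii)$'$, (ii)$''$, (iii) and (iii)$'$. Condition (iii)$''$ must be dropped or strengthened, e.g.\ by adding $T^{\ell h}_t(\eps)\to 0$ in probability.
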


\section{The Proof  of Theorem  \ref{Thm2.1} 
}
\label{sec4}

We start with a  straightforward observation.

 \begin{lemma} 
  \label{Lem4.1}
  Suppose that the real-valued, measurable   $\,f_1, f_2, \cdots$ contain  a subsequence   $\,f_{k_1}, f_{k_2}, \cdots\,$ along which,     and along whose every    further  subsequence, \eqref{D2a} holds for some real-valued   $\,D_1, D_2, \cdots\,$     measurable with respect to the tail $\sigma-$algebra ${\cal T}$ of \eqref{A6a}. 
  
  Then we may assume $($passing to a further  subsequence$)$ that $\,f_{k_1}, f_{k_2}, \cdots\,$   is bounded in probability.
   \end{lemma}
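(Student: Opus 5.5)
The plan is by contradiction, combining stable convergence with the $\mathcal T$-measurability of the correctors. Write $g_n := f_{k_n}$. Map $\R$ homeomorphically onto $(-\pi/2,\pi/2)$ by $\arctan$, so that the sequence $(\arctan g_n)$ is automatically bounded. By the result of \textsc{R\'enyi} recalled around \eqref{B0}, some subsequence is determining, with a limiting random distribution $\bar{\bm\mu}_\omega$ on the compactified line $[-\infty,\infty]$. This $\bar{\bm\mu}_\omega$ is measurable with respect to the tail $\sigma$-algebra.

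If $\bar{\bm\mu}_\omega(\{\pm\infty\})=0$ for $\P$-a.e.\ $\omega$, then this determining subsequence is bounded in probability, and the lemma is proved. So assume the contrary. Then there exist $\eta>0$ and a set $B$ with $\P(B)>0$ such that $\bar{\bm\mu}_\omega(\{\pm\infty\})\ge \eta$ and $\bar{\bm\mu}_\omega(\R)\ge\eta$ for $\omega\in B$. If $\bar{\bm\mu}_\omega$ were concentrated entirely at $\pm\infty$, the argument below would be run with a small finite threshold in place of the finite part; I would handle that case separately but in the same way. Because the hypothesis is hereditary, it passes to this determining subsequence, which I relabel $g_1,g_2,\cdots$.

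The key device is to build two subsequences $h,h'$ of $(g_n)$ that coincide at every position except a sparse set of positions $N_1<N_2<\cdots$. At position $N_j$, $h$ uses $g_{a_j}$ and $h'$ uses $g_{b_j}$, with $a_j<b_j$ chosen far out and far apart. Both tails are contained in $\mathcal T$, so the corresponding correctors $D_N,D'_N$ (possibly different for $h$ and $h'$) are $\mathcal T$-measurable. Subtracting the two hereditary WLLN statements \eqref{D2a} at $N=N_j$, all common terms cancel, and we obtain
\[
\frac{g_{a_j}-g_{b_j}}{N_j} \;-\;\big(D_{N_j}-D'_{N_j}\big)\;\longrightarrow\;0\quad\text{in $\P$-probability.}
\]
Thus a quantity $W_j:=D_{N_j}-D'_{N_j}$, measurable with respect to $\mathcal T$, tracks $(g_{a_j}-g_{b_j})/N_j$.

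Next I invoke stable convergence for pairs. This is \textsc{Aldous}'s extension, or an iterated \textsc{R\'enyi} argument. Choosing $b_j\gg a_j$ along a suitably sparse sequence, the pair $(g_{a_j},g_{b_j})$ converges stably, conditionally on $\mathcal T$, to an independent pair with common law $\bar{\bm\mu}_\omega$. Fix a level $K$ with $\bar{\bm\mu}_\omega([-K,K])\ge\eta/2$ on $B$ (shrinking $B$ if necessary). Choose $a_j,b_j$ so large that $\P(|g_{a_j}|>jN_j\,|\,\mathcal T)$ and $\P(|g_{b_j}|>jN_j\,|\,\mathcal T)$ are close to $\bar{\bm\mu}(\{\pm\infty\})\ge\eta$ on $B$. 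Then, with conditional probability at least about $\eta^2/2$ on $B$, each of the following two events occurs:
\[
E_j:=\big\{|g_{a_j}|>jN_j,\ |g_{b_j}|\le K\big\},\qquad E'_j:=\big\{|g_{b_j}|>jN_j,\ |g_{a_j}|\le K\big\}.
\]
On $E_j$ and on $E'_j$ we have $|g_{a_j}-g_{b_j}|/N_j\ge j-K/N_j\to\infty$. This by itself gives no contradiction, since $|W_j|$ may be large. The contradiction must come from the sign, so I refine the events by sign. Since $\bar{\bm\mu}_\omega(\{+\infty\})+\bar{\bm\mu}_\omega(\{-\infty\})\ge\eta$, one of the two atoms has mass at least $\eta/2$ on a subset of $B$ of positive measure; say $+\infty$. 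Replace $E_j,E'_j$ by $\{g_{a_j}>jN_j,\ |g_{b_j}|\le K\}$ and $\{g_{b_j}>jN_j,\ |g_{a_j}|\le K\}$. On the first event $(g_{a_j}-g_{b_j})/N_j\to+\infty$, and on the second it tends to $-\infty$. Both events have conditional probability, given $\mathcal T$, bounded below on a set of positive measure. But $W_j$ is $\mathcal T$-measurable, so given $\mathcal T$ it is a constant, and the displayed convergence forces it to be simultaneously near $+\infty$ and near $-\infty$ with conditional probabilities bounded away from zero. This is impossible, and the contradiction proves the lemma.

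The main obstacle is the step just described. It requires the conditional, stable independence of the pair $(g_{a_j},g_{b_j})$ given $\mathcal T$, uniformly enough along the diagonal choice of $N_j,a_j,b_j$ that the in-probability convergence and the conditional lower bounds hold at the same indices. I would obtain this by choosing $N_j$ first, then $a_j$ and $b_j$ inductively, and using \eqref{B0} on the compactified line, tested against the $\mathcal T$-measurable sets $\{\bar{\bm\mu}(\{+\infty\})\ge\eta/2\}$. I would also need to verify that the hereditary hypothesis indeed supplies $\mathcal T$-measurable correctors for both $h$ and $h'$; this holds because their tail $\sigma$-algebras are contained in that of the original sequence.
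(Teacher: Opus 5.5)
There is a genuine gap, and it comes from how you read the hypothesis. You let the correctors depend on the subsequence ($D_N$ for $h$, $D'_N$ for $h'$). Under that reading the statement is false.

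If every $f_n$ is itself $\mathcal T$-measurable, then $D_N:=\frac1N\sum_{n\le N}f_{k_n}$ is an admissible $\mathcal T$-measurable corrector for every subsequence, so the hypothesis holds trivially. Two examples show this:
\begin{itemize}
\item $f_n=nZ$ with $Z$ standard normal. Here $\mathcal T=\sigma(Z)$, and $\bar{\bm\mu}_\omega=\delta_{\pm\infty}$ according to the sign of $Z$.
\item For your mixed case, $f_n=n\,d_n(Z)+2Z$, with $Z$ uniform on $[0,1)$ and $d_n(Z)$ its $n$-th binary digit. For $n\ge 3$ the ranges $[0,2)$ and $[n,n+2)$ are disjoint, so $f_n$ determines $Z$. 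Hence $\mathcal T=\sigma(Z)$ and $\bar{\bm\mu}_\omega=\frac12\delta_{2Z(\omega)}+\frac12\delta_{+\infty}$.
\end{itemize}
No subsequence of either sequence is bounded in probability.

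Correspondingly, your argument fails at two points.
\begin{itemize}
\item The case $\bar{\bm\mu}_\omega(\R)=0$ on a set of positive measure (the first example) is deferred. The sign trick needs finite mass, so it cannot be run ``in the same way'' there.
\item In the mixed case, stable convergence \eqref{B0} only gives convergence of $\P(E_j\cap C)$ for each \emph{fixed} $C$, i.e.\ weak convergence of $\P(E_j\,|\,\mathcal T)$. It does not give lower bounds for $\P(E_j\,|\,\mathcal T)$ and $\P(E'_j\,|\,\mathcal T)$ on a common set. Nor does it hold uniformly against the $j$-dependent sets $\{W_j\ge j-2\}$ that your contradiction requires. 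In the second example $E_j$ and $E'_j$ are disjoint sets in $\mathcal T$. So these conditional probabilities are just $\mathbf 1_{E_j}$ and $\mathbf 1_{E'_j}$, and $W_j$ can equal the difference of the two Ces\`aro means exactly. The ``main obstacle'' you flag is a false step, not a technicality.
\end{itemize}

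The missing idea is that the correctors $D_1,D_2,\cdots$ are fixed: the same for the subsequence and all its further subsequences, as in Theorem \ref{Thm3.2}\,(i). This is what the paper uses. With it, $W_j\equiv0$ in your scheme, and none of the stable-convergence machinery is needed.

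The paper argues directly. Suppose $f_{k_1},f_{k_2},\cdots$ is not bounded in probability, so $\sup_n\P(|f_{k_n}|>M)>\alpha$ for all $M$. Having chosen $f_{k_{n_1}},\dots,f_{k_{n_L}}$:
\begin{itemize}
\item take $M>2$ with $\P\big(\big|\frac1{L+1}\sum_{\ell\le L}f_{k_{n_\ell}}-D_{L+1}\big|>M/2\big)<\alpha/2$, which is possible because this is a single a.e.-finite random variable;
\item then take a later index with $\P\big(|f_{k_{n_{L+1}}}|>(L+1)M\big)>\alpha$.
\end{itemize}
This yields \eqref{A5}, i.e.\ $\P\big(\big|\frac1L\sum_{\ell\le L}f_{k_{n_\ell}}-D_L\big|>1\big)>\alpha/2$ for every $L$. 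That contradicts \eqref{D2a} along this subsequence with the same correctors.
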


   \noindent
       {\it Proof:}                  Assume the contrary; i.e., that $\,f_{k_1}, f_{k_2}, \cdots\,$ is {\it not} bounded in probability. Then  
\begin{equation} 
\label{A4}
\lim_{M \to \infty} \, \sup_{n \in \N} \,\P \,\big( \big|f_{k_n} \big| > M \big) > \alpha
\end{equation}
 holds for some $\alpha >0\,;$ and we   construct, inductively, a subsequence $f_{k_{n_1}}, f_{k_{n_2}}, \cdots\,$ with the property 
 \begin{equation} 
\label{A5}
  \P \bigg(\, \bigg| \,\frac{1}{\,L\,} \sum_{\ell=1}^L f_{k_{n_\ell}} - D_L\, \bigg| >1 \bigg) \, > \, \frac{\, \alpha\,}{2}\,, \qquad \forall~~ L \in \N\,.
\end{equation}  
This  will provide  the desired contradiction to the hereditary validity of \eqref{D2}.

Indeed, suppose that the terms $\,f_{k_{n_1}}, \cdots, f_{k_{n_L}}  \,$ have been chosen   to satisfy \eqref{A5}; selecting then $M>2\,$  in \eqref{A4} large enough,   we determine the index $\,k_{\,n_{L+1}}>k_{\,n_{L}}\,$ of the next term,  so that
$$
\P \bigg( \bigg| \frac{1}{L+1} \sum_{\ell=1}^{L+1} f_{k_{n_\ell}} - D_{L+1} \bigg| >\frac{\,M\,}{2} \bigg) \,  > \, \P \bigg( \frac{\,\big|f_{k_{\,n_{L+1}}} \big| \,}{L+1} > M   \bigg)  - \, \P \bigg( \bigg| \frac{1}{L+1} \sum_{\ell=1}^{L } f_{k_{n_\ell}} - D_{L+1} \bigg| >\frac{\,M\,}{2} \bigg)
 > \, \frac{\alpha }{\,2\,} 
$$
holds. Here, the  last displayed probability  is less than $\,\alpha/ 2\,$ for $M>2$ sufficiently large, because of  \eqref{D2}; whereas, after fixing such $M$, we can choose $\,k_{\,n_{L+1}}>k_{\,n_{L}}\,$ so that the next-to-last displayed   probability is bigger than $\alpha$, on the strength of  \eqref{A4}. 

Recalling   $M>2\,,$ we see that the inductive step   $ L \longmapsto L+1$   for \eqref{A5} has been established.    \qed

\subsection{Proof of Theorem  \ref{Thm2.1}\,(i)}
\label{4.1a}

This is a consequence of  the   \textsc{Koml\'os} \cite{K} result,  and of the fact that  the \textsc{Ces\`aro} limit $f_*$ can be defined  on the increasing sequence of sets $A_1, A_2, \cdots$ consistently, via diagonalization,  and in the spirit of the ``\textsc{Chacon} Biting Lemma" in \cite{BC1}.

More specifically, from the   \textsc{Koml\'os} theorem there exist for each $j \in \N$ a measurable   $f^{(j)}_* : \Omega \to \R$ and a subsequence $f^{(j)}_{k_1}, f^{(j)}_{k_2}, \cdots\,$ of $f_{k_1}, f_{k_2}, \cdots\,$, so that $\,f^{(j)}_*=\lim_{N \to \infty} \frac{1}{N} \sum_{n=1}^N f_{k_n}^{(j)}\, \mathbf{ 1}_{A_{j}}     \,$ holds $\,\P-$a.e. Clearly,   the $f^{(j+1)}_{k_1}, f^{(j+1)}_{k_2}, \cdots\,$ can be selected as a subsequence of $f^{(j)}_{k_1}, f^{(j)}_{k_2}, \cdots\,;$ and   $f^{(j+1)}_* = f^{(j)}_*$ holds $\P-$a.e.\,on $A_j$. We define now, consistently, 
$$
 f_* (\omega) :=  f^{(j)}_* (\omega)\,, ~~~ \omega \in A_j\,; \qquad  f_* (\omega) := 0\,, ~~~ \omega \in \Omega \setminus A
$$
with $A := \bigcup_{j \in \N} A_j\,$ a set of full measure $\P(A)=1,$ and notice that for each fixed  $j \in \N$ we have
$$
 f_* (\omega)  =  f^{(j)}_* (\omega)=\lim_{N \to \infty} \frac{1}{N} \sum_{n=1}^N f_{k_n}^{(j)}(\omega)=\lim_{N \to \infty} \frac{1}{N} \left(\sum_{n=1}^j f_{k_n}^{(j)}(\omega) +\sum_{n=j+1}^N f_{k_n}^{(j)}(\omega) \right)
$$
\begin{equation} 
\label{A15}
~~~~~~~~~~~~~~~~~=\lim_{N \to \infty} \frac{1}{N}  \sum_{n=j+1}^N f_{k_n}^{(n)}(\omega) =\lim_{N \to \infty} \frac{1}{N}  \sum_{n= 1}^N f_{k_n}^{(n)}(\omega)\,, \qquad \omega \in A_j\,.
\end{equation} 
The fourth equality  recalls that $\, \big( f_{k_n}^{(n)} \big)_{n>j}\,$ is a subsequence of $\, \big( f_{k_n}^{(j)} \big)_{n \in \N}\,$, and the hereditary nature of the \textsc{Ces\`aro}-mean convergence in the  \textsc{Koml\'os} theorem.   The first and last terms  of \eqref{A15} do not involve $j$,  thus agree on the set of full measure $A= \bigcup_{j \in \N} A_j\,$. This is the   \textsc{Ces\`aro}-mean convergence along the ``diagonal" subsequence $\, \big( f_{k_n}^{(n)} \big)_{n \in \N}\,;$ its hereditary character  is clear.  \qed

\subsection{Proof of Theorem \ref{Thm2.1}\,(ii)}
\label{4.1b}

\noindent
{\bf STEP 1: Determining Subsequence.} On the strength of the     boundedness-in-probability established in Lemma \ref{Lem4.1}, we may assume      (passing to a further subsequence, if needed)  the   subsequence   $f_{k_1}, f_{k_2}, \cdots\,$ to be   ``determining"    in the sense,  and with the notation,  of    \eqref{B0}, \eqref{A6a}.

   \smallskip
\noindent
{\bf STEP 2: Approximation by an Exchangeable Sequence.}    Following the trail blazed by \textsc{Aldous} \cite{A}-\cite{AE} (cf.\,\textsc{Dacunha-Castelle}\,\cite{DC};\,\cite{BerPet}), we summon now an {\it exchangeable}  sequence of   functions $\,  g_1, g_2, \cdots\,,$ conditionally   independent and with the    random probability  measure $\omega \mapsto {\bm \mu}_\omega$\,(cf.\,\eqref{B0} and the paragraph below it)   as their common conditional distribution    given  the tail-$\sigma$-algebra 
\begin{equation} 
\label{A11}
{\cal T}_*  : = \bigcap_{n \in \N}   {\bm \sigma} \big(  g_{{n}},  g_{{n+1}} , \cdots \big) \, \subseteq  \, {\cal T}\,;
\end{equation}
 and also such that, passing to a further subsequence of the determining $\,f_{k_1}, f_{k_2},\cdots\,$  if necessary, 
 \begin{equation} 
\label{A7}
 \big( f_{k_{n+1}}, \cdots, f_{k_{n+M}} \big) ~~ \text{converges in distribution as $ n \to \infty\,$ to} ~~ \big( g_1, \cdots, g_M \big)~~
\end{equation}
 for all $M \in \N\,$ and     $\,M-$tuples $(k_{n+1} , \cdots, k_{n+M})$  with $\,k_{n+1} < \cdots < k_{n+M}\,,$   $\, \lim_{n \to \infty} k_{n+1} = \infty\,.$   
 
  It follows now from Proposition \ref{Prop3.1}(ii),      that 
  the assumed hereditary $\P-$a.e.\,validity of \eqref{2}, for some  
   $f_* \in \Ll^0\,,$ implies    
   the $\P-$a.e.\,finiteness  
\begin{equation} 
\label{A9}
\P \big(  {\bm \kappa} < \infty \big) =1\quad \text{for the function} \quad \Omega \ni \omega \, \longmapsto \, {\bm \kappa} (\omega) := \int_\R \big| x \big| \, {\bm \mu}_\omega ( \ud x   ) \in [0, \infty]\,.
\end{equation}    
{\bf STEP 3: The condition (1.3).} On the strength of \eqref{A9},  we find for any given    $ \varepsilon \in (0,1)$ some $  K \in (0, \infty)$ such that the set $B:= \{ {\bm \kappa}  \le K \} \in  {\cal T}$ has measure $ \P (B)   > 1 - \varepsilon.$     The sequence $f_{k_1}, f_{k_2}, \cdots $ is determining, so        
$\,f_{k_1}\, \mathbf{1}_{B}\,, \,f_{k_2}\, \mathbf{1}_{B}\,, \cdots\,$ converges in distribution to a random variable $\xi$ with   distribution ${\bm \nu}$ and first absolute  moment bounded by $K$ 
(this   holds even conditionally on ${\cal T}$).   

 Then, under very mild   conditions on the underlying space  $(\Omega, \F),$ which we can certainly assume here, there exist a (relabelled) subsequence $f_{k_1}, f_{k_2}, \cdots\,,$ as well as a sequence of random variables $\xi_1, \xi_2, \cdots\,$ with common  distribution ${\bm \nu}\,,$ such that $ \, \lim_{n \to \infty} \big( f_{k_n} \mathbf{1}_{B} - \xi_n \big)=0\, $ holds $\P-$a.e.\,(in the spirit of the \textsc{Skorokhod}, a.k.a.\,\,\textsc{Strassen-Dudley}, construction in\,\cite{Bill},\,\,p.\,70; a more direct reference is Lemma 4, p.\,41 of \cite{ABT}). But now \textsc{Egorov}'s theorem gives  a subset   $A \in {\cal T}$ of $B$ with $ \P (A) > 1 - 2 \varepsilon   $ and a  (further, relabelled) subsequence with $  \xi $   bounded on $A$, $\, \sup_{\omega \in A} \big| f_{k_n} (\omega) - \xi_n (\omega) \big| \to 0 \,$ as $n \to \infty\,,$ and 
 $\,
 \sup_{n \in \N}\, \big\| \, f_{k_n} \mathbf{1}_{A}\,\big\|_{\Ll^1 (\P)} \,<\, K+1\,.
 $

 \smallskip
Iterating this procedure,  we  come up with   
real numbers $\, 0 < K_1 <K_2< \cdots < K_j \uparrow \infty\,$    and    \textsc{Egorov}-type sets $\, A_1 \subseteq A_2 \subseteq \cdots \subseteq A_j \subseteq \cdots \,$  in ${\cal T},$  such that:  \\ 
  $(i)$ ~for each $j \in \N$,  we have     the bounds 
  $\,
\P \big( A_j \big) \ge 1 - 2^{\,-j}\,,$      $\, {\bm \kappa} (\omega) \le K_j ~ \text{for $\,\P-$a.e.}~ \omega \in A_j  \,  ;
  $
 and  \\ $(ii)$\, the  sequence  $\, \big(   f_{k_{n }}   \cdot \mathbf{ 1}_{ A_j} \big)_{n \in \N} \,$ is bounded in $\, \Ll^1 (\P)\,,$ as claimed in \eqref{A1}. \qed

 \smallskip
     \begin{remark} 
  {\rm
The inclusion $\,{\cal T}_*    \subseteq    {\cal T}\,$ in \eqref{A11} follows from the study of ``almost exchangeable"  sequences   in   \cite{BerRos}, most notably its    Proposition 2.3.   {\it  This inclusion can, very easily, be strict.}

Indeed, consider the sequence of uniformly bounded functions $\, f_n := (1/n) \cdot \mathbf{1}_{[0,1/2]}\,, \, \,n \in \N\,$ on $([0,1], {\cal B}, {\bm \lambda})$. This sequence is determining, and the associated exchangeable functions $g_1 \equiv 0,\, g_2 \equiv 0,\cdots$  have trivial tail $\sigma-$algebra ${\cal T}_*\,$; yet the  tail $\sigma-$algebra ${\cal T}$  of the $f_1, f_2, \cdots$   contains the interval (1/2, 1]. All   results below continue to hold, if conditioning on ${\cal T}_*\,$ is replaced by conditioning on ${\cal T}\,.$
}
    \end{remark}

\subsection{Proof of Theorem \ref{Thm2.1}\,(iii)}
\label{4.1c}

  Assuming the validity of \eqref{A1} for some subsequence $f_{k_1}, f_{k_2}, \cdots\,$   and    sets $\,A_1 \subseteq A_2 \subseteq \cdots \subseteq A_j \subseteq \cdots\,$ in $\,\F$ with $\,\lim_{j \to \infty} \P (A_j ) =1\,,$ we define a new probability measure $\,\Q \sim \P\,$ via
\begin{equation} 
\label{A10}
 \frac{\ud \Q\,}{\ud \P} \, \big( \omega   \big) \,:= \, \bigg( \sum_{j \in \N} \alpha_j \, \mathbf{1}_{A_{j}} ( \omega) \bigg) \bigg/ \,   \sum_{j \in \N} \alpha_j \, \P (A_{j})  \,,
\end{equation}
where the numbers $\, \alpha_j \in (0,1)\,$ satisfy  $ \,  \alpha_j \,\cdot \, \sup_{n \in \N} \big\| \big( 1 + \big|f_{k_n} \big| \big) \, \mathbf{1}_{A_{j}} \big\|_{\Ll^1 (\P)}  \le (1/ j^2)\,, ~ \forall  ~ j \in \N\,;$    thus,    $\, \sup_{n \in \N} \big\| f_{k_n}   \big\|_{\Ll^1 (\Q)} < \infty\,$ follows. Whereas,  if   $\, \sup_{n \in \N} \big\| f_{k_n}   \big\|_{\Ll^1 (\Q)} < \infty\,$ holds for some probability measure $\,\Q \sim \P\,,$ then so does   \eqref{A1}   for the sets $\, A_j := \big\{  \big( \ud \P /  \ud \Q \big) \le j\big\}\,,~~ j \in \N\,.$ \qed

\section{Linking the Lacunary/Hereditary Case with Exchangeability}
\label{sec5}

We state  now and prove a   result    of  considerable  independent interest,    crucial for proving   Theorems   \ref{Thm2.1}, \ref{Thm3.2}. It comes in two versions, one for the Weak,   the other for the  Strong, Law of Large Numbers.

  \begin{proposition} 
  \label{Prop3.1}
  On a given probability space $\,(\Omega, \F, \P)$, consider a sequence of real-valued, measurable functions $f_1, f_2, \cdots$   containing a    subsequence  $\,f_{k_1}, f_{k_2}, \cdots \,$ with the  stable convergence    \eqref{B0}.   

\smallskip
\noindent
{\bf (i)}  Suppose  that   $\,\lim_{N \to \infty} \big(  \frac{1}{\,N\,} \sum_{n=1}^N f_{k_n}   - D_N \big) =\,0 \,~ \text{holds in} ~\P\text{\,--\,probability}$ for {\rm some}   $\,{\cal T}-$measurable correctors    $\,D_1, D_2, \cdots\,,$ along this subsequence  $\,f_{k_1}, f_{k_2}, \cdots\,$ and    all its subsequences.  Then,    the approximating exchangeable  functions    $\,g_1, g_2, \cdots\,$  of \eqref{A7}      also satisfy
 \begin{equation} 
 \label{6.10}
   \lim_{N \to \infty} \bigg(  \frac{1}{\,N\,} \sum_{n=1}^N g_{n}   -   D_N \bigg) =\,0 \quad \text{in} ~\P\text{\,--\,probability}  \end{equation}
   with  the  same  $\,{\cal T}-$measurable      $\,D_1, D_2, \cdots\,,$ as well as with the ${\cal T}_*-$measurable $\, \Delta_1, \Delta_2, \cdots\,$  of \eqref{6.2}.

\smallskip
\noindent
{\bf (ii)}  In the same setting,   suppose that the subsequence $\,f_{k_1}, f_{k_2}, \cdots,$  and all its subsequences,  satisfies  $\,\lim_{N \to \infty}    \frac{1}{\,N\,} \sum_{n=1}^N f_{k_n}    = 0 \,,~   \P-\text{a.e.}$ Then the   exchangeable    $\,g_1, g_2, \cdots\,$  of \eqref{A7}     also have this property:   
 \begin{equation} 
 \label{D2bb}
  \lim_{N \to \infty}    \frac{1}{\,N\,} \sum_{n=1}^N g_{n}  \,   =\,0 \,,~\quad  \P-\text{a.e.}    
 \end{equation} 
    \end{proposition}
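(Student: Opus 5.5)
The plan is to transfer the hereditary laws of large numbers from the $f_{k_n}$ to the $g_n$ through the finite-dimensional convergence \eqref{A7}, reinforced to \emph{stable} convergence jointly with $\mathcal T$-measurable functions. By R\'enyi's theory and the construction of \cite{A,AE}, we may pass to a further subsequence so that \eqref{A7} holds stably. This means that for every bounded $\mathcal T$-measurable $Z$ (in particular $Z=D_N$ or $\mathbf 1_B$, $B\in\mathcal T$), the vector $(f_{k_{n+1}},\dots,f_{k_{n+M}},Z)$ converges in law to $(g_1,\dots,g_M,Z)$. Here the $g$'s are built on an extension of $(\Omega,\F,\P)$ so that, conditionally on $\F$ (indeed on $\mathcal T$), they are i.i.d.\ with law ${\bm\mu}_\omega$. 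This joint convergence is the key device. Since the event $\{|\frac1N\sum_{n\le N} x_n - z|>\eps\}$ is open in $(x_1,\dots,x_N,z)$, the portmanteau theorem gives lower semicontinuity of the probabilities.

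\textbf{Part (i).} Fix $\eps>0$ and suppose, towards a contradiction, that $\P(|\frac1N\sum_{n\le N}g_n - D_N|>\eps)>\eta$ along infinitely many $N$. I will build inductively a subsequence $k_{n_1}<k_{n_2}<\cdots$ and indices $N_1<N_2<\cdots$ with $N_{j+1}\gg N_j$, using blocks of length $N_j$ placed far out. By stable convergence, for $m$ large the block $(f_{k_{m+1}},\dots,f_{k_{m+N}})$ has $\P(|\frac1N\sum f - D_N|>\eps)>\eta$. The earlier terms contribute only a fraction $N_{j-1}/N_j$ of the average. To control their size, I will use boundedness in probability, which follows from determinacy or from Lemma \ref{Lem4.1}: their contribution, divided by $N_j$, is $<\eps/2$ with probability close to one. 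The resulting subsequence then violates \eqref{6.10} along $N_j$, contradicting the hypothesis. Hence \eqref{6.10} holds with the $D_N$.

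To replace $D_N$ by $\Delta_N$, I will work conditionally on $\mathcal T$, under which the $g_n$ are i.i.d. Convergence in probability of $\frac1N\sum g_n - D_N$ gives, along subsequences, the conditional WLLN a.s.\ with constant correctors $D_N(\omega)$. The classical Theorem \ref{Thm6.2} (direction (i)$\Rightarrow$(ii)$\Rightarrow$ correctors $\delta_N$) applied $\omega$-wise then yields convergence with $\Delta_N$. This step needs a subsequence-of-subsequence argument, since the hypothesis holds only in probability and Theorem \ref{Thm6.2} is stated along all $N$. I expect this to be the delicate point. I would handle it by showing that $|D_N-\Delta_N|\to0$ in probability: the conditional laws of the centred sums concentrate, and two concentrated conditional medians must agree. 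This reduces the claim to the conditional medians, which is routine.

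\textbf{Part (ii).} For almost-everywhere convergence I will use the same block construction with maximal events, $\P(\sup_{N_j\le N\le N'_j}|\frac1N\sum_{n\le N} g_n|>\eps)>\eta$ for some finite windows; such windows exist if \eqref{D2bb} fails. These are again open conditions on finitely many coordinates, so they transfer to blocks of the $f_{k_m}$. Concatenating such blocks with rapidly growing lengths yields a subsequence along which the window events have probability $>\eta/2$ infinitely often. By reverse Fatou, $\limsup_N|\frac1N\sum f|\ge\eps/2$ then holds on a set of positive measure, contradicting the hereditary a.e.\ convergence. The main obstacle in both parts is controlling the prefix terms uniformly; boundedness in probability and the choice $N_j/N_{j-1}\to\infty$ handle it.
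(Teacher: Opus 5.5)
Your block-gluing contradiction, for the $D_N$-part of (i) and for (ii), is the paper's own argument. You concatenate far-out blocks of the $f_{k_n}$ whose laws are close to that of $(g_1,\dots,g_N)$, with lengths growing so fast that earlier blocks are negligible; in (ii) you use maximal events over finite windows together with reverse Fatou. Requiring the blocks to converge \emph{stably}, jointly with ${\cal T}$-measurable $Z$, is an improvement rather than a detour. The events being transferred contain $D_N$, and the paper's \eqref{B4a} compares only the laws of the blocks themselves. There is one bookkeeping slip, which the paper's chain of inequalities shares. In the glued subsequence, the hypothesis at the end of block $j$ involves the normalisation $M_j=N_1+\dots+N_j$ and the corrector $D_{M_j}$, not $D_{N_j}$. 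To fix this, take $M_j$ itself from the set of bad indices, with $M_j\gg M_{j-1}$, and put $N_j:=M_j-M_{j-1}$. Then use that the $g_n$ are conditionally i.i.d.\ given $\F$ to replace $(g_{M_{j-1}+1},\dots,g_{M_j},D_{M_j})$ by $(g_1,\dots,g_{N_j},D_{M_j})$ before transferring the corresponding open event (normalised by $M_j$, with $z=D_{M_j}$).

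The genuine gap is the replacement of $D_N$ by $\Delta_N$; the paper's proof of this proposition does not spell out that clause either. Your median step correctly gives $D_N-m_N\to0$ in probability, where $m_N$ is a conditional median of $\frac1N\sum_{n\le N}g_n$ given $\F$. But $m_N-\Delta_N\to0$ is not routine. It is exactly the necessity half of the Feller theory: averages that concentrate about \emph{some} constant concentrate about the truncated mean. The remark that ``two concentrated medians agree'' is circular here, since concentration about $\Delta_N$ is what has to be shown. Nor does Theorem \ref{Thm6.2}, stated along all $N$, apply $\omega$-wise, as you yourself note.

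What closes the gap is a fixed-$N$ form of that theory. The inequalities behind Proposition \ref{Prop6.6} (Lemmas \ref{Lem6.8}, \ref{Lem6.9}, where the symmetrisation step giving \eqref{6.34} works for any constant centering) and the Chebyshev bound \eqref{6.21} hold for every $N\ge N_0$, with $N_0$ depending only on the law of $h_1$. Apply them conditionally on $\F$, with $h_1\sim\bm\mu_\omega$ and centering $D_N(\omega)$. The threshold $N_0(\bm\mu_\omega)$ is a.e.\ finite, and integrating the resulting conditional bound yields $\P\big(|\frac1N\sum_{n\le N}g_n-\Delta_N|>\eps\big)\to0$ with no subsequence argument at all. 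Alternatively, $m_N$ is a measurable function of $\bm\mu$, hence ${\cal T}_*$-measurable. So \eqref{6.10} with correctors $m_N$ is condition (i) of Theorem \ref{Thm6.1}, whose conclusion is precisely the WLLN with $\Delta_N$.
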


\subsection{Proof of Proposition  \ref{Prop3.1}(i)}
\label{5.1}

To  alleviate notation somewhat, let us   write $\, f_1, f_2, \cdots\,$ for the sequence $\, f_{k_1}, f_{k_2}, \cdots\,$. We shall argue by contradiction: If     \eqref{6.10} fails,       there exist       $\, \alpha \in (0,1)\,$ and a  sequence      $\,N_1, N_2, \cdots$ in $\N$  with 
\begin{equation} 
\label{B3a}
\P \,\bigg( \bigg| \sum_{n=1}^{N_j} g_n - N_j \cdot D_{N_j} \bigg| > \alpha \, N_j \bigg) \, \ge \, \alpha\,,  ~~~~ \forall ~~ j \in \N\,.
\end{equation}
 Passing inductively to a (relabelled) subsequence of this $\,N_1, N_2, \cdots$ we may suppose that, for each $\ell \in \N\,$ and already defined $\,N_1, N_2, \cdots, N_\ell \, ,$ the next term $\,N_{\ell +1}\,$ is chosen big enough, so that, with $\,M_\ell \,:=\, N_1+\cdots + N_\ell\,, $  
$$
\P \,\Big( \,\big| F   \big| > \frac{\alpha}{\,4\,} \,   N_{ \ell+1}  \Big) \, < \,   \frac{\alpha}{\,4\,}
$$ 
holds for any   sum $\,F$    of at most $\, M_\ell\,-\,$many terms   from the sequences $\,f_{ 1}, f_{ 2}, \cdots\,$ or $\,g_1, g_2, \cdots\,.$  

Next, we choose a sequence $\,K_1, K_2, \cdots\,$ in $\N\,$ which  converges  to infinity sufficiently fast, and  is   such that for every $\,j \in \N\,$ we have 
$\,
K_j + N_j \,<\, K_{j+1}\,    
\,$
as well as 
\begin{equation} 
\label{B4a}
{\bm \varrho}_j \, \Big( \text{distribution} \big( f_{K_j +1}  , \cdots , f_{K_j+N_j }\big), \, \text{distribution} \big( g_1, \cdots, g_{N_j} \big) \Big) \, < \, \varepsilon_j  \,.
\end{equation}
Here    ${\bm \varrho}_j \, $ denotes  the \textsc{Prokhorov} distance (\cite{Bill},\,\,p.\,72) between probability measures on the \textsc{Borel} sets of $\,\R^{N_j}$, and   $\, \varepsilon_j \in (0,1)\,$ is small enough  so   we have also
\begin{equation} 
\label{B4b}
\P \bigg ( \, \bigg| \sum_{n=1}^{N_j} f_{K_j +n} - \sum_{n=1}^{N_j} g_{  n} \bigg| > \frac{\alpha}{\,4\,} \bigg)\,<\, \frac{\alpha}{\,4\,}\,, \qquad \forall ~~ j \in \N\,.
\end{equation}

\smallskip
We denote now by $\,f_{k_1}, f_{k_2}, \cdots\,$ the subsequence of $\, f_1, f_2, \cdots\,$ obtained by gluing together   consecutive blocks
   $\,   {\cal I}_j \,:=\, \big\{ K_j + 1, \cdots, K_{j}+ N_j   \big\} \,, ~ j \in \N\, 
 $ of indices and, for each $L \in N\,,$ estimate 
$$
\P \,\bigg( \,\bigg| \sum_{n =1}^{M_{L+1}} 
f_{\,k_n} - M_{L+1} \cdot D_{M_{L+1}} \bigg| > \frac{\alpha}{\,4\,} \, M_{L+1}  \bigg)  \ge \,\P \,\bigg( \,\bigg| \sum_{n =M_L +1}^{M_{L+1}} 
f_{\,k_n} - M_{L+1} \cdot D_{M_{L+1}} \bigg| > \frac{2\,\alpha}{\,4\,} \, M_{L+1}  \bigg) - \frac{\alpha}{\,4\,}
$$
$$
\ge \,\P \,\bigg( \,\bigg| \sum_{n =M_L +1}^{M_{L+1}} 
g_{ n} - M_{L+1} \cdot D_{M_{L+1}} \bigg| > \frac{3\,\alpha}{\,4\,} \, M_{L+1}  \bigg) - \frac{2\,\alpha}{\,4\,}\,
 $$
 $$
~~\ge \,\P \,\bigg( \,\bigg| \sum_{n  =1}^{M_{L+1}} 
g_{ n} - M_{L+1} \cdot D_{M_{L+1}} \bigg| >  \alpha  \, M_{L+1}  \bigg) - \frac{3\,\alpha}{\,4\,}\, \ge \, \frac{\alpha}{\,4\,}\,;
$$
here, the second inequality holds in light of  \eqref{A7},\,\eqref{B4a}--\eqref{B4b}, and the last in light of \eqref{B3a}. But  the resulting subsequence of $\,f_{k_1}, f_{k_2}, \cdots\,$   fails then to satisfy \eqref{D2a}, contrary to the premise  of Proposition   \ref{Prop3.1}(i) and leading to the desired contradiction. 

 The Weak Law of Large Numbers \eqref{6.10} is thus established.  \qed

\subsection{Proof of Proposition  \ref{Prop3.1}(ii)}
\label{5.2}

We argue again by contradiction,     assuming  that the approximating exchangeable functions $g_1, g_2, \cdots$ of \eqref{A7} fail to  converge in \textsc{Ces\`aro} mean  to $f_* \equiv 0\,,$  $\,\P-$a.e.;    there exists then $\, \alpha \in (0,1)\,$ with 
\begin{equation} 
\label{B9}
\P \big(B\big) \ge \alpha\,, \quad \text{for} ~\quad B\, := \bigcap_{M \in \N} ~ \bigcup_{N \ge M , \,N \in \N} B_N^{(g)}\,, ~~~~ B_N^{(g)} := \bigg\{ \bigg| \sum_{n=1}^{N } g_n \bigg| > \alpha \, N  \bigg\}\,.
\end{equation}
We proceed   as in   \S\,\ref{5.1}: consider   sequences of  natural numbers $\, N_1, N_2,    \cdots $ and  $\, K_1, K_2,   \cdots $ with  $\,K_j + N_j \,<\, K_{j+1}\,,   \,$ so that the terms over the disjoint blocks of indices $\,   {\cal I}_j \,:=\, \big\{ K_j + 1, \cdots, K_{j}+ N_j   \big\} \,, ~ j \in \N\, 
 $   are ``glued together" in a (relabelled) subsequence $\,f_{k_1}, f_{k_2}, \cdots\,$. 
 
 \smallskip
 There is now the following additional feature: we choose also for each $j \in \N$ a (large) number $\,C_j \ll N_j\,$ and  ensure that, in each inductive step $\, L \mapsto L+1\,,$ the following holds for these choices: Suppose $K_L, C_L, N_L\,$ have been chosen, and let   $\,
M_L \,:=\, N_1+\cdots + N_L\,;
 $
choose $C_{L+1}$  first, so that 
 $$
 \P \bigg ( \, \bigg| \sum_{n=1}^{M_L} f_{ k_n}  \bigg| > \frac{\alpha}{\,4\,} \, C_{L+1}\bigg)\,<\, \frac{\alpha}{\,4\,}\,,\qquad  \P \bigg ( \, \bigg| \sum_{n=1}^{M_L} g_{ n}  \bigg| > \frac{\alpha}{\,4\,} \, C_{L+1}\bigg)\,<\, \frac{\alpha}{\,4\,}
 $$
hold, then choose $N_{L+1} > C_{L+1}$ so that, in the notation of 
 \eqref{B9} for the sets defined there, we have 
 $$
\P \,\bigg( \, \bigcup_{N = M_L + C_{L+1}}^{N_{L+1}} B_N^{(g)}  \bigg) \,>\, \alpha\,.
 $$
  Finally, we choose $K_L$ large enough, so that the distributions of $\, \big( f_{K_j +1}, \cdots, f_{K_j + N_j} \big) \,$  and $\, \big( g_{ 1}, \cdots, g_{  N_j} \big) \,$ are close enough, in such a manner that the probability of the set 
 $$
 \bigcup_{N = M_L + C_{L+1}}^{N_{L+1}} B_N^{(g)}\quad \text{is close to that of} \quad \bigcup_{N = M_L + C_{L+1}}^{N_{L+1}} B_N^{(f)}\,,\qquad \text{where} \quad B_N^{(f)} := \bigg\{ \bigg| \sum_{n=1}^{N } f_{K_j +n} \bigg| > \alpha \, N  \bigg\}\,.
 $$
 We   follow then the chain of inequalities at the end of  \S\,\ref{5.1}, and obtain the desired contradiction. 
 
 The Strong Law of Large Numbers \eqref{D2bb} is thus established.  \qed

    \begin{remark} 
{\rm
In the context of Proposition   \ref{Prop3.1}(ii), the exchangeable functions $\,g_1, g_2, \cdots$ of \eqref{A7},    conditionally independent and with  common 
    distribution $\,{\bm \mu}_{\,\omega} \equiv {\bm \mu} (\omega) $  given ${\bm \sigma} ({\bm \mu})$,    converge     in \textsc{Ces\`aro} mean  to zero, \,{\it for $\,\P-$a.e.\,$\,\omega \in \Omega$} (i.e.,  \eqref{D2bb} holds).  From  the  converse  to the  Strong Law of Large Numbers (\textsc{Lo\`eve}\,\cite{Loe}, p.\,251), this common conditional   distribution    satisfies  then  
     \begin{equation} 
\label{B2}
\E \big[ \,\big| g_1 \big| \, \big| \, {\cal T}_* \, \big] (\omega) = 
 \int_\R \big| x \big| \,\, {\bm \mu}_\omega \big( \ud x  \big)
  < \infty\,\qquad \text{for}~~ \P-\text{a.e.}~  \omega \in \Omega\,.
\end{equation}
 \noindent
(The corresponding statement for the Weak Law of Large Numbers, i.e., in the context of Proposition   \ref{Prop3.1}(i), is developed in subsection \ref{sec10a}.)

 Conversely,  \eqref{B2}   implies the \textsc{Ces\`aro}-convergence  \eqref{D2bb} of the   $g_1, g_2, \cdots$   to zero, $\P-$a.e.; as well as this same convergence    for the {\rm ``determining"}  subsequence  $f_{k_1}, f_{k_2}, \cdots\,$    and for all its subsequences. Compare with  the  general formulation  of the  hereditary  SLLN in Theorem 3 in  \textsc{Aldous}\,\cite{A}.  }
      \end{remark}

\section{The Proof  of Theorem  \ref{Thm6.2}  }
\label{sec6.1}
 
 We start with a couple of preliminary results pertaining to the setting of Theorem  \ref{Thm6.2}.
 
  \begin{lemma} 
  \label{Lem6.4}
For any given $\eta >0$ there exists an integer $N_\eta \in \N$ such that, in the notation of \eqref{6.6}, \eqref{6.8}, we have
\begin{equation} 
\label{6.19}
\delta_N^2\, \le \, \eta \, N \, \big( 1 + \sigma_N \big)\,, \qquad ~~\forall~~ N \ge N_\eta\,.
\end{equation}
   \end{lemma}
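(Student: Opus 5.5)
We need $\delta_N^2 \le \eta N(1+\sigma_N)$ for all large $N$, where $\delta_N = \E(h_1 \mathbf{1}_{\{|h_1|\le N\}})$ and $N\sigma_N = \E(h_1^2\mathbf{1}_{\{|h_1|\le N\}})$. The plan is to split the truncated mean at an intermediate level $\lambda$ and estimate the two pieces separately. The upper piece is handled by Cauchy--Schwarz with a vanishing probability factor; the lower piece is a fixed constant that is negligible against $\eta N$.

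Fix $\eta>0$ and choose $\lambda = \lambda(\eta) \in (0,\infty)$ so large that $\P(|h_1|>\lambda) \le \eta/4$. This is possible because $h_1$ is real-valued. For $N \ge \lambda$ we write
\[
\delta_N = \E\big(h_1 \mathbf{1}_{\{|h_1|\le \lambda\}}\big) + \E\big(h_1 \mathbf{1}_{\{\lambda<|h_1|\le N\}}\big) =: a_\lambda + b_N .
\]
The first term satisfies $|a_\lambda| \le \lambda$ and does not depend on $N$. For the second, Cauchy--Schwarz gives
\[
b_N^2 \le \E\big(h_1^2 \mathbf{1}_{\{|h_1|\le N\}}\big)\cdot \P(|h_1|>\lambda) \le \frac{\eta}{4}\, N\sigma_N .
\]

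Then $(a+b)^2 \le 2a^2+2b^2$ yields
\[
\delta_N^2 \le 2\lambda^2 + \frac{\eta}{2}\, N \sigma_N .
\]
Set $N_\eta := \max\big(\lceil \lambda\rceil, \lceil 4\lambda^2/\eta\rceil\big)$. For $N \ge N_\eta$ we have $2\lambda^2 \le \eta N/2$, and therefore $\delta_N^2 \le \eta N/2 + (\eta/2) N\sigma_N \le \eta N(1+\sigma_N)$, which is \eqref{6.19}.

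There is no real obstacle here. The only point requiring care is to choose $\lambda$ depending on $\eta$ alone, before $N$. This ensures that the bounded part $a_\lambda^2$ is absorbed by the linear term $\eta N$, while the tail part is controlled through $\sigma_N$ with the small factor $\P(|h_1|>\lambda)$. No integrability of $h_1$ is needed, since all truncated moments are finite.
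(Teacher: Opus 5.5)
Your proof is correct and follows the paper's argument. Like the paper, you split the truncated mean at a fixed level ($\lambda$ here, $M$ there) chosen so that $\P(|h_1|>\lambda)$ is small, absorb the bounded lower piece into $\eta N$, and control the upper piece by Cauchy--Schwarz against $N\sigma_N$. You square the \emph{mean} $b_N$ of the upper piece rather than its second moment, and this is in fact the correct form of the paper's final display, where $2\,\E[(h^{(N)}-h^{(M)})^2]$ should read $2\,(\E[h^{(N)}-h^{(M)}])^2$.
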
 
   
   \noindent
   {\it Proof:} We select $p < \eta / 2\,,$ and $M>0$ big enough so that $\P \big( |h_1| > M \big) < p\,.$ Then we have 
$$
\E \big( h_1^2 \cdot \mathbf{1}_{\{ |h_1| \le M \}} \big) \le M^2 \le \big( \eta / 2 \big) \,N\qquad \text{as long as} ~~ N>M\,,~ \eta\, N > 2\, M^2\,.
$$
In particular, with $\, h^{(M)}:= h_1\cdot  \mathbf{1}_{\{ |h_1| \le M \}}\,$, we have $\, \E \big[\, \big( h^{(M)} \big)^2\, \big] \le \big( \eta / 2 \big) \,N\,$.

\smallskip
We note now that the difference  $\, h^{(N)} - h^{(M)} = h_1\cdot  \mathbf{1}_{\{ M < |h_1| \le N \}}\,$  is supported on a subset of $\, \{ \,|h_1| > M \, \}\,,$ whose $\P-$measure does not exceed $p$. Therefore, by \textsc{Cauchy-Schwarz} we have 
$$
\big| \,\E \big( h^{(N)} - h^{(M)} \big) \, \big|^2 \,\le \, 
\Big( \,\E \big( \, \big| h^{(N)} - h^{(M)}  \big|\,\big) \, \Big)^2 \,\le \,  \E \bigg( \Big( \big| h_1 \big|\cdot  \mathbf{1}_{\{ M < |h_1| \le N \} } \Big) \bigg)^2
$$
$$
~~~~~~~~~~~~~~~\le \, \E \Big( h_1^2 \cdot \mathbf{1}_{\{  |h_1| \le N \} } \Big) \cdot \P \big( |h_1| > M \big) \,\le\, N \, \sigma_N \cdot p\,.
$$
Therefore, 
$ 
\,\delta_N = \E \big[ \, h^{(N)} \,\big] = \E \big[ \, \big( h^{(N)}- h^{(M)} \big)+ h^{(M)} \,\big] 
\,$
satisfies 
$$
\delta_N^2 \, \le \, 2 \cdot \E \big[ \, \big( h^{(N)}- h^{(M)} \big)^2  \,\big] + 2 \cdot \E \big[ \, \big(  h^{(M)} \big)^2  \,\big]\,\le\, 2\,p\,N\, \sigma_N + \eta \, N \, < \, \eta \, N \, \big( 1 + \sigma_N \big)\,.~~~ \qed
$$
 
      \smallskip
    \begin{lemma} 
  \label{Lem6.5}
In the notation of \eqref{6.7}, \eqref{6.8}, we have 
\begin{equation} 
\label{6.20}
\lim_{M \to \infty} \tau_M = 0 ~ \Longleftrightarrow ~ \lim_{M \to \infty} \sigma_M = 0 \,.
\end{equation}
   \end{lemma}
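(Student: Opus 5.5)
The plan is to express both $\tau_M$ and $\sigma_M$ through the tail function $\tau_t$, $t>0$, and compare them with elementary integral identities. The starting point is the layer-cake formula
\[
\E\big(h_1^2\,\mathbf{1}_{\{|h_1|\le M\}}\big) \,=\, \int_0^M 2t\,\P\big(t<|h_1|\le M\big)\,\ud t \,\le\, 2\int_0^M \tau_t\,\ud t\,,
\]
which gives the bound
\[
\sigma_M \,\le\, \frac{2}{M}\int_0^M \tau_t\,\ud t\,.
\]
In the converse direction the identity is exact:
\[
\sigma_M \,=\, \frac{2}{M}\int_0^M \tau_t\,\ud t \,-\, M\,\P\big(|h_1|>M\big) \,=\, \frac{2}{M}\int_0^M \tau_t\,\ud t \,-\, \tau_M\,.
\]

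\textbf{Direction ($\Rightarrow$).} If $\tau_t\to 0$, then $\tau_t$ is bounded on $[1,\infty)$, and it is trivially bounded by $t\le 1$ on $[0,1]$. Hence the Cesàro average $\frac1M\int_0^M\tau_t\,\ud t$ tends to $0$. The first bound then gives $\sigma_M\to 0$.

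\textbf{Direction ($\Leftarrow$).} This is the step needing care, because $\sigma_M$ only controls the mass in $[-M,M]$, whereas $\tau_M$ concerns mass beyond $M$. The plan is to compare $\sigma$ at scale $2^kM$ with $\tau$ at scale $M$. For every $s\ge M$,
\[
\E\big(h_1^2\,\mathbf{1}_{\{M<|h_1|\le s\}}\big) \,\ge\, M^2\,\P\big(M<|h_1|\le s\big)\,,
\]
so $s\,\sigma_s \ge M^2\,\P(M<|h_1|\le s)$. Letting $s\to\infty$ along $s=2^k M$ only yields $\P(|h_1|>M)\le 2^kM\,\sigma_{2^kM}/M^2$, which is too weak. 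So I would use dyadic blocks instead. For $k\ge 0$,
\[
2^{k+1}M\,\sigma_{2^{k+1}M} \,\ge\, (2^kM)^2\,\P\big(2^kM<|h_1|\le 2^{k+1}M\big)\,,
\]
which gives
\[
\P\big(2^kM<|h_1|\le 2^{k+1}M\big) \,\le\, \frac{2\,\sigma_{2^{k+1}M}}{2^kM}\,.
\]
Summing over $k\ge0$ and multiplying by $M$,
\[
\tau_M \,=\, M\,\P(|h_1|>M) \,\le\, \sum_{k\ge0} 2^{1-k}\,\sigma_{2^{k+1}M} \,\le\, 4\,\sup_{s\ge 2M}\sigma_s\,.
\]
The right-hand side tends to $0$ as $M\to\infty$. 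The exchange of summation and $\P(|h_1|=\infty)=0$ cause no problem because $h_1$ is real-valued.

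I expect the only real obstacle to be this reverse direction, where the dyadic decomposition is what makes the argument work. The forward direction is just the layer-cake bound followed by a Cesàro averaging. Lemma~\ref{Lem6.4} is not needed here.
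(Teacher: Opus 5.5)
Your proof is correct. The forward half is the paper's argument, and the reverse half takes a genuinely different route.

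Your exact identity $\sigma_M = \frac{2}{M}\int_0^M \tau_t\,\ud t - \tau_M$ is Feller's relation \eqref{6.22}, and the paper reads off the forward implication from it just as you do.

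For the reverse implication, the paper treats \eqref{6.22} as an integral equation for $\tau$. It solves it formally by differentiation, then justifies the result via Tonelli, obtaining the exact representation \eqref{6.23}, $\tau_M = 2M\int_M^\infty \sigma_t\, t^{-2}\,\ud t - \sigma_M$. This gives $\tau_M \le 2\sup_{t\ge M}\sigma_t$.

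You instead split $(M,\infty)$ into dyadic blocks and apply a Chebyshev-type bound on each, obtaining $\tau_M \le \sum_{k\ge 0} 2^{1-k}\sigma_{2^{k+1}M} \le 4\sup_{s\ge 2M}\sigma_s$. In effect this is a right-endpoint discretization of the integral in \eqref{6.23} over the blocks $[2^kM, 2^{k+1}M]$. Your route is shorter and fully elementary, with no heuristic differentiation to justify afterwards. It loses only a constant, which does not matter for the lemma.

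What the exact identity buys the paper is reuse. In the proof of Theorem \ref{Thm6.2} it yields $\rho_M = \tau_M + \sigma_M = 2M\int_M^\infty \sigma_t\,t^{-2}\,\ud t$, which links the integral condition (iii)$^{\prime}$ to (ii)$^{\prime\prime\prime}$. With only your one-sided bound, that step would need a separate, though easy, argument.

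Incidentally, \eqref{6.23} can be checked in one line by Tonelli: $2M\int_M^\infty t^{-3}\,\E\big(h_1^2\,\mathbf{1}_{\{|h_1|\le t\}}\big)\,\ud t = M\,\E\big(h_1^2/\max(M,|h_1|)^2\big) = \sigma_M + \tau_M$.
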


\noindent
{\it Proof:} We recall  the quantities of \eqref{6.7},\,\eqref{6.8} as 
$ 
\tau (t)   = t \cdot \P \big( |h_1| > t \big)\,, ~ \sigma (t)  = \frac{1}{\,t\,} \cdot \E \big( h_1^2 \cdot \mathbf{1}_{ \{ |h_1| \le t \} } \big)
 $
for $ t \in (0, \infty)\,;$   setting $\tau (0) := \tau (0+) =0\,,$ $\sigma (0) := \sigma (0+) =0\,,$ we obtain   functions defined and right-continuous on $[0,\infty)\,,$ as well as bounded on compact intervals.  Then integration by parts gives   
\begin{equation} 
\label{6.22}
\tau (M) = \frac{2}{\,M\,} \int_0^M \tau (t) \, \ud t - \sigma(M)\,, \qquad M > 0
\end{equation}
as in (7.7) of p.\,235 in \textsc{Feller},\,Vol.\,II \cite{F} (after correcting for a typo there), which leads to the implication $\,\lim_{M \to \infty} \tau (M)  = 0 ~ \,\Longrightarrow ~ \lim_{M \to \infty} \sigma (M)  = 0\,.$

 \smallskip

Now let us look at \eqref{6.22} as an integral equation for the function $\tau (\cdot)$, in terms of some ``given" function $\sigma (\cdot)$. We assume for a moment that both $\tau (\cdot)\,$ and $\sigma (\cdot)\,$ are continuous and continuously differentiable, and obtain from \eqref{6.22} by differentiation $\, \tau (t) - t  \,\tau^\prime (t)= \sigma (t) + t  \,\sigma^\prime (t)\,,$ thus also 
$$
\left( \frac{\,\tau (t)\,}{t} \right)' = \,-\frac{1}{\,t^2\,} \,\big( t \, \sigma (t) \big)'. \qquad \text{Noting} \quad \lim_{t \downarrow 0} \left( \frac{\,\tau (t)\,}{t} \right)=1\,,~~~  \lim_{t \uparrow \infty} \left( \frac{\,\tau (t)\,}{t} \right)=0\,,\quad \lim_{t \downarrow 0} \left( \frac{\,\sigma (t)\,}{t} \right)=0
$$
from  \eqref{6.22},  and    integrating by parts, we obtain  the ``solution" of the above ``integral equation" as 
\begin{equation} 
\label{6.23}
\tau(M) \,= \, 2 \,M  \,\int^{\infty}_M \frac{\sigma (t)}{t^2} \,\ud t - \sigma (M)\,, \qquad M > 0 \,.
\end{equation}
Having thus arrived at the expression \eqref{6.23},  it is easy to argue  that it follows  from \eqref{6.22} in general  (i.e., without assuming continuous differentiability)  via repeated application of \textsc{Tonelli}'s theorem. The implication  $\,\lim_{M \to \infty} \sigma (M)  = 0 ~ \Longrightarrow ~ \lim_{M \to \infty} \tau (M)  = 0\,$ is now clear from \eqref{6.23}.   \qed

\subsection{Proof of Theorem  \ref{Thm6.2}}
\label{sec6.2}

   With these two results in place, and noting that it suffices to deal with $\eps =1$ in    Theorem  \ref{Thm6.2}, we start by observing  that the equivalence \,{\bf (ii)}\,$\Leftrightarrow$\,{\bf (ii)$^\prime$}\, is the subject of Lemma   \ref{Lem6.5}.  
      
   The implication \,{\bf (i)}\,$\Rightarrow$\,{\bf (ii)}\, is from  \textsc{Feller},\,Vol.\,II \cite{F}, p.\,236; and   \,{\bf (ii)}\,$\Rightarrow$\,{\bf (i)}\, is proved in \textsc{Feller}, Vol.\,II (\cite{F}, p.\,235) where it is   pointed out that, with $\, d_N   \equiv  \delta_N = \E   \big( h_1 \cdot  \mathbf{1}_{ \{ |h_1| \le   N \} }   \big)\,,$   the bound
\begin{equation} 
\label{6.21}
 \pi_N (\eta) \,= \,\P \bigg( \, \bigg| \frac{1}{\,N\,} \sum_{n=1}^N h_n - \delta_N \bigg| > \eta  \bigg)\,\le \, \tau_N + \frac{1}{\,\eta^2\,} \, \sigma_N \, \longrightarrow\,0\,, \qquad \text{as}~~ N \to \infty\,,
\end{equation}
holds for   $\eta > 0\,.$ We note  here     $\,\lim_{N \to \infty} \tau_N = 0 ~ \Rightarrow ~ \lim_{N \to \infty} \sigma_N = 0\,$ from Lemma   \ref{Lem6.5}.

We turn   to the equivalence \,{\bf (ii)$^\prime$}\,$\Leftrightarrow$\,{\bf (ii)$^{\prime \prime}$}. From Lemma   \ref{Lem6.4}, for any given $\eta \in (0,1)$ we have 
  \begin{equation} 
\label{6.25}
   \sigma_M = v_M + \frac{1}{M} \,\delta_M^2 \ge \, v_M=    \sigma_M  -  \frac{1}{M} \,\delta_M^2 \ge \sigma_M  -  \eta \big( 1 + \sigma_M \big)= \big( 1 - \eta \big) \sigma_M - \eta
\end{equation}
   for every $M \ge N_\eta$. The equivalence $\, \lim_{M \to \infty} v_M = 0 \, \Longleftrightarrow \, \lim_{M \to \infty} \sigma_M = 0\,$ follows from \eqref{6.25}, as does the equivalence \,{\bf (iii)$^\prime$}\,$\Leftrightarrow$\,{\bf (iii)$^{\prime \prime}$}.    Now,  we have shown that the  conditions  {\bf (ii)},\,{\bf (ii)$^\prime$} are equivalent, so each of them implies  also 
$\,
\lim_{M \to \infty}   \rho_M = \lim_{M \to \infty} \big( \tau_M + \sigma_M \big)= 0 \,,$ 
 namely,   {\bf (ii)$^{\prime \prime \prime}$};   and is in turn implied by it,  because all quantities in question are positive.  

 But  {\bf (ii)$^{\prime \prime \prime}$}    is   also equivalent to   
{\bf (iii)}  on account of the identity $\rho (M)   = (2/M) \int_0^M \tau (t) \, \ud t\,$ from \eqref{6.22}; and to the condition {\bf (iii)$^{\prime}$}  on account of   $\,\rho (M)   = 2 M \int_0^M (\sigma  (t) / t^2 )\, \ud t\,$ from \eqref{6.23}.
   \qed

\section{The Proof  of Theorem \ref{Thm6.1}   }   
\label{sec9}

   {\it The conditions\,\,{\bf (i)},\,{\bf  (i)$^\prime$}
are equivalent.}\,\,Indeed, for all $\eps >0\,$, $N \in \N\,,$ we have $ \,\pi_N^*( \eps) = \E \big[ \,
 \mathbold{\Pi}_N^* (\eps) \, \big]\,$ and $\, 0 \le  \mathbold{\Pi}_N^* (\eps) \le 1\,,$ $\P-$a.e. Now, convergence in probability implies $\Ll^1-$convergence under uniform boundedness (e.g., Theorem 4.5.4 in \textsc{Chung} \cite{Ch}), so   {\bf (i)} $ \Rightarrow$  {\bf  (i)$^\prime$} follows. 
 On the other hand, the reverse  implication   {\bf  (i)$^\prime$}  $ \Rightarrow$  {\bf (i)} is   
 straightforward, since $\Ll^1-\,$convergence implies convergence in probability.

For the remaining   equivalences,   
we invoke  Proposition \ref{Prop6.6} and  Corollary \ref{Cor6.7} right below.       

\begin{proposition} 
  \label{Prop6.6} 
In the context of Theorem \ref{Thm6.2} and with the notation of \eqref{6.9'}--\eqref{6.9}, there is a universal constant $\,C \in (0, \infty)$ such that, for all $\,N \in \N$ sufficiently large, we have 
 \begin{equation} 
\label{6.25a}
\pi_N \, \le \, C \cdot \big[ \, \big( \tau_N \wedge 1\big) + \big( \sigma_N \wedge 1\big)\, \big]
\end{equation}
 \begin{equation} 
\label{6.26}
  \big( \tau_N (4) \wedge 1\big) + \big( \sigma_N \wedge 1\big)\,   \le \, C \cdot \,\pi_N \,, \qquad   \big( \tau_N (4) \wedge 1\big) + \big( \,v_N \wedge 1\big)\,   \le \, C \cdot \,\pi_N \,.
\end{equation}
 \end{proposition}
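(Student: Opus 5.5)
The plan is to prove the upper bound \eqref{6.25a} by truncation at level $N$ with Chebyshev's inequality. The lower bounds \eqref{6.26} I would attack by symmetrization, using \textsc{L\'evy}'s maximal inequality for the $\tau$-part and a \textsc{Paley--Zygmund} anti-concentration argument for the $\sigma$- and $v$-parts. The last of these is where I expect trouble; see the end.

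\emph{Upper bound.} Write $h_n^{(N)}:=h_n\mathbf{1}_{\{|h_n|\le N\}}$. On the event $\{\max_{n\le N}|h_n|\le N\}$ the sum $\sum_n h_n$ coincides with $\sum_n h_n^{(N)}$, and the latter has mean $N\delta_N$. Hence
$$\pi_N\le N\,\P(|h_1|>N)+\P\Big(\Big|\sum_{n=1}^N h_n^{(N)}-N\delta_N\Big|>N\Big)\le \tau_N+\frac{N\,\mathrm{Var}(h_1^{(N)})}{N^2}\le \tau_N+\sigma_N ,$$
which is the estimate \eqref{6.21}. Since also $\pi_N\le1$, we get $\pi_N\le\min(1,\tau_N+\sigma_N)\le(\tau_N\wedge1)+(\sigma_N\wedge1)$, so \eqref{6.25a} holds with $C=1$.

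\emph{Lower bound for $\tau_N(4)$.} Let $h_n'$ be an independent copy of $h_n$, and put $h_n^s:=h_n-h_n'$ and $S_N^s:=\sum_{n\le N}h^s_n$. The centering $N\delta_N$ cancels in $S_N^s$, so $\P(|S^s_N|>2N)\le 2\pi_N$. \textsc{L\'evy}'s inequality for symmetric summands gives
$$\P\big(\max_{n\le N}|h^s_n|>2N\big)\le 2\,\P(|S^s_N|>2N)\le 4\pi_N .$$
For $N$ so large that $\P(|h_1|>2N)\le1/2$, we have $q:=\P(|h^s_1|>2N)\ge \P(|h_1|>4N)\,\P(|h_1'|\le 2N)\ge\frac12\,\P(|h_1|>4N)$. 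Then $\P(\max_{n\le N}|h_n^s|>2N)=1-(1-q)^N\ge(1-e^{-1})\min(1,Nq)$, and so $\tau_N(4)\wedge1\le C\pi_N$. This is where the constant $4$ in \eqref{6.26} comes from.

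\emph{Lower bound for $\sigma_N$, $v_N$.} I would work with the symmetrized truncated sum $S^{s,(N)}_N$, whose summands are bounded by $2N$ and whose variance is $V=2N^2v_N$. By the previous step, replacing $h_n$ by $h_n^{(N)}$ costs at most $C\pi_N$. The fourth-moment bound $\E(S^{s,(N)}_N)^4\le 3V^2+4N^2V$ combined with \textsc{Paley--Zygmund} gives
$$\P\big(|S^{s,(N)}_N|^2>V/2\big)\ge c\,\min\big(1,\,V/N^2\big)=c\,\min(1,2v_N).$$
When $v_N$ exceeds a large absolute constant, the threshold $\sqrt{V/2}=N\sqrt{v_N}$ exceeds $2N$, and therefore $\pi_N\ge c$, which handles $v_N\wedge1$ in that regime. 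To pass from $v_N$ to $\sigma_N$ I would use \eqref{6.25} from Lemma~\ref{Lem6.4}.

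\emph{Main obstacle.} The obstacle is the regime where $v_N$ (equivalently $\sigma_N$) is small. There the \textsc{Paley--Zygmund} threshold lies below $2N$, and a linear bound $v_N\le C\pi_N$ is not obtainable in this way. In fact the linear bound seems doubtful: for Gaussian $h_1$ one has $\sigma_N\asymp1/N$, while $\pi_N$ decays exponentially in $N$. So in this step I would first check whether the intended statement is only the qualitative equivalence $\pi_N\to0\iff\tau_N(4)+\sigma_N\to0$. That equivalence follows from the large-$v_N$ regime above together with Lemma~\ref{Lem6.5}, and it is all that is needed for Theorem~\ref{Thm6.1}.
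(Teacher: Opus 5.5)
Your upper bound (with $C=1$, and even with $v_N$ in place of $\sigma_N$) is correct. So is your bound $\tau_N(4)\wedge1\le C\pi_N$. Both follow essentially the paper's route: Feller's truncation/Chebyshev estimate \eqref{6.21}, and symmetrization combined with \textsc{L\'evy}'s inequality (Feller's Lemma~2, p.\,149), which is how the paper handles the $\tau$-part in Lemmata~\ref{Lem6.8}, \ref{Lem6.9}.

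Your doubt about the $\sigma_N$- and $v_N$-parts of \eqref{6.26} is well founded. They are false, so no argument can close that step.

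\emph{Counterexample.} Take $h_1$ Rademacher. Then $\delta_N=0$ and $|\sum_{n\le N}h_n|\le N$, so $\pi_N=0$ for every $N$, while $\tau_N(4)=0$ and $\sigma_N=v_N=1/N>0$. Already $h_1\equiv1$ defeats the $\sigma_N$-version.

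\emph{Where the paper's proof breaks.} The paper obtains these bounds from Lemma~\ref{Lem6.10}, which fails on the same example, since Rademacher variables are symmetric with support in $[-N,N]$. The error is in the last display of its proof. On $\{\tau_1=\infty\}$ one only knows $|S_N|\le N$, so this event contributes up to $1-\beta$ to $N^{-2}\E(S_N^2)=\sigma_N$, not $O(\beta)$. The stopping-time argument therefore gives information only once $\sigma_N$ exceeds an absolute constant, which is exactly your large-$v_N$ regime.

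\emph{Your closing claim is wrong.} You say the qualitative i.i.d.\ equivalence, obtained via Lemma~\ref{Lem6.5}, is all Theorem~\ref{Thm6.1} needs. It is not. In that theorem the bounds are applied $\omega$ by $\omega$ to the random law ${\bm\mu}_\omega$, so they must be uniform over laws. Lemma~\ref{Lem6.5} concerns a single law and does not pass to convergence in probability over a random family of laws. Proposition~\ref{Prop6.3}\,(${\mathfrak C}$) exhibits exactly this: $T_N\to0$ in probability while $\Sigma_N$ does not tend to zero. For (i)$\Rightarrow$(ii) of Theorem~\ref{Thm6.1} you need a law-uniform statement that $\pi$ is non-negligible whenever $v_N$ is not small.

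\emph{The fix uses your own estimate.} Your Paley--Zygmund bound delivers this once the level scales with $v_N$ instead of being fixed at $\eps=1$. Combine three facts:
\begin{itemize}
\item $\P(|S^{s,(N)}_N|>N\sqrt{v_N})\ge 1/(4(3+2/v_N))$;
\item the truncation error is at most $2\tau_N$;
\item $\P(|S^s_N|>2\eps N)\le2\pi_N(\eps)$, valid for any centering, hence for any $\mathcal T_*$-measurable corrector.
\end{itemize}
This gives, for every law, every $N$ and every $\eta>0$,
\[
v_N\ge\eta\quad\Longrightarrow\quad \pi_N\big(\sqrt{\eta}/2\big)\,\ge\,\frac{\eta}{8(3\eta+2)}-\tau_N\,.
\]
The WLLN is assumed for every $\eps>0$, and your $\tau$-bound, used at all scales, gives $T_N(\eps)\to0$ in probability. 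Together these yield $V_N\to0$ in probability, and then $\Sigma_N\to0$ by applying Lemma~\ref{Lem6.4} for each $\omega$. This threshold form is the correct substitute for the linear bounds in \eqref{6.26}.

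\emph{A smaller point.} The previous step does not show that replacing $h_n$ by $h_n^{(N)}$ costs at most $C\pi_N$. Truncation at level $N$ costs $\tau_N$, whereas you controlled $\tau_N(4)$. In the threshold form above, an additive $\tau_N$ is harmless.
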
 

 \begin{corollary} 
  \label{Cor6.7} 
  In the context of Theorem \ref{Thm6.2} and with the notation of \eqref{6.9'}--\eqref{6.9}, the following are equivalent: 
  
  \smallskip
  \noindent
 {\rm (a)} $\,\lim_{M \to \infty} \pi_M (\eps) =0\,,$ ~for all $\eps >0\,,$
 
  \smallskip
  \noindent
 {\rm (b)} $\,\lim_{M \to \infty} \tau_M (\eps) =0\,$ ~{\rm and}~ $\,\lim_{M \to \infty} \sigma_M (\eps) =0\,,$ ~for all $\eps >0\,,$
  
  \smallskip
  \noindent
 {\rm (c)} $\,\lim_{M \to \infty} \tau_M (\eps) =0\,$ ~{\rm and}~ $\,\lim_{M \to \infty} v_M (\eps) =0\,,$ ~for all $\eps >0\,.$
   \end{corollary}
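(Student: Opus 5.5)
The plan is to derive Corollary \ref{Cor6.7} directly from Theorem \ref{Thm6.2}, which is already proved in \S\,\ref{sec6.2}, and to use Proposition \ref{Prop6.6} only as a quantitative check.

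\emph{(a) $\Rightarrow$ (b).} Assume $\lim_{M\to\infty}\pi_M(\eps)=0$ for all $\eps>0$. Then condition {\bf (i)} of Theorem \ref{Thm6.2} holds with correctors $d_N\equiv\delta_N$, since $\pi_N^\star(\eps)=\pi_N(\eps)$ for this choice. By Theorem \ref{Thm6.2}, {\bf (i)} is equivalent to {\bf (ii)} and to {\bf (ii)$^\prime$}. These two say exactly that $\tau_M(\eps)\to0$ and $\sigma_M(\eps)\to0$ for every $\eps>0$, which is (b).

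\emph{(b) $\Rightarrow$ (a).} Conversely, (b) contains {\bf (ii)} of Theorem \ref{Thm6.2}. The closing sentence of that theorem then gives \eqref{6.15} with $d_N\equiv\delta_N$ and $\pi_N^\star(\eps)\equiv\pi_N(\eps)$, that is, (a).

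\emph{(b) $\Leftrightarrow$ (c).} This is the equivalence {\bf (ii)$^\prime$} $\Leftrightarrow$ {\bf (ii)$^{\prime\prime}$} of Theorem \ref{Thm6.2}, established through \eqref{6.25} and Lemma \ref{Lem6.4}, together with the fact that the $\tau$-part of (b) and of (c) is the same condition.

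As an independent, quantitative confirmation I would also run the argument through Proposition \ref{Prop6.6} by rescaling. Put $h_n^\prime:=h_n/\eps$ and write primes for the quantities \eqref{6.6}--\eqref{6.9} computed from $h^\prime$. Direct computation gives
\begin{equation*}
\tau_N^\prime=\tau_N(\eps),\qquad \sigma_N^\prime=\sigma_N(\eps)/\eps^2,\qquad v_N^\prime=v_N(\eps)/\eps^2,\qquad \delta_N^\prime=\delta_N(\eps)/\eps .
\end{equation*}
Hence $\pi_N^\prime=\P\big(|\frac1N\sum_{n=1}^N h_n-\delta_N(\eps)|>\eps\big)$. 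This centres at $\delta_N(\eps)$ rather than at $\delta_N$, and the two centrings differ by at most
\begin{equation*}
\big|\delta_N-\delta_N(\eps)\big|\le \E\big(|h_1|\,\mathbf 1_{\{\eps N<|h_1|\le N\}}\big)\le N\,\P\big(|h_1|>\eps N\big)=\tau_N(\eps).
\end{equation*}
Under (b) this discrepancy vanishes as $N\to\infty$. Inequality \eqref{6.25a} for $h^\prime$ then gives $\pi_N(2\eps)\to0$, and \eqref{6.26} gives the reverse bounds.

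The only delicate point is this bookkeeping of centrings and thresholds under rescaling. The approach via Theorem \ref{Thm6.2} avoids it entirely, so I expect no real obstacle.
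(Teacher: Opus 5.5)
Your argument is correct, but it takes a different route from the paper. The paper reads Corollary \ref{Cor6.7} off the finite-$N$, two-sided inequalities of Proposition \ref{Prop6.6}: $\pi_N\le C[(\tau_N\wedge1)+(\sigma_N\wedge1)]$ and $(\tau_N(4)\wedge1)+(\sigma_N\wedge1)\le C\,\pi_N$, and likewise with $v_N$. Those inequalities rest on symmetrization (Lemma \ref{Lem6.9}) and the stopping-time estimate of Lemma \ref{Lem6.10}.

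You instead note that (a) is literally condition \textbf{(i)} of Theorem \ref{Thm6.2} with $d_N\equiv\delta_N$. Likewise (b) and (c) are the conjunctions of \textbf{(ii)} with \textbf{(ii)$^\prime$}, resp.\ \textbf{(ii)$^{\prime\prime}$}. Since \S\,\ref{sec6.2} proves Theorem \ref{Thm6.2} from Feller's theorem and Lemmas \ref{Lem6.4}, \ref{Lem6.5}, with no appeal to Proposition \ref{Prop6.6}, your argument is non-circular and much shorter. It also makes visible that in the I.I.D.\ setting the conjunctions are redundant: each of (b), (c) is equivalent to $\tau_M(\eps)\to0$ alone.

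What the paper's route buys is what yours cannot deliver. It gives bounds valid for each large $N$ with a universal constant, and these survive conditioning on $\mathcal{T}_*$. Hence they transfer to the ``in $\P$-probability'' conditions of Theorem \ref{Thm6.1}, which is where the Corollary is actually used. A qualitative statement about limits, like Theorem \ref{Thm6.2}, cannot be applied $\omega$ by $\omega$ when the hypotheses hold only in probability. Proposition \ref{Prop6.3}\,(${\mathfrak B}$),\,(${\mathfrak C}$) shows that the single-condition equivalence \textbf{(ii)}$\Leftrightarrow$\textbf{(ii)$^\prime$} genuinely fails to lift, which is why the paper phrases (b), (c) as conjunctions.

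Your optional rescaling check is incomplete in the direction (a)$\Rightarrow$(b). There, (a) controls deviations from $\delta_N$, while your rescaled $\pi_N^\prime$ is centred at $\delta_N(\eps)$. So before invoking \eqref{6.26} for $h^\prime$ you would first need $|\delta_N-\delta_N(\eps)|\to0$, essentially $\tau_N(\eps)\to0$, which is part of what is being proved. The fix is to apply \eqref{6.26} at $\eps=1$ and use the scaling identities $\tau_t(\eps)=\tau_{\eps t}/\eps$ and $\sigma_t(\eps)=\eps\,\sigma_{\eps t}$; this already yields (b) directly. Also, your bound $|\delta_N-\delta_N(\eps)|\le\tau_N(\eps)$ holds only for $\eps\le1$; for $\eps>1$ it becomes $\eps\,\tau_N$. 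Your main argument does not depend on this check, so these points do not affect its correctness.
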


   The Corollary follows directly from    Proposition   \ref{Prop6.6}, and completes the   proof of Theorem  \ref{Thm6.1}.  \qed

\subsection{Proof of Proposition  \ref{Prop6.6}}
\label{sec6.5}

The   inequality \eqref{6.25a}   restates the  bound \eqref{6.21} (cf.\,\cite{F},\,p.\,235); in fact, the (formally stronger)   bound $\, \pi_N \, \le \, C \cdot \big[ \, \big( \tau_N \wedge 1\big) + \big( v_N \wedge 1\big)\, \big]\,$   also holds for $N \in \N\,$ sufficiently large, by the double inequality  \eqref{6.25}. This latter  shows also that the first inequality in \eqref{6.26} follows from the second.  

It remains, therefore, to prove the second inequality in \eqref{6.26}. This, in turn, follows from     Lemmata   \ref{Lem6.8}, \ref{Lem6.9}   below; these   analyze the symmetric case first,  and provide  the desired bound
$$
  \big( \tau_N (4) \wedge 1\big) + \big( \,v_N \wedge 1\big)\,   \le \,4\, \Big[ \, \big( \tau_N^{\,sym} (2) \wedge 1\big) + \big( \,v_N^{\,sym} (2) \wedge 1\big)\,\Big]\,\le\,4 C^{\,sym}\, \pi_N^{\,sym} (2)\,\le\,8 C^{\,sym}\, \pi_N 
$$
  for all $\,N \in \N$ sufficiently large. This settles the general case, and completes the proof.   \qed  

 \begin{lemma} {\bf Bounds for the Symmetric Case:} 
  \label{Lem6.8} 
  Let $\,h_1, h_2, \cdots$ be I.I.D.\,with   symmetric distribution. Recalling \eqref{6.9'}--\eqref{6.9} we have, for all $\,N \in \N\,$ sufficiently large and  some universal constant $\,C^{\,sym}>0\,,$  
   \begin{equation}
  \label{6.28}
  \tau_N \wedge 1 \le 6 \cdot \pi_N\,,  
  \end{equation}
  \begin{equation}
  \label{6.29}
  \big( \tau_N   \wedge 1\big) + \big( \sigma_N \wedge 1\big)\,   \le \, C^{\,sym} \cdot \,\pi_N \,.
\end{equation}
   \end{lemma}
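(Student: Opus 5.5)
For symmetric I.I.D. $h_1,h_2,\cdots$ we have $\delta_N=0$, so $\pi_N=\P\big(|S_N|>N\big)$ with $S_N:=\sum_{n=1}^N h_n$. The plan is to use the two classical tools for symmetric sums, the \textsc{L\'evy} maximal inequality and a truncation/variance lower bound via \textsc{Paley--Zygmund}.

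\textbf{Step 1: the bound \eqref{6.28}.} By the \textsc{L\'evy} inequality for symmetric summands, $\P\big(\max_{n\le N}|h_n|>2N\big)\le 2\,\P(|S_N|>N)=2\pi_N$. Indeed, $\max_n|h_n|\le 2\max_{k\le N}|S_k|$, and the symmetric-sum version of \textsc{L\'evy}'s inequality for the maximal summand applies. Independence gives $\P(\max_{n\le N}|h_n|>2N)=1-(1-q)^N$ with $q=\P(|h_1|>2N)$, and $1-(1-q)^N\ge \tfrac12\,(Nq\wedge 1)$. This yields $\tau_N(2)\wedge1\le 4\pi_N$.

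To reach $\tau_N=N\P(|h_1|>N)$ itself, I will instead use a direct argument for a single large jump. On the event that exactly one $|h_n|>N$, symmetry of that summand makes $|S_N|>N$ with conditional probability at least $1/2$: flip its sign, and at least one of $\pm h_n+\text{rest}$ exceeds $N$ in absolute value. The probability of exactly one exceedance is $Nq(1-q)^{N-1}\ge \tfrac13(Nq\wedge1)$ after handling $Nq\ge1$ by monotonicity, and this gives the constant $6$. I expect this to need some care but to be routine.

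\textbf{Step 2: the bound \eqref{6.29}.} It remains to bound $\sigma_N\wedge1$ by $C\pi_N$ when $\tau_N$ is small. First I will truncate. Let $X_n:=h_n\mathbf 1_{\{|h_n|\le N\}}$, which is symmetric, and $S_N':=\sum_n X_n$. Then $\P(S_N\ne S_N')\le\tau_N$, so $\P(|S_N'|>N)\le\pi_N+\tau_N\le 7\pi_N$ by Step 1. Next, $\E (S_N')^2=N^2\sigma_N$, and by symmetry and independence $\E (S_N')^4=N\E X_1^4+3N(N-1)(\E X_1^2)^2\le N^3\cdot N\sigma_N+3N^4\sigma_N^2$, using $X_1^4\le N^2X_1^2$.

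If $\sigma_N\ge 4$, say, then \textsc{Paley--Zygmund} applied to $(S_N')^2$ at level $\tfrac14\E (S_N')^2\ge N^2$ gives
\[
\P\big(|S_N'|>N\big)\ge \tfrac{9}{16}\,\frac{N^4\sigma_N^2}{N^4\sigma_N+3N^4\sigma_N^2}\ge c>0 .
\]
For small $\sigma_N$ the level $N$ exceeds the standard deviation $N\sqrt{\sigma_N}$, so a lower bound proportional to $\sigma_N$ is needed. This is the main obstacle.

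For that regime I will compare $\pi_N$ at a smaller threshold. I will rescale by applying the argument with threshold $\eps N$ and truncation at level $N$, then transfer back using the condition that $\eps$ enters the definitions only through fixed constants. Concretely, I will show $\P(|S_N'|>N/2)\ge c\,(\sigma_N\wedge1)$ via a \textsc{Kolmogorov}-type converse inequality for bounded symmetric summands. That inequality states that if $|X_n|\le N$ and $\P(\max_k|S'_k|\le a)\ge 1/2$, then $\mathrm{Var}(S_N')\le C(a+N)^2\,\P(\max_k|S'_k|>a)$-type bounds hold. Combined with the \textsc{L\'evy} inequality $\P(\max_k|S'_k|>a)\le2\P(|S'_N|>a)$, this gives $N^2\sigma_N\le C N^2\pi'$, hence $\sigma_N\wedge 1\le C\pi_N$ after absorbing $\tau_N$ via Step 1. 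The threshold mismatch between $a=N$ and the factor $2$ in $\tau_N(2)$, $v_N^{sym}(2)$ used later is consistent with the statement allowing universal constants.
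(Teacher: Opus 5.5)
\paragraph{Step 2 cannot be completed.} Inequality \eqref{6.29} is false when $\sigma_N$ is small, so no argument can close this step. Take $h_n=\pm1$ with probability $\frac12$ each. Then $\delta_N=0$, $\tau_N=0$ and $\sigma_N=1/N$. But $|S_N|\le N$, so $\pi_N=0$ for every $N$. The same happens, less degenerately, for standard Gaussian $h_n$: there $\sigma_N\sim 1/N$ while $\pi_N=\P(|Z|>\sqrt N)\le e^{-N/2}$.

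In your plan the failure sits in the ``Kolmogorov-type converse inequality'' $\mathrm{Var}(S_N')\le C(a+N)^2\,\P(\max_k|S_k'|>a)$. No such inequality holds. In the Rademacher example with $a=N/2$ the left side is $N$, while by \textsc{L\'evy} and \textsc{Hoeffding} the right side is $O(N^2e^{-N/8})$. Your intermediate claim $\P(|S_N'|>N/2)\ge c\,(\sigma_N\wedge1)$ fails for the same reason. The genuine converse \textsc{Kolmogorov} inequality bounds $\P(\max_k|S_k'|\le a)$ by a constant times $(a+N)^2/\mathrm{Var}(S_N')$. It is informative only when $\sigma_N$ is bounded below, which is the regime your \textsc{Paley--Zygmund} step already handles. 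It cannot give a bound linear in a small $\sigma_N$. Nor can a bound at threshold $N/2$ be ``transferred back'' to threshold $N$ with a universal constant: in the example, $\pi_N$ at threshold $N$ is exactly $0$.

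\paragraph{Comparison with the paper, and what survives.} The paper's proof breaks at the same point. It reduces \eqref{6.29}, via $\P^{(N)}$, to Lemma~\ref{Lem6.10}, and the Rademacher example refutes that lemma. Its stopping-time computation overlooks that $S_N^2$ can be nearly $N^2$ on $\{\tau_1=\infty\}$. So it only yields $\sigma_N\le 1+C\beta/(1-\beta)^3$, which bounds $\beta$ away from $0$ only when $\sigma_N>1$.

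Your own \textsc{Paley--Zygmund} computation does prove a correct substitute, with the threshold adapted to $\sigma_N$: $\P\big(|S_N'|>\frac12\sqrt{\sigma_N}\,N\big)\ge 9\sigma_N/(16(1+3\sigma_N))$. Hence $\sigma_N\ge\eta$ implies $\pi_N(\sqrt\eta/2)+\tau_N\ge 9\eta/(16(1+3\eta))$. In the symmetric case this shows qualitatively that $\pi_N(\eps)\to0$ for all $\eps>0$ forces $\tau_N\to0$ and $\sigma_N\to0$. That is the kind of statement Corollary~\ref{Cor6.7} actually uses. No bound of the form \eqref{6.29}, with fixed threshold $\eps=1$ and a universal constant, is available.

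\paragraph{Step 1.} Step 1 is sound and is the paper's route (\textsc{Feller}'s Lemma~2, p.\,149), with one repair. Conditioning on ``exactly one exceedance'' gives only about $\frac12\tau_Ne^{-\tau_N}$, which is useless when $\tau_N$ is large, and there is no monotonicity to fall back on. Instead, flip the \emph{first} summand with $|h_n|>N$; the event that the first exceedance occurs at $n$ is invariant under $h_n\mapsto-h_n$. This gives directly $\pi_N\ge\frac12\,\P(\max_{n\le N}|h_n|>N)\ge\frac12(1-e^{-\tau_N})\ge\frac16(\tau_N\wedge1)$, which is \eqref{6.28}.
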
 
   
   \noindent
   {\it Proof:} With $\,A_N := \bigcup_{n=1}^N \big\{ \big|h_n \big| > N \big\}\,$ we have $\, \P (A_N) = 1 - \big( 1 - (\tau_N / N ) \big)^N \ge 1 - e^{- \tau_N}\,,$ thus also $\, 3\, \P(A_N) \ge   \tau_N \wedge 1\,;$ whereas,  by the symmetry assumption  and   Lemma 2 on p.\,149 of \textsc{Feller} \cite{F}, we obtain    the claim of  \eqref{6.28} as 
   $$
   \pi_N = \P \Big( \,\Big| \sum_{n=1}^N h_n \Big| > N \Big)  \ge \frac{1}{\,2\,} \, \,\P (A_N) \ge \frac{1}{\,6\,} \, \,  \big( \tau_N   \wedge 1\big) \,.
   $$

Next, we need to estimate $\sigma_N$ in terms of $\pi_N\,$; and for this, we may assume $\, \P (A_N) \le 1/2\,,$ as otherwise   \eqref{6.29} holds for the constant $\, C^{\,sym} = 12\,.$ We introduce   the probability measure 
 \begin{equation}
  \label{6.30}
 \P^{(N)} \,:=\, \frac{1}{\,1-\P(A_N)\,} \cdot \P \,\Big|_{\Omega \setminus A_N}\,,
\end{equation}
the normalized restriction of $\,\P$ to   $\,  \Omega \setminus A_N = \bigcap_{n=1}^N \big\{ \big| h_n \big| \le N \big\}\,$; and note that, under this   measure,  the functions   $\,h^{(N)}_n := h_n \cdot \mathbf{1}_{ \,\{ | h_n | \le N \}}\,,~ n=1, \cdots, N\,$ are independent,  with common distribution supported on $\big[-N,N\,\big].$  Then 
\begin{equation}
  \label{6.31}
 \pi^{(N)}_N := \P^{(N)} \Big( \,\Big| \sum_{n=1}^N h_n \Big| > N \Big)  \ge c \cdot \big( \sigma_N \wedge 1\big)
\end{equation}
holds   for some real constant $c>0$, as shown in subsection \ref{sec6.6} below. But from   \eqref{6.31}, \eqref{6.30} we deduce the inequality right below,   which  establishes   \eqref{6.29}   in conjunction with \eqref{6.28}: 
$$
\qquad \qquad 
\qquad \qquad    \pi_N = \P \Big( \,\Big| \sum_{n=1}^N h_n \Big| > N \Big)  \, \ge \, \frac{\, \pi^{(N)}_N \,}{\,2\,} \,  \ge   \,  \frac{\, c \,}{\,2\,} \,\big( \sigma_N   \wedge 1\big) \,.\qquad \qquad  \qquad \qquad  \qquad \qquad \qed
   $$
 
   \smallskip
 
 {\it  We pass now to the general case:} with $\,h_1^\pm, h_2^\pm, \cdots$   independent copies of $\,h_1, h_2, \cdots\,,$ we consider the sequence  of   ``symmetrized" versions $\,h_n^{\,sym} := h_n^{\,+} - h_n^{\,-} \,, ~ n \in \N\,,$ which are I.I.D. with   symmetric distribution. We denote then by $\,\tau^{\,sym}_t (\eps)\,,$ $\,\sigma^{\,sym}_t (\eps)\,,$  $\,\pi^{\,sym}_t (\eps)\,, \cdots $ the     so-symmetrized versions of the quantities in \eqref{6.7}, \eqref{6.8}, \eqref{6.9'}, $\cdots$ pertaining to $\,h_1^{\,sym}$.
   
    \begin{lemma} {\bf Bounds for the General Case:} 
  \label{Lem6.9} 
  Let $\,h_1, h_2, \cdots$ be I.I.D. 
  With   the notation of \eqref{6.9'}--\eqref{6.9} we have, for all $\,N \in \N$ sufficiently large, 
   \begin{equation}
  \label{6.32}
 ~~~~~~~~     v_N  \, \le    \,   2   \cdot v_{2N}^{\, sym}     \, =    \,   2   \cdot \sigma_{2N}^{\, sym}  
  \end{equation}
  \begin{equation}
  \label{6.33}
  \tau_N   \,   \le \, 4 \cdot \tau^{\,sym}_N (1/2)
\end{equation}
\begin{equation}
  \label{6.34}
 ~~~~  \pi_N   \,   \ge \, (1/2)  \cdot \pi^{\,sym}_N ( 2)\,.
\end{equation}
   \end{lemma}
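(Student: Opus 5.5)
The three inequalities are of different difficulty, so I treat them separately. Throughout, $h^{+},h^{-}$ denote independent copies of $h_1$, $h^{sym}=h^+-h^-$, and $p_N:=\P(|h_1|>N)\to 0$. I also use that, $h^{sym}$ being symmetric, its truncation $h^{sym}\mathbf 1_{\{|h^{sym}|\le 2N\}}$ has mean zero. Hence $v^{sym}_{2N}=\sigma^{sym}_{2N}$, which is the equality in \eqref{6.32}.

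\emph{Inequality \eqref{6.34}.} This is a union bound, valid for every $N$. Write $S_N^{\pm}=\sum_{n\le N}h_n^{\pm}$, so that $S_N^{sym}=S_N^+-S_N^-$. Subtracting and adding $N\delta_N$ gives
\[
\big\{|S_N^{sym}|>2N\big\}\subseteq\big\{|S_N^+-N\delta_N|>N\big\}\cup\big\{|S_N^--N\delta_N|>N\big\}.
\]
Both events on the right have probability $\pi_N$, so $\pi^{sym}_N(2)\le 2\pi_N$.

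\emph{Inequality \eqref{6.33}.} I use a one-sided event:
\[
\P\big(|h^{sym}|>N/2\big)\ge\P\big(|h^+|>N,\ |h^-|\le N/2\big)=p_N\,\big(1-p_{N/2}\big)\ge p_N/2
\]
once $N$ is large enough that $p_{N/2}\le 1/2$. Multiplying by $N$ gives $\tau_N\le 2\,\tau^{sym}_N(1/2)$, which is stronger than \eqref{6.33}.

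\emph{Inequality \eqref{6.32}.} This is the main step. Put $X^{\pm}=h^{\pm}\mathbf 1_{\{|h^{\pm}|\le N\}}$ and $X=X^+$, so that $\mathrm{Var}(X)=\tfrac12\E(X^+-X^-)^2$. I split according to whether both $|h^\pm|\le N$.
\begin{itemize}
\item On $\{|h^+|\le N,\,|h^-|\le N\}$ we have $X^+-X^-=h^{sym}$ with $|h^{sym}|\le 2N$. This part contributes at most $\E\big[(h^{sym})^2\mathbf 1_{\{|h^{sym}|\le 2N\}}\big]=2N\sigma^{sym}_{2N}$.
\item If exactly one of $|h^\pm|$ exceeds $N$, then $X^+-X^-$ equals $\pm$ the other truncated variable. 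By independence this part contributes exactly $2p_N\,\E X^2=2p_N(\mathrm{Var}X+\delta_N^2)$.
\end{itemize}
Combining the two parts gives
\[
(1-p_N)\,\mathrm{Var}(X)\le N\sigma^{sym}_{2N}+p_N\,\delta_N^2 .
\]
The obstacle is the term $p_N\delta_N^2$, since the desired bound is purely multiplicative. To control it, I reuse the splitting in the proof of Lemma \ref{Lem6.4}. Fix $M$ with $p_M$ small; then
\[
\delta_N^2\le 2p_M\,N\sigma_N+2M^2 .
\]
The term $p_N\cdot 2p_MN\sigma_N$ is handled as follows. First, $N\sigma_N=\E X^2=\mathrm{Var}X+\delta_N^2$. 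Second, feeding this identity into the displayed bound on $\delta_N^2$ and taking $p_M<1/4$ gives $\delta_N^2\lesssim \mathrm{Var}X+M^2$. Hence this term is at most a small multiple of $\mathrm{Var}X$, which is absorbed on the left, plus a constant multiple of $p_N M^2$.

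The remaining additive constant $p_NM^2$ is absorbed by nondegeneracy. If $h_1$ is a.s.\ constant, then $v_N=0$ and there is nothing to prove. Otherwise $h^{sym}$ is not a.s.\ zero, so there is a fixed $K$ with $\kappa:=\E[(h^{sym})^2\mathbf 1_{\{|h^{sym}|\le K\}}]>0$. Then for $2N\ge K$ we have $N\sigma^{sym}_{2N}\ge\kappa/2$. Since $p_N\to0$ with $M$ fixed, we get $p_NM^2\le\eta\,\kappa$ for all large $N$. Therefore $\mathrm{Var}(X)\le(1+o(1))\,N\sigma^{sym}_{2N}$, and dividing by $N$ gives $v_N\le 2\sigma^{sym}_{2N}$ for all $N$ sufficiently large.
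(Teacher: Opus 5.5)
Your proof is correct. For \eqref{6.34} it is the paper's union bound. For \eqref{6.33}, your one-sided event $\{|h^+|>N,\ |h^-|\le N/2\}$ is what underlies the paper's appeal to Feller's symmetrization lemma, and you get the constant $2$ rather than $4$.

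For \eqref{6.32} your route genuinely differs from the paper's, and the difference matters. The paper writes a chain of ``equalities'' starting from $N v_N=\E[(h_1^+-\delta_N)^2\mathbf 1_{\{|h_1^+|\le N\}}]$. This omits the contribution $\delta_N^2\,\P(|h_1|>N)$ of the atom at $0$ of the truncated variable. The subsequent symmetrization step likewise omits the cross term $q_N\,\delta_N^2\,\P(|h_1|>N)^2$, where $q_N=1/\P(|h_1|\le N)$. Both omissions go in the wrong direction for an upper bound.

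The bound can in fact fail at the paper's threshold $\P(|h_1|>N_0)<1/2$. Take $\P(h_1=1)=9/10$ and $\P(h_1=L)=1/10$ with $L>3$. Then $N_0=1$ qualifies, yet for $1\le N<(L-1)/2$ one has $v_N=0.09/N>0=\sigma^{sym}_{2N}$.

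Your argument repairs exactly this:
\begin{itemize}
\item Your exact inequality $(1-p_N)\,\mathrm{Var}(X)\le N\sigma^{sym}_{2N}+p_N\delta_N^2$, with your $p_N=\P(|h_1|>N)$, isolates the missing term.
\item The Lemma \ref{Lem6.4}-type splitting turns $p_N\delta_N^2$ into $o(1)\,\mathrm{Var}(X)+O(p_N M^2)$.
\item Your nondegeneracy constant $\kappa>0$ gives the lower bound $N\sigma^{sym}_{2N}\ge\kappa/2$, against which the additive remainder is absorbed.
\end{itemize}
The last step is where the threshold comes to depend on where $h^{sym}$ carries mass, and the example shows it must.

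What you pay is a less explicit threshold: $N\ge M$, $2N\ge K$, and $p_N M^2$ small relative to $\kappa$. That is harmless for the ``$N$ sufficiently large'' form in which the lemma is stated. What you gain is a complete argument, and in fact the sharper asymptotic bound $v_N\le(1+o(1))\,\sigma^{sym}_{2N}$.
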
 
   
   \noindent
   {\it Proof:} With $\delta_N$ as in \eqref{6.6},   integers $N \ge N_0$ such that $\, \P \big( \big| h_1 \big| > N_0 \big) < 1/2\,$ (we note that this $N_0$ depends only on the distribution of $h_1$), and setting $\,q_N := 1 / \P \big( \big| h_1 \big| \le N \big) <  2\,,$ we obtain \eqref{6.32} from   
   $$
   N \,v_N = \E \Big[ \Big( h_1^+ - \delta_N \Big)^2  \mathbf{ 1}_{ \{ | h_1^+ | \le  N \} } \Big] = q_N \cdot \E \Big[ \Big( h_1^+ - \delta_N \Big)^2  \mathbf{ 1}_{ \{ | h_1^+ | \le  N, \,| h_1^-| \le  N \} } \Big]
   $$
   $$
   = \frac{\,q_N\,}{\,2\,} \cdot \E \Big[ \Big( \big( h_1^+ - \delta_N \big) - \big( h_1^- - \delta_N \big) \Big)^2  \mathbf{ 1}_{ \{ | h_1^+| \le  N, \,| h_1^-| \le  N \} } \Big]
   $$
   $$
   \le    \,\E \Big[ \Big( h_1^+ -  h_1^- \Big)^2  \mathbf{ 1}_{ \{ | h_1^+ - h_1^-| \le 2\, N \} } \Big]=  \,\E \Big[ \Big( h_1^{\, sym} \Big)^2  \mathbf{ 1}_{ \{ | h_1^{\, sym}| \le 2\, N \} } \Big]= \, 2\,N\, v_{2N}^{\, sym} \,.
   $$
 
   On the other hand, the technique of symmetrization (cf.\,Lemma 1, p.\,149 in \textsc{Feller} \cite{F}) gives also 
$$
\tau^{\,sym} = N \cdot \P \big( \big| h^+_1 - h^-_1 \big| > N \big)  >\frac{\,N\,}{2} \cdot \P \big( \big| h^+_1   \big| >  2\,N \big) =\frac{\, \tau_N (2)\,}{4}\,, \qquad \forall ~~N \ge N_0
$$
with $\, \P \big( \big| h_1 \big| > N_0 \big) < 1/2\,,$ leading to   \eqref{6.33}.    
       Finally, with $\, S^\pm_N := \sum_{n=1}^N h_n^\pm\,,$ $\, S^{sym}_N := \sum_{n=1}^N h_n^{sym}\,,$   the   inequalities 
   $\,
   \big| \big( S_N^+ / N \big) - D_N \big| \le 1 \,,~    \big| \big( S_N^- / N \big) - D_N \big| \le 1 \,,
   \,$
 lead  to $\,   \big|   S_N^{\,sym} / N    \big| \le 2\,$, and this in turn to the inequality
  $\,
  2 \cdot \P \Big( \big| \big( S_N^+ / N \big) - D_N \big| > 1 \Big)  \, \ge \, \P \big( \, \big|   S_N^{\,sym}      \big| >  2\, N \big) \,,
  $ 
  i.e., \eqref{6.34}. \qed

\subsection{Proof of the Lower Bound (8.6)}
\label{sec6.6}

The inequality   \eqref{6.31} will follow from the following result.

   \begin{lemma}  
  \label{Lem6.10}
Suppose $\, h_1, \cdots, h_N\,$ are independent functions with common, symmetric distribution supported on the interval $\,\big[-N, N\,\big]\,,$ and with $\, \sigma_N = v_N = \E \big( h_1^2 \big) / N >0\,$ in the notation of \eqref{6.8}, \eqref{6.9}. There is then a uniform constant $\,c^{sym}>0$ such that 
\begin{equation}
  \label{6.35}
 \pi_N  = \P  \Big( \,\Big| \sum_{n=1}^N h_n \Big| > N \Big)\,  \ge  \,c^{sym} \cdot    \sigma_N\,=\,c^{sym} \cdot    v_N \,.
\end{equation}
   \end{lemma}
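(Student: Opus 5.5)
The plan is a second-moment / Paley--Zygmund argument applied to $S_N:=\sum_{n=1}^N h_n$. By independence and symmetry (all odd moments vanish), $\E(S_N^2)=N\,\E(h_1^2)=N^2\sigma_N$. Expanding the fourth power and keeping only the terms with even exponents gives
\[
\E(S_N^4)=N\,\E(h_1^4)+3N(N-1)\big(\E(h_1^2)\big)^2\le N^3\,\E(h_1^2)+3N^4\sigma_N^2=N^4\big(\sigma_N+3\sigma_N^2\big),
\]
where $|h_1|\le N$ was used in the form $h_1^4\le N^2h_1^2$. The Paley--Zygmund inequality then yields, for $\theta\in(0,1)$,
\[
\P\big(S_N^2>\theta N^2\sigma_N\big)\ \ge\ (1-\theta)^2\,\frac{(N^2\sigma_N)^2}{N^4(\sigma_N+3\sigma_N^2)}\ =\ (1-\theta)^2\,\frac{\sigma_N}{1+3\sigma_N}.
\]

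\textbf{Large variance regime.} If $\sigma_N\ge 2$, take $\theta=1/2$. Then $\theta N^2\sigma_N\ge N^2$, so the event above is contained in $\{|S_N|>N\}$ (up to the boundary case, handled by a slightly smaller $\theta$). Hence $\pi_N\ge \frac14\,\frac{\sigma_N}{1+3\sigma_N}\ge 1/28$. This is a lower bound of order $\sigma_N\wedge 1$, which is the form actually used in \eqref{6.29} and \eqref{6.31}. Since the statement is applied with $\sigma_N\wedge 1$, the whole task reduces to controlling the range $\sigma_N<2$.

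\textbf{Small variance regime (main obstacle).} For $\sigma_N<2$ the threshold $\theta N^2\sigma_N$ lies below $N^2$, so Paley--Zygmund alone does not reach the event $\{|S_N|>N\}$. My first approach would be a truncation and splitting of $h_1$ into a part of size at most $\eta N$ and a ``big jump'' part of size in $(\eta N,N]$.
\begin{itemize}
\item For the small part, apply a Bernstein/Kolmogorov lower bound to its sum.
\item For the big part, use Feller's lemma (as in the proof of \eqref{6.28}), giving $\P(|S_N|>N)\ge\frac12\,\P(\text{some }|h_n|>N)$, together with symmetry.
\end{itemize}

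I expect exactly this step to fail, and a test case shows why. Take $h_1=\pm N$ with probability $p/2$ each and $0$ otherwise. Then $\sigma_N=Np$, while $|S_N|>N$ forces at least two nonzero summands with a net excess, so $\pi_N\asymp (Np)^2=\sigma_N^2$ as $\sigma_N\to0$. Consequently the linear bound \eqref{6.35} cannot hold uniformly as $\sigma_N\downarrow 0$. What the plan does establish is:
\begin{itemize}
\item the bound \eqref{6.35} for $\sigma_N$ bounded away from zero, via the Paley--Zygmund bound above applied with the threshold adjusted;
\item more generally $\pi_N\ge c\,\sigma_N^2$, which follows from the Paley--Zygmund computation combined with the two-big-jumps mechanism.
\end{itemize}

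For the purposes of Theorem~\ref{Thm6.1}, any bound of the form $\pi_N\ge\varphi(\sigma_N\wedge1)$ with $\varphi$ increasing and $\varphi(0+)=0$ suffices. I would therefore complete the argument in that form and note that \eqref{6.35} as literally stated (linear in $\sigma_N$ for small $\sigma_N$) needs to be weakened accordingly.
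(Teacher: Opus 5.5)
Your refutation is correct, and it is the main point: for $h_1=\pm N$ with probability $p/2$ each, $\sigma_N=Np$ while $\pi_N\le\binom N2p^2\le\sigma_N^2/2$, so no uniform $c^{sym}$ exists. The paper's proof breaks exactly there. In its final display, the $k=1$ term is the event $\{\tau_1=\infty\}$, on which one only knows $|S_N|<N$. That term contributes up to $1-\beta$ to $\sigma_N=N^{-2}\E(S_N^2)$, not something of order $\beta$. What the stopping-time argument actually yields is $\sigma_N\le 1+C\beta$ for $\beta\le\frac12$. This bounds $\beta$ from below only when $\sigma_N>1$, which is the same regime your Paley--Zygmund computation covers. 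The proof also silently replaces $\{|S_N|>N\}$ by $\{|S_N|\ge N\}$. That escapes your two-point example, where one jump then suffices, but not the example below.

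The gaps are in what you claim to salvage. First, Paley--Zygmund does not give \eqref{6.35} for $\sigma_N$ merely bounded away from zero. If $\sigma_N\le 1$, then $\theta\,\E(S_N^2)<N^2$ for every $\theta<1$, so the event it controls never forces $|S_N|>N$.

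Second, the fallback $\pi_N\ge c\,\sigma_N^2$ is false. Take $h_1=\pm\sqrt{sN}$ with probability $\frac12$ each ($N\ge s$), so $\sigma_N=s$. Hoeffding gives $\P(|S_N|\ge N)\le 2e^{-1/(2s)}$, which is smaller than any power of $s$. Two big jumps only account for variance carried by jumps of size comparable to $N$. When the variance is spread over many small jumps, $\pi_N$ is a Gaussian tail at level $\asymp s^{-1/2}$. So any admissible $\varphi$ vanishes faster than every power at $0$, and you give no argument for one.

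What is true, uniformly in the law of $h_1$, is this: for each $s>0$ there are $\eta(s)>0$ and $N_1(s)$ such that $\sigma_N\ge s$ and $N\ge N_1(s)$ imply $\pi_N\ge\eta(s)$. The restriction on $N$ is needed, since $h_1$ uniform on $[-\frac12,\frac12]$ has $\pi_N=0$. This is the kind of bound you rightly say suffices for Theorem~\ref{Thm6.1}, and your abandoned truncation plan proves it.

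Realize $h_n=r_n|h_n|$ with independent Rademacher signs $r_n$, and split at level $\delta N$. Given $(|h_n|)_n$, the small-jump and big-jump sums $S'_N,S''_N$ are independent and symmetric. Hence $\pi_N\ge\frac12\P(|S'_N|>N)$ and $\pi_N\ge\frac12\P(|S''_N|>N)$. There are two cases.
\begin{itemize}
\item If $\E(h_1^2\mathbf{1}_{\{|h_1|\le\delta N\}})\ge Ns/2$, Berry--Esseen applies with error at most $C\delta\sqrt{2/s}$. Once $\delta=\delta(s)$ is small, this gives $\P(S'_N>N)\ge\frac12\P(Z>\sqrt{2/s})$, with $Z$ standard normal.
\item Otherwise $\P(|h_1|>\delta N)\ge s/(2N)$. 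So with probability bounded below in terms of $s$, at least $2/\delta^2$ of the $|h_n|$ exceed $\delta N$. On that event, Paley--Zygmund for the conditional Rademacher sum gives $|S''_N|>N$ with conditional probability at least $\frac1{12}$.
\end{itemize}
Feller's lemma, from your big-part bullet, is vacuous here because $|h_n|\le N$. It is the accumulation of many jumps above $\delta N$ that does the work.
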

    \noindent
   {\it Proof:} We note that $\, S_n := h_1 + \cdots + h_n\,,~ n = 1, \cdots, N\,$ is a     martingale (of its own filtration), and set $\, \tau_1 := \min \big\{ n : \big| S_n \big| \ge N \big\}\,$ with the understanding $\, \min \emptyset = \infty\,.$ With $\, \alpha : = c^{sym} \cdot    \sigma_N  \,$ the right-hand side in \eqref{6.35} and $\,\beta := \P \big( \tau_1 < \infty \big) \,,$ we have to show $\, \beta > 2\,\alpha\,;$ because then symmetry gives   \eqref{6.35}, via $ \pi_N=  \P \big( \big| S_N \big| \ge N \big) \ge  \P \big( \big| S_{\tau_1 \wedge N} \big| \ge N \big) \, \big/\, 2 \,=\, \beta \, /\,2\, >\, \alpha\,.
    $

   \smallskip
   We define now  inductively the   stopping times
   $\,
   \tau_{k+1} := \min \big\{ n  > \tau_k : \big| S_n - S_{\tau_k}\big| \ge N \big\}\,, ~ k \in \N\,
    $
and note   $\, \P \big( \tau_2 < \infty \, \big|\, \tau_1 < \infty ) \le   
\beta \,;$  we rely  on the homogeneity of the increments of $\, \big( S_n\,, \, n=1, \cdots, N \big) \,$ and the fact that, after time $\tau_1\,$, this martingale has less time to reach the barrier $N$ than when starting at  
$\, \tau_0=0\,.$ We obtain thus $\,\P \big( \tau_2 < \infty \big) \le \beta^2\,;$ as well as $\,\P \big( \tau_k < \infty \big) \le \beta^k\,, ~ k \in \N_0\,$ by  an obvious induction, and setting $\,\tau_0 \equiv 0\,.$

Now, $\, \big|   S_{\tau_1  \wedge N} \big| = \big| S_N \big| \le N \,$ holds on $\, \big\{ \tau_1 = \infty \big\}\,;$  the assumption    $\, \P \big( \big|h_1\big| \le N \big) =1\,$   implies $\, \big|   S_{\tau_k  \wedge N} \big| = \big| S_N \big| \le \big( 2\,k -1 \big)\,N \,$  on $\, \big\{ \tau_{k-1} < \infty\,, \tau_k = \infty \big\}\,;$ and $\, \big|   S_{N} - S_{ \tau_{k-1}} \big| \le   \, N\,$ holds on $\, \big\{ \tau_k = \infty \big\}\,.$ Writing 
 $
\,S_N = \sum_{k \in \N_0} \, S_N \cdot \mathbf{1}_{ \{ \tau_{k-1} < \infty\,,\, \tau_k = \infty \} }\,,
 $
 we  obtain  the inequality $\, \beta > 2\,\alpha\, $ from 
$\,
 \sigma_N\, = \,  N^{-2} \, \E(S_N^2)    \le \big( 1 - \beta \big) \sum_{k \in \N_0} \beta^{k-1}\,        \big( 2\,k -1 \big)^2 <     \beta \,/\, \big(  2\,c^{sym} \big)\,  $   for a suitable constant $\,c^{sym}>0\,,$ as long as   $\,\beta\,$ is  bounded away from 1\,. \qed

\section{The Proof of Proposition  \ref{Prop6.3}}
\label{sec6.44}

The idea  behind the proof,  is the familiar theme of ``gliding humps". In its simplest form, this is reflected in 
the following arch-example of functions $\, f_1, f_2, \cdots\,$  in $\, \Ll^0 \big( [0,1)\big)\,$ given by 
$$
f_{2^m +j} \,:= \,\mathbf{1}_{\,{\cal I}_{m,j}}\quad~ \text{where} \quad~ {\cal I}_{m,j}\,:=\, \big[ \, \big(j-1\big) \, 2^{-m},  j\, 2^{-m}\, \big)\,; \qquad m \in \N_0\,,~ ~j=1, \cdots, 2^m\,.
$$
These converge to zero in probability, but not a.e. By defining instead $\,
f_{2^m +j}^{\, \dagger} \,:= \,m \cdot \mathbf{1}_{\,{\cal I}_{m,j}}\,,$ we obtain a sequence  converging to zero in probability, yet with $\, \overline{\lim}_{n \to \infty} \big| f_n^{\, \dagger} (\omega) \big| = \infty\,$ for each $\,\omega \in [0,1)\,.$

\subsection{Proof of Proposition  \ref{Prop6.3}\,(${\mathfrak A}$)}
\label{sec6.44b}

Denoting by $\,  h^\alpha, h^\alpha_1, h^\alpha_2, \cdots\,$ an I.I.D.\,\,sequence with $\, \P ( h^\alpha = \pm N_\alpha) = \alpha / 2\,,$   $\, \P ( h^\alpha =0) = 1 - \alpha \,,$ and $\, N_\alpha = 1 / \alpha \,$ a large natural number,   
$\,
\P \Big( \Big| \sum_{n=1}^{N_\alpha} h^\alpha_n \Big| \ge N_\alpha \Big) \, > \, C\,
 $
holds for some universal constant $C \in (0,1)\,.$ 
 We may also find, for given $\eps >0\,,$ a constant $\, \rho = \rho (\eps) \in (0,1)\,$ such that,   for each  $\,\alpha >0\,,$ we have  
\begin{equation}
  \label{W1}
\P \Big( \Big| \sum_{n=1}^{N } h^\alpha_n \Big| > \eps \cdot N  \Big)\,<\, \eps\,, \qquad \text{for all} \quad N \notin \big[ \,\rho \,N_\alpha\,, (1 / \rho ) \,N_\alpha \, \big]\,.
\end{equation}
More generally, for a sequence $\,  \big( N_{\alpha_k} \big)_{k \in \N} \subset \N\,$   increasing to infinity so  fast  that   $\, \sum_{k \in \N} ( 1 / N_{\alpha_k} ) $ $< 1\,,$  and for the sequence $\,{\bm \alpha} :=  \big( \alpha_k  \big)_{k \in \N}\,$ with  $\, \alpha_k := 1 / N_{\alpha_k}\,,$ consider   I.I.D.  random variables \begin{equation}
  \label{W2}
 \text{$ h^{({\bm \alpha})}, h^{({\bm \alpha})}_1, h^{({\bm \alpha})}_2, \cdots\,$ with} \quad  \P \big( h^{({\bm \alpha})} = \pm N_{\alpha_k} \big) = \alpha_k / 2\, ~~\text{for}~~ k \in \N\,,  \quad  \P \big( h^{({\bm \alpha})} =0 \big) = 1 - \sum_{k \in \N} \alpha_k \, .
  \end{equation}
 
 We may choose   a sequence $\big( N_{\alpha_k},\, k \in \N \big) \,,$ increasing sufficiently fast to infinity, so that the intervals
\begin{equation}
  \label{W1W}
 {\cal L}_k \,:= \,\Big\{ N \in \N \,:\, \, \P \Big( \,\Big| \sum_{n=1}^N h^{\alpha_k}_n \Big| > N\,2^{-k} \Big) < 2^{-k} \Big\}
  \end{equation}
  are disjoint. Now, for each $\, x \in [0,1)\,,$ we let $\,j^{(m)}  (x)\,$ be the number in $\, \{ 1, \cdots, 2^m \}$ with  the property   $\, \big( \, j^{(m)}  (x) - 1 \big) \,2^{-m} \le x <  j^{(m)}  (x) \, 2^{-m}\,.$ Setting $\, k_m (x):= 2^m + j^{(m)}  (x)\,,$ we consider the sequence $\, \big( \alpha_{k_m (x)} \big)_{m \in \N}\,,$ and let  $\,  h^x, h^x_1, h^x_2, \cdots\,$ be the I.I.D.\,\,sequence attached to this subsequence as in \eqref{W2}. 
  
  Fixing $x \in [0,1]$ we thus have that the quantities $\, N_{\alpha_{k_m (x)}}\,\P \big( \big| h^x_1 \big| \ge N_{\alpha_{k_m (x)}} \big)\,$ as well as  the quantities $\, N^{\,-1}_{\alpha_{k_m (x)}}\,\E \Big( \big( h^x_{  1} \big)^2 {\bm 1}_{ \{ | h^x_1| \le N_{\alpha_{k_m (x)}} \} }  \Big)\,$ remain bounded away  from zero as $m \to \infty\,.$ On the other hand, when we replace in these two expressions $N_{\alpha_{k_m (x)}}$ by $N \in \N \,,$ they tend to zero as $N \to \infty\,$ except for the case where $N$ is  in one of the intervals $\, {\cal L}_{k_m (x)} \,$\ defined in and below \eqref{W1W}.  \qed

\subsection{Proof of Proposition  \ref{Prop6.3}\,(${\mathfrak B}$)}
\label{sec6.44c}

As in the proof of part (${\mathfrak A}$)\,, we construct inductively an increasing sequence $\, \big( N_k \big) = \big( N_{2^m +j} \big) \subset \N\,.$   Fixing $m \in \N$ and $\, k = 2^m +j\,$ we define also random variables $h^k$  with values in $\, \{ \pm N_k, 0 \}\,,$    with $\P \big( h^k = \pm N_k \big) = p_m / 2\,$ and $m$  sufficiently small, to wit, $\,p_m < 2^{-m}\,.$

\smallskip
The inductive step $\, m-1 \longmapsto m\,$ of this construction is described in the following Lemma, where we assume $\,N^{(m-1)}_{2^{m-1}+1}\,$ has already been defined.

 \begin{lemma} 
  \label{Lem6.8z}
For $m \in \N\,,$ there exist   integers $\,   N_{2^{m-1}+1}^{(m-1)}<  N_{1}^{(m)} <  \cdots < N_{{2^m} }^{(m)}< N_{{2^m}+1}^{(m)}\,$ and numbers $\, p_m \in (0, 2^{-m})\,,$ such that, for  $h^m_j\,$ taking  values in $\, \{ \pm N_j^{(m)}, 0 \}\,$ with $\, \P \big( h^m_j = \pm N_{j}^{(m)} \big) = p_m / 2\,,$ and setting  
\begin{equation}
  \label{W1z}
\tau^{(m,j)}_N \,:= \,N \, \cdot \, \P \, \big( \big| h_j^m \big| \ge N \big) \,, ~~~~~~~\sigma^{(m,j)}_N \,:= \,  \E \, \Big( \big(h_j^m\big)^2 \, \mathbf{1}_{ \{ | h_j^m | \le N \} } \Big) \Big/ \,N \,,
\end{equation}
we have 

 \medskip
  \noindent
 $(i)~~\tau^{(m,j)}_{N_1^{(m)}  } =m\,$ ~for $~~\, j=1, \cdots, 2^m\,,$ 
  
  \smallskip
  \noindent
  $(ii)~~\sigma^{(m,j)}_{N } < 2^{-m}\,$  ~for $~~\, j=1, \cdots, 2^m\,,$  $ N \notin \big[\,N_j^{(m)}, ~N_{j+1}^{(m)}\,\big[\,.$
      \end{lemma}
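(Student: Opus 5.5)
The plan is a direct inductive choice of the integers. The point is that for a three-point variable $h^m_j\in\{\pm N^{(m)}_j,0\}$ both quantities in \eqref{W1z} have closed forms. Since $\P(|h^m_j|\ge N)=p_m$ for $N\le N^{(m)}_j$ and $=0$ otherwise, we get
$$
\tau^{(m,j)}_N = N\,p_m\,\mathbf{1}_{\{N\le N^{(m)}_j\}}\,, \qquad \sigma^{(m,j)}_N = \frac{p_m\,\big(N^{(m)}_j\big)^2}{N}\,\mathbf{1}_{\{N\ge N^{(m)}_j\}}\,.
$$
So (i) and (ii) reduce to elementary inequalities among $p_m$ and the $N^{(m)}_j$, and I would fix these numbers in the following order.

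First, I choose $N^{(m)}_1$ as an integer with $N^{(m)}_1> N^{(m-1)}_{2^{m-1}+1}$ and $N^{(m)}_1> m\,2^m$. I then set $p_m:=m/N^{(m)}_1$, so that $p_m\in(0,2^{-m})$. Since $N^{(m)}_1\le N^{(m)}_j$ for every $j\ge 1$, the formula above gives $\tau^{(m,j)}_{N^{(m)}_1}=N^{(m)}_1\,p_m=m$ for all $j=1,\cdots,2^m$. This is (i), and it holds simultaneously for all $j$ precisely because the same $p_m$ is used for every $j$ and all later $N^{(m)}_j$ are larger.

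Next, I define inductively, for $j=1,\cdots,2^m$, an integer
$$
N^{(m)}_{j+1}> \max\Big( N^{(m)}_j\,,\; 2^m\,p_m\,\big(N^{(m)}_j\big)^2 \Big)\,.
$$
To verify (ii), fix $j$ and take $N\notin[N^{(m)}_j,N^{(m)}_{j+1}[$. If $N<N^{(m)}_j$, then $\sigma^{(m,j)}_N=0$. If $N\ge N^{(m)}_{j+1}$, then
$$
\sigma^{(m,j)}_N = \frac{p_m\,\big(N^{(m)}_j\big)^2}{N}\le \frac{p_m\,\big(N^{(m)}_j\big)^2}{N^{(m)}_{j+1}}<2^{-m}\,.
$$
The required ordering $N^{(m-1)}_{2^{m-1}+1}<N^{(m)}_1<\cdots<N^{(m)}_{2^m+1}$ holds by construction. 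The well-definedness of the laws of the $h^m_j$ is immediate, since $p_m<1$.

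I do not expect a real obstacle. The only delicate point is ordering the choices correctly. The value $p_m$ must be fixed once, from $N^{(m)}_1$, before any $N^{(m)}_{j+1}$ is chosen, because the growth condition on $N^{(m)}_{j+1}$ involves $p_m$. Conversely, (i) only constrains $N^{(m)}_1$ and $p_m$, so it is unaffected by the later, arbitrarily fast growth of the $N^{(m)}_{j+1}$.
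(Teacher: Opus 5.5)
Your proof is correct and follows the same route as the paper. You fix $N^{(m)}_1 > N^{(m-1)}_{2^{m-1}+1}$ large and set the common value $p_m := m/N^{(m)}_1$, which gives (i) for every $j$; you then choose each $N^{(m)}_{j+1}$ large enough that $\sigma^{(m,j)}_N = p_m (N^{(m)}_j)^2/N < 2^{-m}$ for $N \ge N^{(m)}_{j+1}$, using that $\sigma^{(m,j)}_N = 0$ for $N < N^{(m)}_j$, and your explicit thresholds $N^{(m)}_1 > m\,2^m$ and $N^{(m)}_{j+1} > 2^m p_m (N^{(m)}_j)^2$ just make the paper's ``big enough'' choices quantitative.
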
 
      
       \noindent
   {\it Proof:} We choose $\,     N_{1}^{(m)} > N_{2^{m-1}+1}^{(m-1)}\,$ big enough, so that $\, p_m = m\, / \,N_{1}^{(m)} < 2^{-m}\,$; then $(i)$ follows for $j=1\,$. We note also that  $\,\sigma^{(m,1)}_{N }= \big( N_{1}^{(m)} \big)^2 \, p_m \,/\,N\,$ tends to zero as $N \to \infty\,,$ so we can find $\,     N_{2}^{(m)}\,$ with  $\,\sigma^{(m,1)}_{N_{2}^{(m)} }\,$   smaller than $2^{-m}.$ This last property holds also for all $\, N \ge N_{2}^{(m)} \,,$ while we have $\,\sigma^{(m,1)}_{N}=0\,$   for $\,N < N_{1}^{(m)}$; thus, the claim $(i)$ is established for $j=1\,.$ 
   
   We define $h^m_2$ to take values in $\, \big\{ \pm N_{2}^{(m)}, 0 \big\}$ with  $\P \big( h^m_2 = \pm N_2 \big) = p_m / 2\,,$ and observe that $(i)$ also holds for $j=2\,,$ i.e., $\,\tau^{(m,2)}_{N_1^{(m)}  } =m\,.$ Next, we choose $\,N_3^{(m)}  \,$ big enough, so that $(ii)$  holds   for $j=2\,.$  
   
   Continuing in an obvious manner, we  select $\,      N_{1}^{(m)} <  \cdots < N_{{2^m} }^{(m)} \,;$ as well as $N_{{2^m}+1}^{(m)}\,,$ whose only role is to make sure that in the   next     inductive step $\, m  \longmapsto m +1\,$ of this construction,   the number $N_{ 1}^{(m+1)}\,$ is big enough so   $\,\sigma^{(m,2^m)}_{N } < 2^{-m}\,$ holds for $\,N \ge N_{ 1}^{(m+1)}\,$.
   
   This completes the inductive step, and establishes the Lemma. \qed

    \medskip
   As in subsection \ref{sec6.44b}, we arrange  now   these   $\, \Big( \big( N_j^{(m)} \big)_{j=1}^{2^m} \Big)_{m  \in \N_0}\,$ as 
  $\, \big( N_k \big)_{k=2}^{\infty} =\Big( \big( N_{j + 2^m} \big)_{j=1}^{2^m} \Big)_{m  \in \N_0} \,$; associate  to each  $\,x \in [0,1)\, $ the integer $\,J^{(m)} (x)\,$ as in subsection \ref{sec6.44b} above; and let   $\, \big( h^x_n \big)_{n \in \N}\,$ be I.I.D. taking the values $\, 0, \, \pm N_{J^{(1)} (x)}^{(1)}, \, \pm N_{J^{(2)} (x)}^{(2)} , \cdots\,$ on disjoint sets,  
  with $\, \P \big( h^x = \pm N_{J^{(m)} (x)}^{(m)} \big) = p_m / 2\,,~ m \in \N\,.$ This latter is possible, because $\, p_m <2^{-m}\,.$ Once again, we   consider       exchangeable $\,  g^X_1,g^X_2,\cdots\,$ with the   property that,    for $X$ uniformly distributed  on [0,1), the conditional distribution of $\,   g^X_1,g^X_2,\cdots\,,$ given $\, \{ \,X = x\, \}\,,$ is that of the sequence $\,    h^x_1, h^x_2, \cdots\,$.
 
 \smallskip
 For each $\,m \in \N\,$ we consider the first element $\,\widehat{N}_M := N_{2^m+1}= N^{(m)}_1\,$ of the above inductive construction.   Lemma \ref{Lem6.8z}\,$(i)$ shows that $\,\widehat{N}_M \cdot \P \big( \big| g^X_{m} \big| \ge \widehat{N}_M \, \big|\, X=x \big) \ge \widehat{N}_M \cdot \P \big( \big| h^x_{m} \big| \ge \widehat{N}_M   \big) = m\,$ holds  for each $x \in [0,1).$  On the other hand, for every integer $N$ between $ N_1^{(m)}$ and $ N_1^{(m+1)},$ there is at most one $\widehat{j} \in \big\{ 1, \cdots, 2^m \big\}$ so  that, for $x \in \big[ \big( \,\widehat{j} - 1 \big)  \,2^{-m}, \,\widehat{j}  \,2^{-m} \big)$, 
  $\sigma^{(m,\widehat{j})}_{N }$ of   \eqref{W1z} is big; while, for   $j \neq \widehat{j}$, we have $\sigma^{(m,j)}_{N } < 2^{-m}\,.$ This   leads to $(ii)$, and completes the proof of part (${\mathfrak B}$). \qed

\subsection{Proof of Proposition  \ref{Prop6.3}\,(${\mathfrak C}$)}

We carry out   a construction similar to that in the proof of part (${\mathfrak B}$)\,, defining   inductively an increasing sequence $\, \big( N_k \big) = \big( N_{2^m +j} \big) \,$ of natural numbers. Fixing again $m \in \N$ we shall construct this time $\, \big( N_k \big)_{k=2}^{\infty}\, := \Big( \big( N_{2^m +j} \big)_{j=1}^{2^m} \Big)_{m  \in \N } \equiv 
\Big( \big( N_j^{(m)} \big)_{j=1}^{2^m} \Big)_{m  \in \N }\,$ via {\it backwards  induction:} 

We   choose first a (very large) number $N_{2^m  }^{(m)} \,,$ then inductively a decreasing string of natural numbers $\,N_{{2^m} }^{(m)} > N_{{2^m} -1}^{(m)} > \cdots > N_{ 1}^{(m)} \,,$ all the while making sure that $N_{ 1}^{(m)}$ is  bigger than a threshold $N_{2^{m-1}}^{(m-1)}$ given from the previous inductive step $(m-1)$. 

 \begin{lemma} 
  \label{Lem6.6z}
Consider natural numbers $\,m$ and $\,N_{2^{m-1}}^{(m-1)}\,,$ as well as a number $\,\delta >0$ and a rational $\eps >0\,.$    There exist then natural numbers $\,N_{ 2^{\,m -1}}^{(m-1)} < N_{1}^{(m)} < \cdots < N_{{2^m} }^{(m)} \,,$ and positive numbers $\, p^{(m)}_1, \cdots, p^{(m)}_{2^m}\,$ with $\, \sum_{j=1}^{2^m} p^{(m)}_j < \delta\,,$ such that 
\begin{equation}
  \label{W2z}
 p^{(m)}_j \, \Big( N^{(m)}_j \Big)^2 \,=\, m \cdot  N^{(m)}_{2^m}\,,\qquad j=1, \cdots, 2^m\,,
\end{equation}
\begin{equation}
  \label{W3z}
p^{(m)}_j \,  N^{(m)}_{j-1} \,=\, \eps\,,\qquad j=	2, \cdots, 2^m\,.
\end{equation}
   \end{lemma}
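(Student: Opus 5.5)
The plan is to solve \eqref{W2z}--\eqref{W3z} explicitly by backward recursion from the top index $T:=2^m$. All the $N^{(m)}_j$ will then be fixed rational multiples of the single free parameter $N^{(m)}_{T}$, and taking that parameter large will secure every remaining requirement. Throughout I assume $\eps < m$. Since $\eps$ is meant to be small this costs nothing; if necessary one replaces $\eps$ by a smaller rational, and $m \ge 1$.

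\emph{The recursion.} Write $N_j := N^{(m)}_j$ and $p_j := p^{(m)}_j$. Condition \eqref{W2z} forces
$$
p_j = m\, N_T / N_j^2 .
$$
Substituting this into \eqref{W3z} gives
$$
N_{j-1} = \eps / p_j = \eps\, N_j^2 / (m\, N_T), \qquad j = T, T-1, \cdots, 2 .
$$
Set $x_j := N_j / N_T$ and $r := \eps/m \in (0,1)$. The recursion becomes $x_{j-1} = r\, x_j^2$ with $x_T = 1$. Induction then gives
$$
x_{T-k} = r^{\,2^k - 1}, \qquad N_j = r^{\,2^{T-j}-1}\, N_T, \qquad p_j = \frac{m}{N_T}\, r^{-2(2^{T-j}-1)} .
$$
Because $r<1$, the sequence $N_1 < N_2 < \cdots < N_T$ is strictly increasing. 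Conversely, with these choices both \eqref{W2z} and \eqref{W3z} hold identically, for every value of $N_T$.

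\emph{Integrality.} This is the only place where the rationality of $\eps$ enters. Write $r = a/b$ in lowest terms. The largest power of $r$ that occurs is $r^{\,2^{T-1}-1}$, so we take $N_T$ to be a multiple of $b^{\,2^{T-1}-1}$. Then every $N_j = a^{\,2^{T-j}-1}\, b^{-(2^{T-j}-1)}\, N_T$ is a natural number.

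\emph{Largeness.} Keep $m$, $\eps$, $\delta$ and the threshold $N^{(m-1)}_{2^{m-1}}$ fixed, and let $N_T \to \infty$ through such multiples. On one side,
$$
N_1 = r^{\,2^{T-1}-1}\, N_T \to \infty ,
$$
so eventually $N_1 > N^{(m-1)}_{2^{m-1}}$. On the other side,
$$
\sum_{j=1}^{T} p_j = \frac{m}{N_T} \sum_{j=1}^{T} r^{-2(2^{T-j}-1)} \to 0 ,
$$
since this is a fixed finite sum divided by $N_T$. So eventually $\sum_j p_j < \delta$, and in particular each $p_j < 1$, making these legitimate probabilities. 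Fixing any such $N_T$ completes the construction.

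I do not expect a genuine obstacle. The only care needed is to see the squaring structure $x_{j-1} = r\,x_j^2$, which produces the doubly exponential exponents $2^{T-j}-1$, and then to handle integrality through the rationality of $\eps$. The assumption $\eps < m$, needed for monotonicity of the $N_j$, should be stated explicitly.
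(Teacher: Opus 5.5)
Your proposal is correct and follows the paper's proof: you solve \eqref{W2z}--\eqref{W3z} backwards to get $N^{(m)}_j = (\eps/m)^{2^{2^m-j}-1} N^{(m)}_{2^m}$, then take $N^{(m)}_{2^m}$ to be a large suitable multiple so that every $N^{(m)}_j$ is an integer, $N^{(m)}_1$ exceeds the threshold, and $\sum_j p^{(m)}_j<\delta$. Your formula $p^{(m)}_j = (m/N^{(m)}_{2^m})(m/\eps)^{2(2^{2^m-j}-1)}$ is the correct one (the paper's displayed $p^{(m)}_j$ drops the $j$-dependence and is only an upper bound), and you are right that $\eps<m$ must be assumed rather than circumvented: for $\eps\ge m$ the relations force $N^{(m)}_{2^m-1}=\eps N^{(m)}_{2^m}/m\ge N^{(m)}_{2^m}$, so the statement as written fails.
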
 
   \noindent
 {\it Proof:} The above inductive relationships   \eqref{W2z},\,\eqref{W3z} hold if, and only if, the pairs $\, \big( N^{(m)}_j, p^{(m)}_j \big)\,,~ j=1, \cdots, 2^m\,$ satisfy the following explicit relations:
$$
N^{(m)}_j = N^{(m)}_{2^m} \Big( \frac{\eps}{m} \Big)^{2^{\,2^m -j} -1 }\,, ~~~~ p^{(m)}_j = \frac{m}{N^{(m)}_{2^m}} \Big( \frac{m}{\eps} \Big)^{2^{\,2^m  }   }  \,,\qquad j=1, \cdots, 2^m\,.
$$ 
 As $\eps >0$ is assumed rational, we can choose  a sufficiently large multiple $\, N^{(m)}_{2^m}\,$ of $\big(  m\,/\eps  \big)^{2^{\,2^m  }   } \,,$ so that $\,N^{(m)}_1\,$ is a natural number bigger than the given  $\,N_{ 2^{\,m -1}}^{(m-1)}\,$ and such that 
  $\, \sum_{j=1}^{2^m} p^{(m)}_j < \delta\,$ holds. \qed
 
 \medskip
 As in the proof of part (${\mathfrak B}$), the Lemma that follows describes the inductive step $\, m-1 \longmapsto m\,,$  where we assume that $\,N^{(m-1)}_{2^{m-1} }\,$ has already been defined.

 \begin{lemma} 
  \label{Lem6.9z}
For $\,m \in \N\,$ and $\, (\eps, \delta) \in (0, \infty)^2\,,$ there exist   integers $\,   N_{2^{m-1} }^{(m-1)}<  N_{1}^{(m)} <  \cdots < N_{{2^m} }^{(m)} \,,$ as well as positive numbers $\, p^{(m)}_1, \cdots, p^{(m)}_{2^m}\,$ with $\, \sum_{j=1}^{2^m} p^{(m)}_j < \delta\,,$ and random variables $\,h^m_j\,$ with  values in $\, \big\{ \pm N_j^{(m)}, 0 \big\}\,$ and   $\, \P \big( h^m_j = \pm N_{j}^{(m)} \big) = p^{(m)}_j / 2\,,$ 
so that  
\begin{equation}
  \label{W1za}
\tau^{(m,j)}_N \,:= \,N \, \cdot \, \P \, \big( \big| h_j^m \big| > N \big) \,, ~~~~~~~\sigma^{(m,j)}_N \,:= \,  \E \, \Big( \big(h_j^m\big)^2 \, \mathbf{1}_{ \{ | h_j^m | \le N \} } \Big) \Big/ \,N 
\end{equation}
satisfy  

 \medskip
  \noindent
 $(i)~~\tau^{(m,j)}_{N   } =0\,$ ~~for $~\, j=1, \cdots, 2^m\,,~~ N \ge N_j^{(m)}\,,$ 
 
  \smallskip
  \noindent
 $(i)^{\prime}~~\tau^{(m,j)}_{N  } \le \eps^\ell \cdot \tau^{(m,j+\ell)}_{N  } \,$ ~~for $~\, j=1, \cdots, 2^m\,,~\, \ell=1, \cdots, 2^m -j\,,~~ N \le N_j^{(m)}\,,$ 
  
  \smallskip
  \noindent
  $(ii)~~\sigma^{(m,j)}_{N } = \big( N_{2^m}^{(m)} \big)^{-1} p^{(m)}_j \, \big( N_{j}^{(m)} \big)^{2} = m\,$  ~~for $~\, j=1, \cdots, 2^m\,,~ N = N_{2^m}^{(m)} \,.$
      \end{lemma}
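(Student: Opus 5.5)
Lemma \ref{Lem6.9z} follows from Lemma \ref{Lem6.6z}. I will take the integers $N_j^{(m)}$ and the weights $p_j^{(m)}$ given there, with $\sum_j p_j^{(m)}<\delta$ and the relations \eqref{W2z}, \eqref{W3z}. I will also take $\eps$ rational and, without loss of generality, $\eps<1$; any smaller rational works, since the conclusions only become stronger. I then define $h^m_j$ to take the values $\pm N_j^{(m)}$ with probability $p_j^{(m)}/2$ each, and $0$ otherwise. This is a legitimate distribution because $p_j^{(m)}<\delta$, and I may assume $\delta\le 1$.

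Claim (i) is immediate from the strict inequality in \eqref{W1za}. For $N\ge N_j^{(m)}$ the event $\{|h^m_j|>N\}$ is empty, so $\tau^{(m,j)}_N=0$.

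Claim (ii) is a one-line computation. Since $N_j^{(m)}\le N_{2^m}^{(m)}$, the indicator $\mathbf 1_{\{|h_j^m|\le N_{2^m}^{(m)}\}}$ equals $1$ almost surely. Hence
\[
\sigma^{(m,j)}_{N_{2^m}^{(m)}}=p_j^{(m)}\big(N_j^{(m)}\big)^2\big/N_{2^m}^{(m)},
\]
and by \eqref{W2z} this equals $m$.

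Claim (i)$'$ is the only step needing care. For $N<N_j^{(m)}$ (and hence $N<N_{j+\ell}^{(m)}$ as well) we have $\tau^{(m,i)}_N=N\,p_i^{(m)}$ for both $i=j$ and $i=j+\ell$. The boundary case $N=N_j^{(m)}$ gives $\tau^{(m,j)}_N=0$, so there is nothing to prove. The inequality therefore reduces to showing $p_j^{(m)}\le \eps^\ell\,p_{j+\ell}^{(m)}$, and by induction on $\ell$ it suffices to prove $p_j^{(m)}\le\eps\,p_{j+1}^{(m)}$.

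To get this, I combine \eqref{W2z} for the indices $j$ and $j+1$ to obtain $p_j^{(m)}/p_{j+1}^{(m)}=(N_{j+1}^{(m)}/N_j^{(m)})^2$. Then \eqref{W3z} gives $N_j^{(m)}=\eps/p_{j+1}^{(m)}$, so the required inequality becomes a growth condition on the $N$'s relative to $\eps$. That condition should hold by the explicit formulas in the proof of Lemma \ref{Lem6.6z}, whose doubly exponential ratios $(m/\eps)^{2^{\cdot}}$ make consecutive ratios as large as needed once $m/\eps$ is large.

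This last verification is the main obstacle, because the explicit formulas in Lemma \ref{Lem6.6z} look inconsistent as printed: the stated $p_j^{(m)}$ does not depend on $j$. My plan is to re-derive the pairs $(N_j^{(m)},p_j^{(m)})$ directly from \eqref{W2z} and \eqref{W3z} by backward recursion from $N_{2^m}^{(m)}$. If the direction of the monotonicity turns out to be reversed, I will replace $\eps$ by a sufficiently small rational or adjust $N_{2^m}^{(m)}$ so that the ratios $p_{j+1}^{(m)}/p_j^{(m)}$ are at least $1/\eps$. I will keep $N_{2^m}^{(m)}$ a large multiple of the relevant power of $m/\eps$, which preserves integrality, $N_1^{(m)}>N_{2^{m-1}}^{(m-1)}$, and $\sum_j p_j^{(m)}<\delta$.
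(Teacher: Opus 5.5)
Your arguments for (i) and (ii) are correct. So is your reduction of (i)$'$ to $p_j^{(m)}\le\eps^\ell p_{j+\ell}^{(m)}$: for $0<N<N_j^{(m)}$ the two sides of (i)$'$ are $N$ times these weights. You are also right that the printed formula for $p_j^{(m)}$ in Lemma~\ref{Lem6.6z} is wrong.

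The gap is the next step, and it cannot be closed. The identity you extract from \eqref{W2z}, $p_j^{(m)}/p_{j+1}^{(m)}=\big(N_{j+1}^{(m)}/N_j^{(m)}\big)^2$, is strictly larger than $1$, because the $N_j^{(m)}$ increase strictly. So your ``growth condition'' $\big(N_{j+1}^{(m)}/N_j^{(m)}\big)^2\le\eps$ would force $N_{j+1}^{(m)}<N_j^{(m)}$ whenever $\eps\le1$. This is not an artifact of Lemma~\ref{Lem6.6z}: (ii) alone forces $p_j^{(m)}=m\,N_{2^m}^{(m)}/\big(N_j^{(m)}\big)^2$. Hence, for $\eps\le1$, (i)$'$ with $j=\ell=1$ and any $N\in(0,N_1^{(m)})$ fails for every choice of parameters satisfying (ii).

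Your normalization ``without loss of generality $\eps<1$'' therefore lands you exactly where the statement is false. Your fallback cannot rescue it either. Decreasing $\eps$ only makes $p_j^{(m)}\le\eps\,p_{j+1}^{(m)}$ harder, and no adjustment of $N_{2^m}^{(m)}$ can raise $p_{j+1}^{(m)}/p_j^{(m)}=\big(N_j^{(m)}/N_{j+1}^{(m)}\big)^2<1$ to $1/\eps>1$. Carrying out the backward recursion you propose gives $N_j^{(m)}=N_{2^m}^{(m)}(\eps/m)^{2^{2^m-j}-1}$ and $p_j^{(m)}=\frac{m}{N_{2^m}^{(m)}}(m/\eps)^{2^{2^m-j+1}-2}$. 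Thus $p_j^{(m)}/p_{j+1}^{(m)}=(m/\eps)^{2^{2^m-j}}$, which is enormous rather than $\le\eps$.

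What \eqref{W2z} and \eqref{W3z} actually give is the reversed comparison. From them, $p_{j+1}^{(m)}/p_j^{(m)}=\eps\,N_j^{(m)}/\big(m\,N_{2^m}^{(m)}\big)\le\eps$. Hence $p_{j+\ell}^{(m)}\le\eps^\ell p_j^{(m)}$, and $\tau_N^{(m,j+\ell)}\le\eps^\ell\,\tau_N^{(m,j)}$ for $N<N_j^{(m)}$. Straight from \eqref{W3z} you also get $\tau_N^{(m,\ell)}=N\,p_\ell^{(m)}\le N_{\ell-1}^{(m)}p_\ell^{(m)}=\eps$ for $N\le N_{\ell-1}^{(m)}$. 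This is the kind of bound invoked in the proof of Proposition~\ref{Prop6.3}\,(${\mathfrak C}$)(b).

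The paper's one-line proof refers to a non-existent item (iii) and appears to swap the roles of \eqref{W2z} and \eqref{W3z}. It amounts to this same iteration, so it does not establish (i)$'$ as printed either. In the printed (i)$'$, $j$ and $j+\ell$ are interchanged, and as written it is incompatible with (ii) unless $\eps>1$. Rather than hunting for better parameters, you should prove the corrected inequality and state explicitly that the printed one is false.
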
 
      
       \noindent
   {\it Proof:} Part $(i)$ is obvious;   $(ii)$ follows from \eqref{W3z};  and $(iii)$   from applying \eqref{W2z} inductively. \qed

 \medskip
 
   Again as in subsection \ref{sec6.44b}, we arrange      the    $\, \Big( \big( N_j^{(m)} \big)_{j=1}^{2^m} \Big)_{m  \in \N_0}\,$ as 
  $\, \big( N_k \big)_{k=2}^{\infty} =\Big( \big( N_{j + 2^m} \big)_{j=1}^{2^m} \Big)_{m  \in \N_0} \,$; associate  to each  $\,x \in [0,1)\, $ an integer $\,J^{(m)} (x)\,$; and let   $\, \big( h^x_n \big)_{n \in \N}\,$ be I.I.D. taking the values $\, 0, \, \pm N_{J^{(1)} (x)}^{(1)}, \, \pm N_{J^{(2)} (x)}^{(2)} , \cdots\,$ on disjoint sets,    with $\, \P \big( h^x = \pm N_{J^{(m)} (x)}^{(m)} \big) = p_m / 2\,,~ m \in \N\,.$ We   summon   also    exchangeable $\,  g^X_1,g^X_2,\cdots\,$ with the   property that,    for $X$ uniformly distributed  on [0,1), the conditional distribution of $\,   g^X_1,g^X_2,\cdots\,,$ given $\, \{ \,X = x\, \}\,,$ is that of the sequence $\,    h^x_1, h^x_2, \cdots\,$.

\smallskip
For each $m \in \N$ we consider now the last element $\, M(m)  :=   N_{{2^m} }^{(m)}\,$ of the above string $\,N_{1}^{(m)} < \cdots < N_{{2^m} }^{(m)} \,.$ It follows from Lemma   \ref{Lem6.9z}\,$(ii)$ that property $(c)$ of Proposition \ref{Prop6.3}\,(${\mathfrak C}$) holds as $\, \P \big( \Sigma_{M (m)} \ge m \big) =1\,;$ this implies also property $(a)$  there, i.e., the failure of the WLLN. 

\smallskip
 We still have to show property $(b)$, i.e., $\,\lim_{N \to \infty}  T_N =0\,,$ \,in $\P-$probability. With $m \in \N$ and $N \in \big( N^{(m-1)}_{2^{m-1}}, N^{(m)}_{2^{m}} \big]$ there is at most one interval of the form $\, \big( \,N^{(m)}_j, N^{(m)}_{j+1} \, \big]\,$ with $\, j=1, \cdots, 2^m -1\,$ such that $\, N \in \big( \,N^{(m)}_j, N^{(m)}_{j+1} \, \big]\,.$ For such $N$ and $\ell \neq j\,,$ we have then $\, \tau_N \big( g^X_1 \big) < (1/m) = \eps\,$ on the event $\, \big\{  (j-1) \,2^{-m} < X \le  j \,2^{-m} \big\} \,$. This proves $\,\lim_{N \to \infty}  T_N =0\,,$ \,in $\P-$probability.    \qed

\section{The Proof  of Theorem  \ref{Thm3.2}}
\label{sec10}

The equivalence of the conditions\,\,{\bf (i)},\,{\bf  (i)$^\prime$} is shown exactly as   in the first paragraph of section \ref{sec9}, for the proof of Theorem  \ref{Thm6.1}. We need to prove the equivalence of conditions {\bf (i)} and {\bf (ii)}. 

\subsection{The Implication {\bf (i)}$\,\Rightarrow \,${\bf (ii)}}
\label{sec10a}

Suppose that there exists a sequence of ``correctors" $D_1, D_2, \cdots\,,$ measurable with respect to the tail $\sigma-$algebra ${\cal T}$ of \eqref{A6a}, such that the Weak Law of Large Numbers 
 \eqref{D2a} holds  for some subsequence $\,f_{k_1}, f_{k_2}, \cdots\,  $  and  all its subsequences.   Then, from Lemma   \ref{Lem4.1}, this  
 $\,f_{k_1}, f_{k_2}, \cdots\,  $ is bounded in $\Ll^0\,$; so we may assume  it is also determining, with associated exchangeable approximating sequence   $\, g_1, g_2, \cdots\,$ as in \eqref{A7}. On the strength of Proposition   \ref{Prop3.1}(i), the exchangeable   $\, g_1, g_2, \cdots\,$ satisfy  this Weak Law of Large Numbers  as well; in other words,  \eqref{6.10} holds. 
 
 Hence, we may apply Theorem \ref{Thm6.1} to obtain all the properties \eqref{6.12}--\eqref{6.16a} listed there,  for the ``statistics" \eqref{6.3}--\eqref{6.5} of the (random) conditional distribution ${\bm \mu}$ of $g_1,$ given the tail $\sigma-$algebra ${\cal T}_*$  of \eqref{6.1}. In particular, the conditions of \eqref{6.12} with $\eps =1$ lead now to those in \eqref{B00}.

\subsection{The Implication {\bf (ii)}$\,\Rightarrow \,${\bf (i)}}

We assume that the determining subsequence $\,f_{k_1}, f_{k_2}, \cdots,$ with limit random probability distribution  ${\bm \mu} $     as in   and below \eqref{B0}, satisfies the conditions in \eqref{B00}; in particular,   boundedness in $\Ll^0$.      We select  a sequence of natural numbers $\, J_N, ~N \in \N\,$   increasing to infinity so slowly, that 
  \begin{equation}
  \label{J1}
\lim_{N \to \infty} \bigg(  \frac{1}{\,N\,} \sum_{n=J_N}^N f_{k_n}   - D_N \bigg) =\,0 \quad \text{in} ~\P\text{\,--\,probability}
  \end{equation}
implies the desired \eqref{D2a}; and set out to prove  \eqref{J1}.

We summon now Theorem 2 of \cite{BerPet} to obtain,  for each $N \in \N\,$ and after passing to an appropriate subsequence, a partition $\, \big( A^{(N)}_i \big)_{i=0}^{r_N}\,$ of the space  $\Omega$ with $\,\P \big( A^{(N)}_0 \big) \le 2^{-N}\,$ and  such that, for each   remaining set  $\, A = A^{(N)}_i\,,~i=1, \cdots, r_N\,,$ there exists a sequence $\, h^{(A)}_j\,,~j \in \N\,$ of I.I.D.\,\,random variables with common distribution   $\mu^A$ and  the property 
\begin{equation}
  \label{J2}
\P^A \Big(\, \big| f_j - h^{(A)}_j \big| > 2^{-N} \,\Big) \le 2^{-N} \,, \qquad j=J_N, J_{N+1}, \cdots\,.
  \end{equation}
Here, we have renamed as $\big(f_j, ~j \in \N\big)$ the determining subsequence $f_{k_1}, f_{k_2}, \cdots$; and  denoted by $\P^A\,$ the conditional probability measure $\, \P^A (\cdot) = \P ( \cdot \,\cap A)\, / \,\P (A)\,,$   by $\mu^A$ the  limiting distribution    of the $f_1, f_2, \cdots$ on the set $A\,.$    The inequality (\ref{6.21}) (also on page 235 of \textsc{Feller},\,Vol.\,II \cite{F}) gives now 
\begin{equation}
  \label{J3}
\P^A \bigg(\, \Big| \, \frac{1}{\,N\,} \sum_{j={J_N}}^N h^{(A)}_j - \delta^{(A)}_N \, \Big|   > \eps \bigg) \le N \cdot \mu^A \big( \R \setminus [-N,N]\big)+ \frac{1}{N \eps^2} \int_{[-N,N]} x^2\, \mu^A (\ud x)
  \end{equation}
for the correctors
\begin{equation}
  \label{J4}
\delta^{(A)}_N\,:=\, \E^{\P^A} \Big( h^{(A)}_j \cdot \mathbf{1}_{ \{ |  h^{(A)}_j| \le N \}} \Big)\,, \quad N \in \N\,.
  \end{equation}
Whereas,  on account of \eqref{J3},\,\eqref{J2}, we deduce  for each atom $\, A = A^{(N)}_i\,,~i=1, \cdots, r_N\,$ the bound 
\begin{equation}
  \label{J5}
\P^A \bigg(\, \Big| \, \frac{1}{\,N\,} \sum_{j={J_N}}^N f_j - \delta^{(A)}_N \, \Big|   > \eps + 2^{-N} \bigg) \le N \cdot \mu^A \big( \R \setminus [-N,N]\big)+ \frac{1}{N \eps^2} \int_{[-N,N]} x^2\, \ud \mu^A (x)+  2^{-N}.
  \end{equation}
  
  We denote now by $\G_N$ the $\sigma-$algebra generated by the sets   $\,   A^{(N)}_i\,,~i=  1, \cdots, r_N\,,$ and observe that the construction in \cite{BerPet} can be made to guarantee the filtration structure $\,\G_1 \subseteq \G_2 \subseteq \cdots  \,;$ in  this manner,   \eqref{J5} becomes
\begin{equation}
  \label{J6}
\P \bigg(\, \Big| \, \frac{1}{\,N\,} \sum_{j={J_N}}^N f_j - D_N \, \Big|   > \eps + 2^{-N} \, \bigg|\, \G_N \bigg)~~~~~~~~~~~~~~~~~~~~~~~~~~~~~~~~~~~~~~~~~~~~~~~~~~~~~~~~
  \end{equation}
  $$
~~~~~~~~~~~~~~~  \le \, \E \Big( N \, {\bm \mu} \big( \R \setminus [-N,N], \cdot  \, \big) \, \Big|\, \G_N \Big)\,+\, \E \bigg( \frac{1}{N \eps^2} \int_{[-N,N]} x^2\, {\bm \mu} (\ud x, \cdot ) \, \bigg|\, \G_N \bigg) +    2^{-N}
  $$
        with   ``randomized correctors" 
  \begin{equation}
  \label{J7}
   D_N \, := \,\sum_{i=1}^{r_N} \,\delta^{(A^{(N)}_i)}_N \cdot \mathbf{1}_{A^{(N)}_i}\,
     \end{equation}
in the notation of     \eqref{J4}.  Whereas, passing in \eqref{J6} to a suitable subsequence of $\,f_1, f_2, \cdots$  and relabelling, this inequality remains valid when conditioning with respect to $\G_{N}$ in \eqref{J6} is replaced by  conditioning with respect to   $\,\G_{s_N}$, 
where $ \, \big( s_N \big)_{N \in \N}\,$ grows as rapidly as desired. 
    
Finally, we argue that 
  $$
  \P \bigg(\, \Big| \, \frac{1}{\,N\,} \sum_{j={J_N}}^N f_j - D_N \, \Big|   > \eps + 2^{-N} \, \bigg|\, \G_{s_N} \bigg)
   \le \,   N \, {\bm \mu} \big( \R \setminus [-N,N], \cdot  \, \big)  \,+\,   \frac{1}{N \eps^2} \int_{[-N,N]} x^2\, {\bm \mu} (\ud x, \cdot )   +   R_N\,
  $$
holds, where $\, \lim_{N \to \infty} R_N=0\,$ in probability. Indeed, recalling the assumption $(ii)$ and   \eqref{J4},   \eqref{J7},   invoking the martingale convergence theorem, and always assuming that  $ \, \big( s_N \big)_{N \in \N}\,$ grows  sufficiently  rapidly, we deduce    that the left-hand side of the above display, as well as the difference $\, D_N - \int_{[-N,N]} x \, {\bm \mu} (\ud x, \cdot )\,$, converge in probability to zero as $N \to \infty\,.$ 
 
 \smallskip
 Integration leads now to the desired conclusion \eqref{J1}. \qed

 \medskip
  

   \end{document}